\definecolor{myred}{rgb}{0.75,0,0}
\definecolor{mygreen}{rgb}{0,0.5,0}
\definecolor{myblue}{rgb}{0,0,0.65}
\newcommand{\nc}{\newcommand} \newcommand{\renc}{\renewcommand}
    \def\CM{{\mathbb{C}}}
    \def\DM{{\mathbb{D}}}
  \def\hg{{\mathfrak h}}
  \def\mg{{\mathfrak m}}
    \def\PM{{\mathbb{P}}}
    \def\QM{{\mathbb{Q}}}
    \def\ZM{{\mathbb{Z}}}
    \def\OC{{\mathcal{O}}}
    \def\RC{{\mathcal{R}}}
\def\FS{{\EuScript F}}
\def\e{\varepsilon}
\def\XSS{{\mathscr{X}}}
\nc{\todo}[1]{ {\color{red}XXX #1 XXX}}
\def\to{\rightarrow}
\def\longto{\longrightarrow}
\def\onto{\twoheadrightarrow}
\nc{\triright}{\stackrel{[1]}{\to}}
\nc{\longtriright}{\stackrel{[1]}{\longto}}
\nc{\Hb}{H^\bullet}
\nc{\Br}{\mathcal{B}}
\nc{\HotRR}{{}_R\mathcal{K}_R}
\nc{\HotR}{\mathcal{K}_R}
\nc{\excise}[1]{}
\nc{\defect}{\text{df}}
\nc{\h}[1]{\underline{H}_{#1}}
\nc{\Ga}{\mathbb{G}_a} 
\nc{\Gm}{\mathbb{G}_m} 
\nc{\Perv}{{\mathbf{P}}}
\nc{\IH}{{\mathrm{IH}}}
\nc{\ic}{\mathbf{IC}}
\nc{\gl}{{\mathfrak{gl}}}
\renc{\sl}{{\mathfrak{sl}}}
\renc{\sp}{{\mathfrak{sp}}}
\renc{\Im}{\textrm{Im}}
\nc{\HBM}{H^{BM}}
 \DeclareMathOperator{\Hom}{Hom}
\DeclareMathOperator{\id}{id}
\DeclareMathOperator{\rank}{rank}
\DeclareMathOperator{\GL}{GL}
\nc{\St}{\mathrm{St}}
\nc{\rot}{\mathrm{rot}}
\nc{\ext}{\mathrm{ext}}
\nc{\Tilt}{\mathrm{Tilt}}
\nc{\gen}{\mathrm{gen}}
\nc{\Graph}{\mathrm{Graph}}
\newcommand{\into}{\hookrightarrow}
\nc{\simto}{\stackrel{\sim}{\to}}
\nc{\simfrom}{\stackrel{\sim}{\leftarrow}}
\nc{\gbmod}{\mathrm{-gmod-}}
\nc{\gmod}{\mathrm{-gmod}}
\nc{\Parity}{\mathrm{Parity}}
\nc{\mult}{\mathrm{mult}}
\nc{\Hecke}{\textrm{H}}
\nc{\geom}{\mathrm{geom}}
\nc{\decr}{\mathrm{decr}}
\nc{\ind}{\mathrm{ind}}
\nc{\hyp}{\mathrm{hyp}}
\newcommand{\Poly}{\mathrm{Poly}}
\newcommand{\Plucker}{\mathrm{\text{Pl\"ucker}}}
\nc{\source}{\mathrm{source}}
\nc{\expl}{\mathrm{expl}}
\nc{\Soe}{\mathrm{Soe}}
\nc{\Abe}{\mathrm{Abe}}
\nc{\diag}{\mathrm{diag}}
\nc{\fin}{\textrm{finite}}
\nc{\reflect}{\RC}
\nc{\Chi}{\XSS}
\nc{\pt}{\mathrm{pt}}
\nc{\odd}{\textrm{odd}}
\nc{\even}{\textrm{even}}
\nc{\weights}{\textrm{weights}}
\newtheorem{thm}{Theorem}[section]
\newtheorem{lem}[thm]{Lemma}
\newtheorem{prop}[thm]{Proposition}
\newtheorem{cor}[thm]{Corollary}
\newtheorem{conj}[thm]{Conjecture}
\theoremstyle{definition}
\newtheorem{ex}[thm]{Example}
\theoremstyle{remark}
\newtheorem{remark}[thm]{Remark}
\newcommand{\KL}{P}
\newcommand{\qKL}{\partial P}
\title[]{Towards combinatorial invariance for Kazhdan-Lusztig polynomials}
\author[]{Charles Blundell}
\address{DeepMind, London, UK}
\email{cblundell@google.com}
\author[]{Lars Buesing}
\address{DeepMind, London, UK}
\email{lbuesing@google.com}
\author[]{Alex Davies}
\address{DeepMind, London, UK}
\email{adavies@google.com}
\author[]{Petar Veli\v{c}kovi\'c}
\address{DeepMind, London, UK}
\email{petarv@google.com}
\author[]{Geordie Williamson}
\address{University of Sydney, Sydney, Australia.}
\email{g.williamson@sydney.edu.au}
\begin{document}

\begin{abstract}
  Kazhdan-Lusztig polynomials are important and mysterious objects in
representation theory. Here we present a new formula for their
computation for symmetric groups based on the Bruhat graph. Our
approach suggests a solution to the combinatorial invariance
conjecture for symmetric groups, a well-known conjecture formulated by Lusztig and Dyer in
the 1980s.
\end{abstract}

\maketitle


\section{Introduction}

Kazhdan-Lusztig polynomials are important polynomials associated to
pairs of elements $x,y$ in Coxeter groups. They appear throughout
representation theory and related fields. Lusztig (ca. 1983) and
Dyer (1987) independently conjectured that Kazhdan-Lusztig polynomials depend
only on the poset of elements between $x$ and $y$ in Bruhat
order. This is a fascinating conjecture. For example, it suggests that
Kazhdan-Lusztig polynomials are providing subtle
invariants of  Bruhat order.
This conjecture is known in several special cases
(\cite{DyerThesis,BrentiIH,Incitti,BCM,PatimoKL,BLN}, see
\cite{BrentiHistory} for an overview).
 
In this work we prove a new formula 
Kazhdan-Lusztig polynomials for symmetric groups. Our formula was
discovered whilst trying to understand certain
machine learning models trained to predict Kazhdan-Lusztig polynomials
from Bruhat graphs (see \cite{Nature}). The new formula suggests an
approach to the combinatorial invariance
conjecture for symmetric groups.

Traditionally, Kazhdan-Lusztig polynomials are computed
inductively based on the length. This means that in order to compute
$P_{x,y}$ one might need to know $P_{u,v}$ where $u, v$ lies
anywhere in $[\id, x]$. Our new formula allows
inductive calculation using only polynomials $P_{u,v}$ where $u, v$
belong to the Bruhat interval $[x,y]$---thus it ``stays in the Bruhat
interval''. Roughly speaking, we compute Kazhdan-Lusztig
polynomials via induction over the \emph{rank} of the symmetric group, rather
than the \emph{length} of a permutation.

As our new formula uses only information present in the Bruhat
interval $[x,y]$ and thus might be useful for
approaching the combinatorial invariance conjecture. However, the
formula requires the knowledge of the intersection of $[x,y]$ with 
a coset $S_{n-1}x$, which is not combinatorially invariant
information. It is not difficult to see that the intersection
\[
[x,y] \cap S_{n-1}x
\]
forms a sub-interval $[x,c] \subset [x,y]$. We observe that the embedding $[x,c] \subset [x,y]$ has
remarkable properties: the edges connecting any node $u \in
[x,c]$ to a node $v \notin [x,c]$ possess a lattice-like quality which we
call a \emph{hypercube cluster}.

Abstracting the notion of hypercube cluster leads to the general
notion of \emph{hypercube decomposition} of which our sub-interval $[x,c]
\subset [x,y]$ is an example. Hypercube decompositions appear to
provide a nice general way of understanding Bruhat intervals in
symmetric groups. We expect them to play a role in the solutions of
other problems.  In general, there may be many more
hypercube decompositions than come from intersections with cosets of
$S_{n-1}$. Remarkably, our formula appears to give the right answer for
any hypercube decomposition. We cojecture that this is always the
case. We have checked our conjecture on all Bruhat intervals up to $S_9$ (over a
million Bruhat intervals, and many more hypercube
decompositions!). Our conjecture implies the combinatorial invariance
conjecture in more precise form.

\subsection{Structure of this paper} We begin in \S\ref{sec:KL} with
an overview of Kazhdan-Lusztig polynomials and the combinatorial
invariance conjecture. This is introductory, and can be skipped over
by an experienced reader.
In
\S\ref{sec:formula} we introduce hypercube clusters, and state our 
formula and conjecture. This section is purely combinatorial. In
\S\ref{sec:proof} and \S\ref{sec:hypercube combinatorics} we prove our formula; here geometric machinery
(Schubert varieties, intersection cohomology, torus actions, weights \dots)
is needed.  In \S\ref{sec:conjecture} we outline a route to a proof
of our conjecture in general, and explain that our conjecture would
follow from the purity of a certain cohomology group, or the
surjectivity of restriction map map in cohomology.

\subsection{Acknowledgements} We are grateful to Erez Lapid,
Matthew Dyer, Peter Fiebig, Leonardo Patimo and Joel Kamnitzer for useful 
discussions and/or correspondence. We are particularly grateful to
Anthony Henderson for patiently listening to our incomplete explanations, and
numerous helpful suggestions.


\section{Kazhdan-Lusztig polynomials and combinatorial invariance} \label{sec:KL}

In this background section, we give some background on Coxeter groups,
Bruhat graphs, Kazhdan-Lusztig polynomials and the combinatorial
invariance conjecture. Excellent references for the following include
\cite{Humphreys, BrentiHistory,BjBr,SoeKL,soergelbook}.

\subsection{Coxeter groups and Kazhdan-Lusztig polynomials} 
\emph{Coxeter groups} are an important class of groups, which arose out of H.~S.~M.~
Coxeter's study of finite reflection groups in the 1930s. They are
characterised by a presentation via generators and relations. In this
presentation the generating set are called the \emph{simple
reflections}. An important example of a Coxeter group is the symmetric group $S_{n+1}$
consisting of all permutations of $0, 1, \dots, n$, with simple
reflections consisting of the set $S = \{ (i, i+1) \}$  of
adjacent transpositions.

In a seminal paper \cite{KL}, Kazhdan and Lusztig associated to any
pair of elements $x, y$ in a Coxeter group a polynomial with integer coefficients
\[
x,y \in W \mapsto \KL_{x,y} \in \ZM[q]
\]
known as the \emph{Kazhdan-Lusztig polynomial}. All that we say here
concerning their definition is that it is highly inductive; one works
one's way ``out'' in the group, starting at the identity and applying
generators from $S$. At each step in the calculation one might need
any of the previously computed polynomials. Thus, they are rather
cumbersome to calculate by hand, but it is not difficult to compute
billions of them on a computer with enough memory. (This is useful for
machine learning, as one often needs access to large data
sets.)\footnote{The reader who wants to get a feeling for
  Kazhdan-Lusztig polynomials is encouraged to experiment with Joel
  Gibson's LieVis software. For example, Kazhdan-Lusztig
  for an affine Weyl group of type $A_2$ are computed live 
  \href{https://www.jgibson.id.au/lievis/affine_weyl/\#(controls:!f,fullscreen:!t,labels:klpoly)}{here}.}

\subsection{The Bruhat graph}
To any Coxeter group one may associate its \emph{Bruhat graph}. For
the symmetric group, this is the graph with vertices corresponding to
all elements of $S_n$, and an edge joining $x$ and $y$ if and only if
they differ by multiplication by a transposition. (In other words, $x$
and $y$ are connected in the Bruhat graph if they agree on all but two
elements of $0, 1, \dots, n$.) Below, we will
denote the transposition that exchanges $i$ and $j$ by
$t_{(i,j)}$. The transpositions are precisely the conjugates of the
simple reflections in $S_n$.

The symmetric group has a natural
\emph{length function} given by the number of inversions:
\[
\ell(x) = \# \{ i < j \; | \; x(i) > x(j) \}.
 \]
We regard the length function as giving us a notion
of ``height'' on the Bruhat graph. This allows us to orient the edges of the Bruhat graph via decreasing length.
Figures \ref{fig:BruhatGraph23} and
\ref{fig:BruhatGraph4} give pictures of the Bruhat graph for $n =
2,3$ and $4$.\footnote{Throughout this paper we use \emph{string
    notation} for permutations. Thus $(2,0,3,1)$ (or often simply
  $2031$) denotes the permutation of $0, 1, 2$ and $3$ that sends $0 \mapsto 2$, $1\mapsto
  0$, $2\mapsto 3$ and $3 \mapsto  1$.}
 

\begin{figure}
\[\begin{array}{c}
    \includegraphics[scale=0.5]{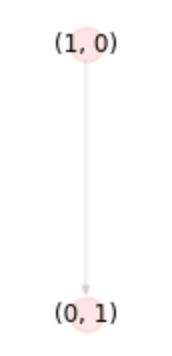}
\end{array} 
\qquad \text{and} \qquad
  \begin{array}{c}
    \includegraphics[scale=0.5]{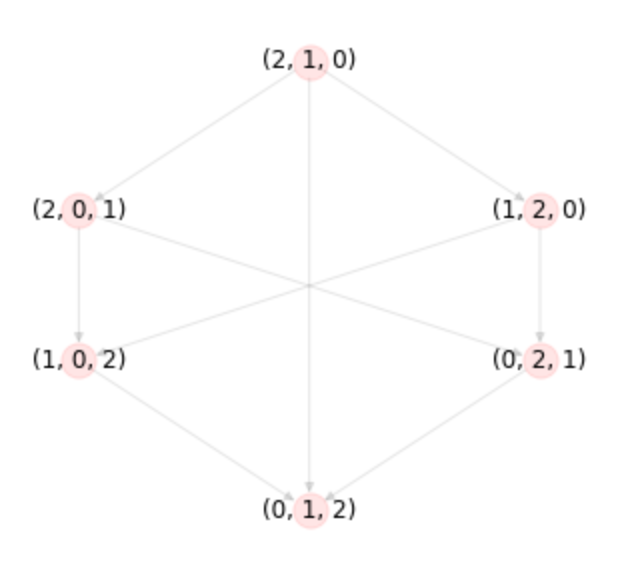}
   \end{array} \]
  \caption{Ordered Bruhat graphs for $n = 2, 3$}
  \label{fig:BruhatGraph23}
\end{figure}

\begin{figure}
    \includegraphics[scale=0.4]{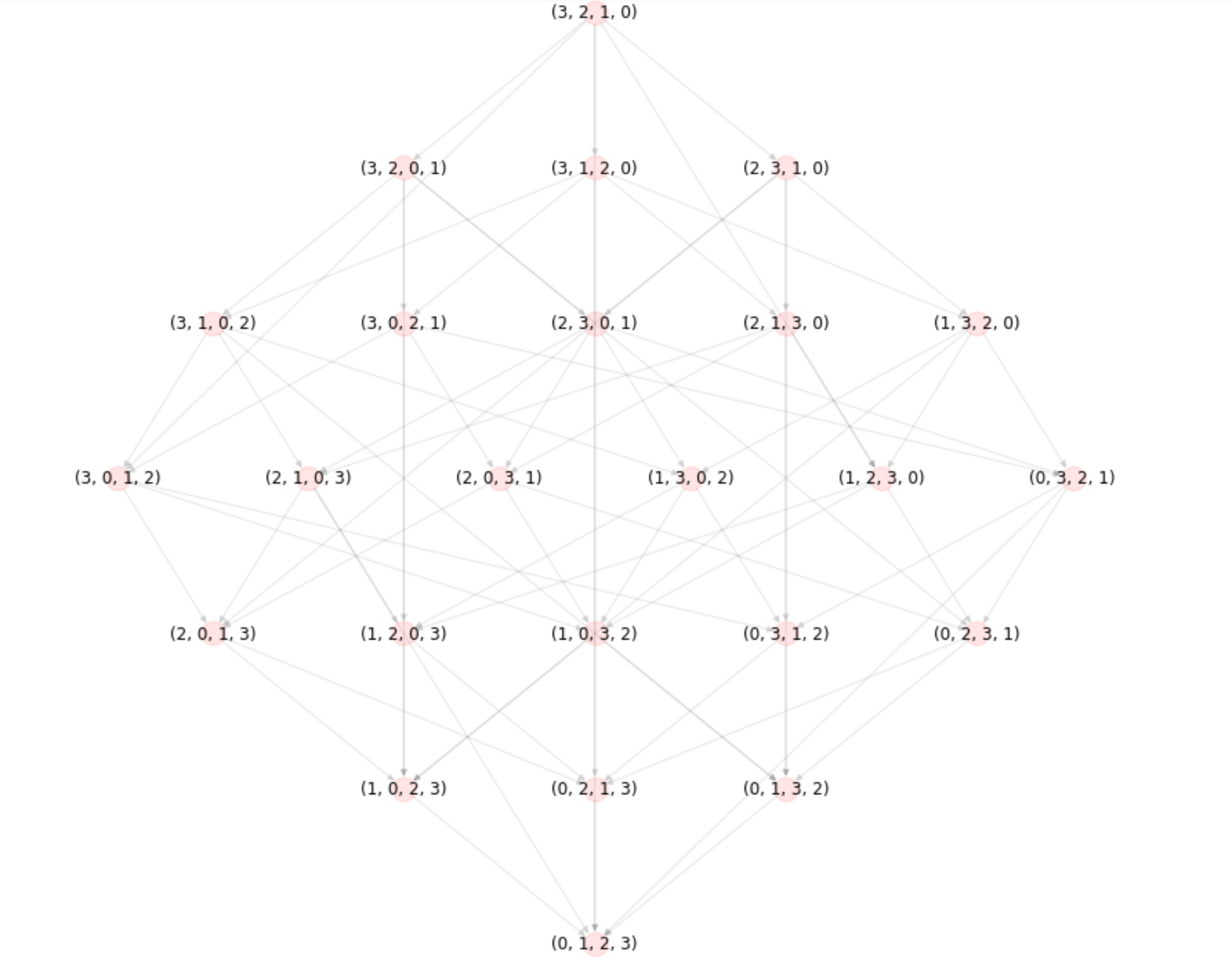}
  \caption{Ordered Bruhat graph for $n = 4$}
  \label{fig:BruhatGraph4}
\end{figure}


\subsection{Bruhat order}
The Bruhat graph allows us to define the \emph{Bruhat
  order}, which is a partial order on $W$. It is defined as follows:
\begin{equation}
  \label{eq:1}
x \le y \quad \Leftrightarrow \quad \begin{array}{c} \text{there exists a downward
  path}\\
\text{from $y$ to $x$ in the
Bruhat graph.} \end{array}
\end{equation}
(We include paths of length zero, so $x \le x$ always holds.)
For example, in the symmetric group $S_{n+1}$ the minimal element
is always the identity permutation, and the maximal element is the
permutation $w_0$ which interchanges $0$ and $n$, $1$ and $n-1$ etc.

The Bruhat order is remarkably complex, and it has long been suspected
that Kazhdan-Lusztig polynomials reflect subtle properties of the
Bruhat order. An elementary manifestation of this phenomenon (easy to prove) is that:
\[
\KL_{x,y} \ne 0 \Leftrightarrow x \le y.
\]
Less elementary connections tend to involve the interval $[x,y]$ consisting
of the full subgraph of the Bruhat graph between $x$ and $y$. (That
is, this consists of all edges and vertices which may be reached from
$y$ on the way to $x$, whilst progressing downwards.) For example, a
much less obvious fact \cite{Carrell, DyerDeodhar} is that
\[
  \KL_{x,y} = 1 \Leftrightarrow
  \begin{array}{c}\text{the graph obtained from $[x,y]$} \\
    \text{by forgetting edge orientations is regular} \\
    \text{(i.e. all vertices have the same degree).}
  \end{array}
\]
(It is easy to see that the full Bruhat graph is regular, and in
particular $\KL_{\id, w_0} = 1$.)

\subsection{First examples} 
In $S_3$ all proper intervals are isomorphic to the
following posets\footnote{poset = partially ordered set} (as the reader may check easily, using Figure 
\ref{fig:BruhatGraph23}):
\begin{gather*}
\begin{array}{c}
 \includegraphics[scale=0.2]{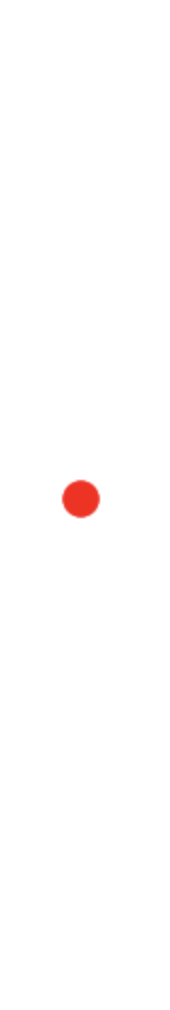}
\end{array}
\quad
\begin{array}{c}
 \includegraphics[scale=0.2]{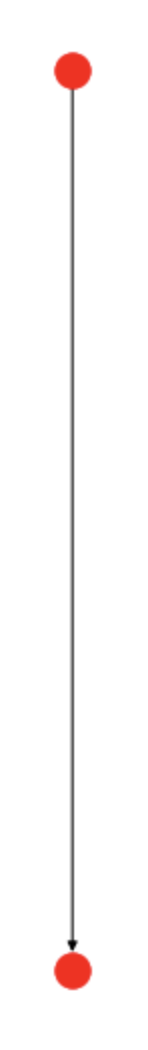}
\end{array}
\quad
\begin{array}{c}
 \includegraphics[scale=0.2]{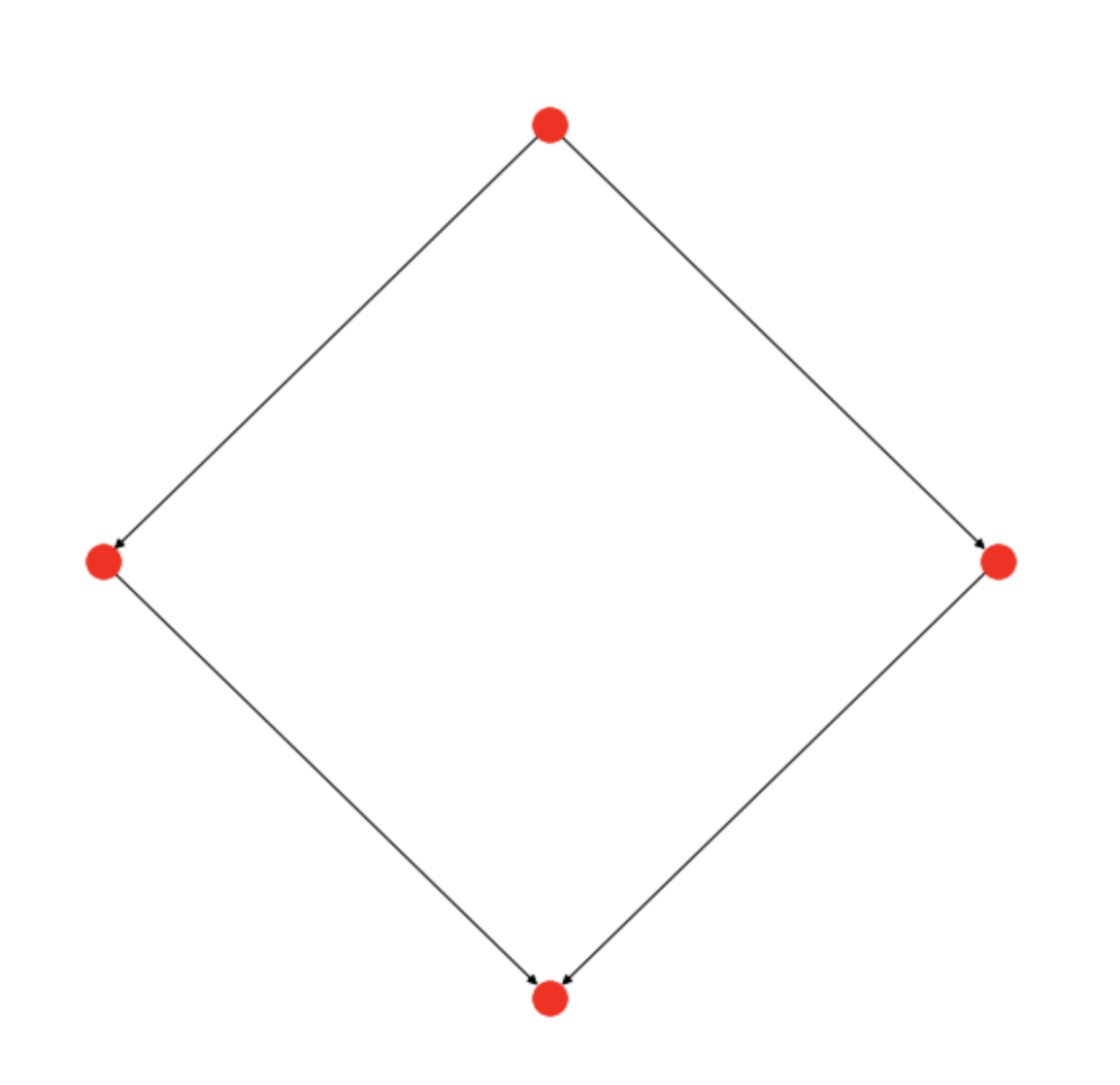}
\end{array}
\begin{array}{c}
 \includegraphics[scale=0.2]{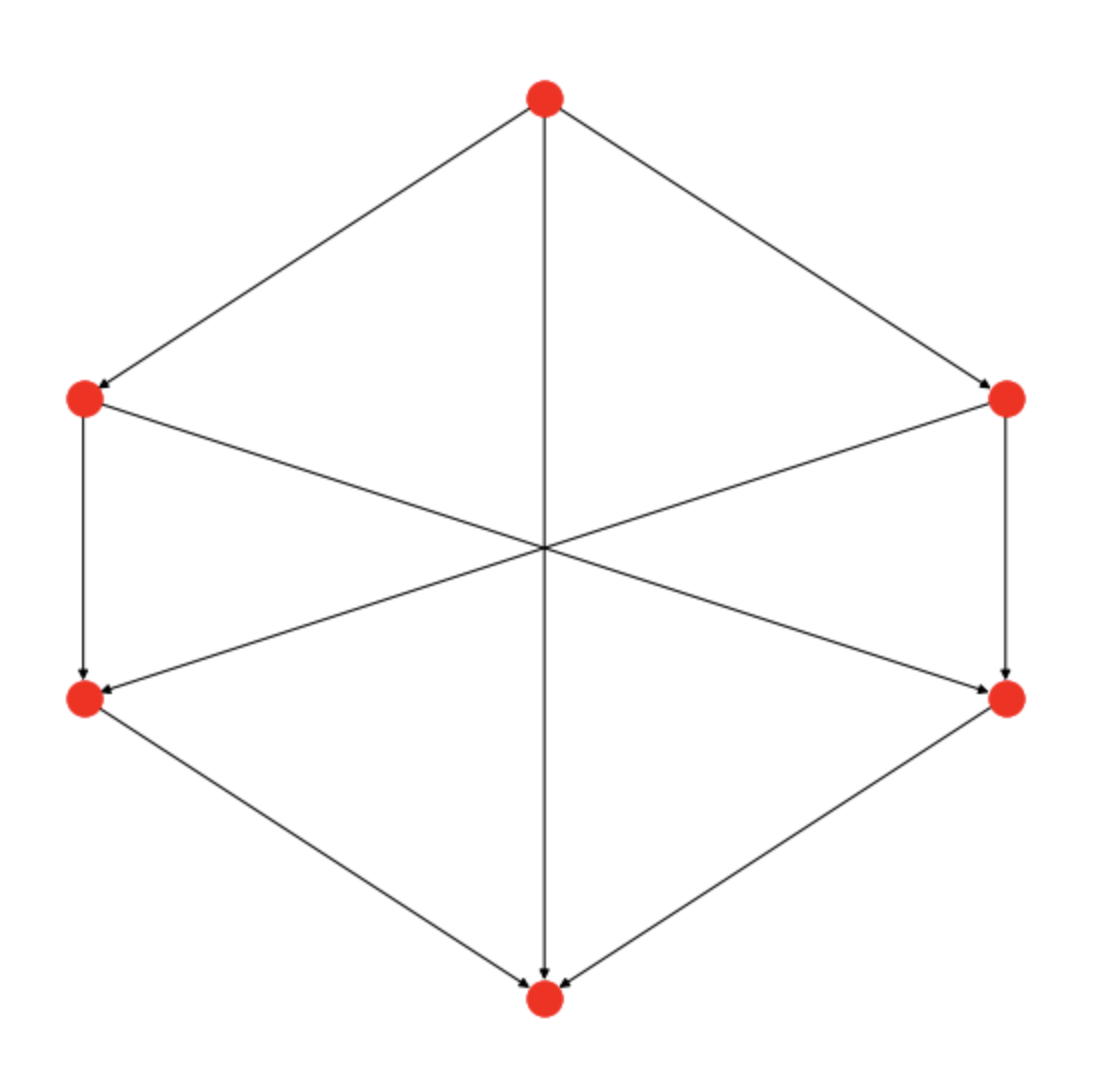}
\end{array}
\end{gather*}
All these graphs are regular (after forgetting edge orientations), and thus all
Kazhdan-Lusztig polynomials are $1$.

In $S_4$, almost all intervals $[x,y]$ are regular, and hence almost
all 
Kazhdan-Lusztig polynomials $\KL_{x,y}$ are 1. There are
four intervals which are not regular. Here we depict two intervals which are not: those
between $0213$ and $2301$, and between $1032$ and $3120$:
\begin{gather*}
\begin{array}{c}
 \includegraphics[scale=0.25]{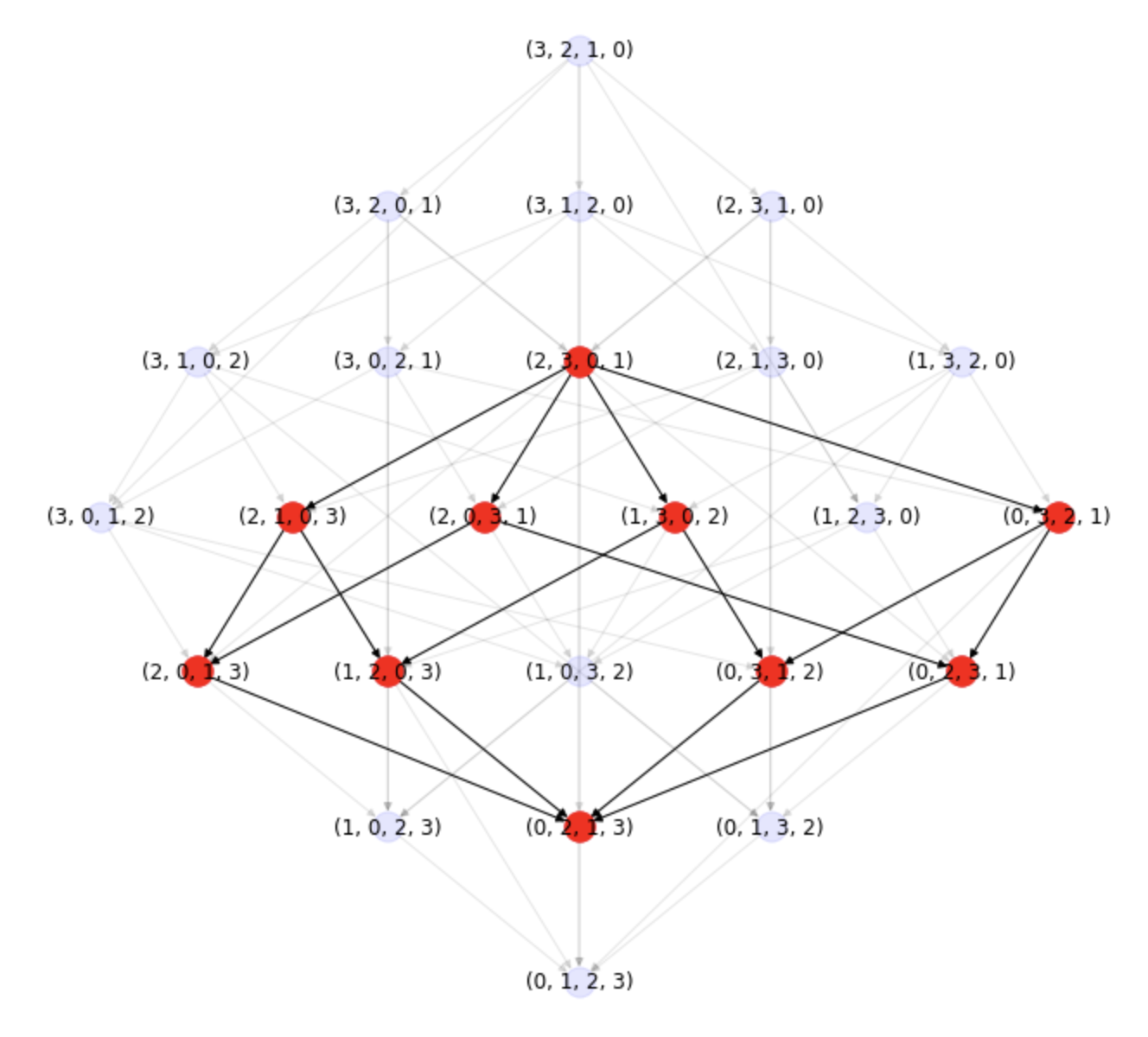}
\end{array}
\quad
\begin{array}{c}
 \includegraphics[scale=0.25]{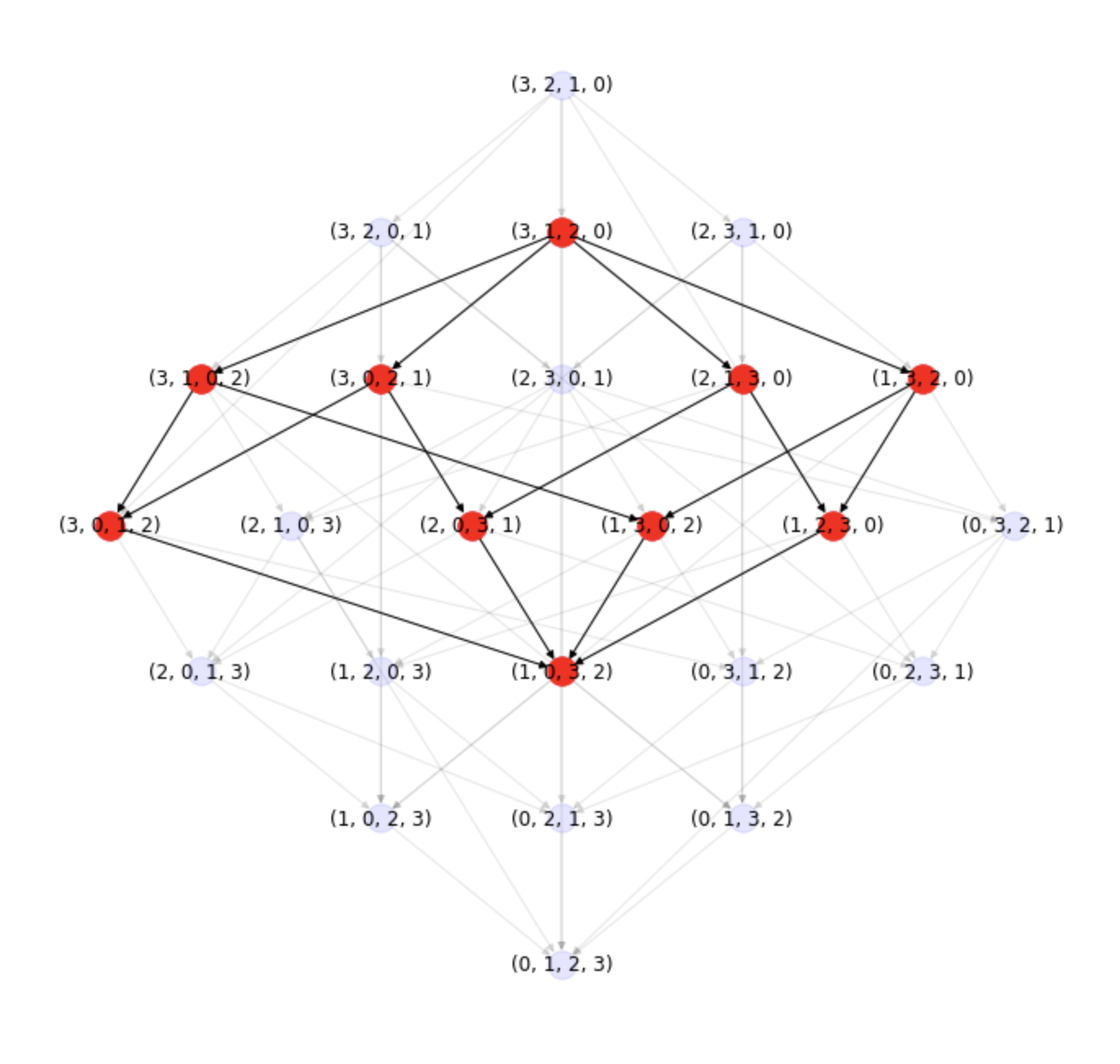}
\end{array}
\end{gather*}
In both cases, the interval is isomorphic to the following directed
graph, known as the ``4 crown'':
\begin{gather} \label{eq:4crown}
\begin{array}{c}
 \includegraphics[scale=0.2]{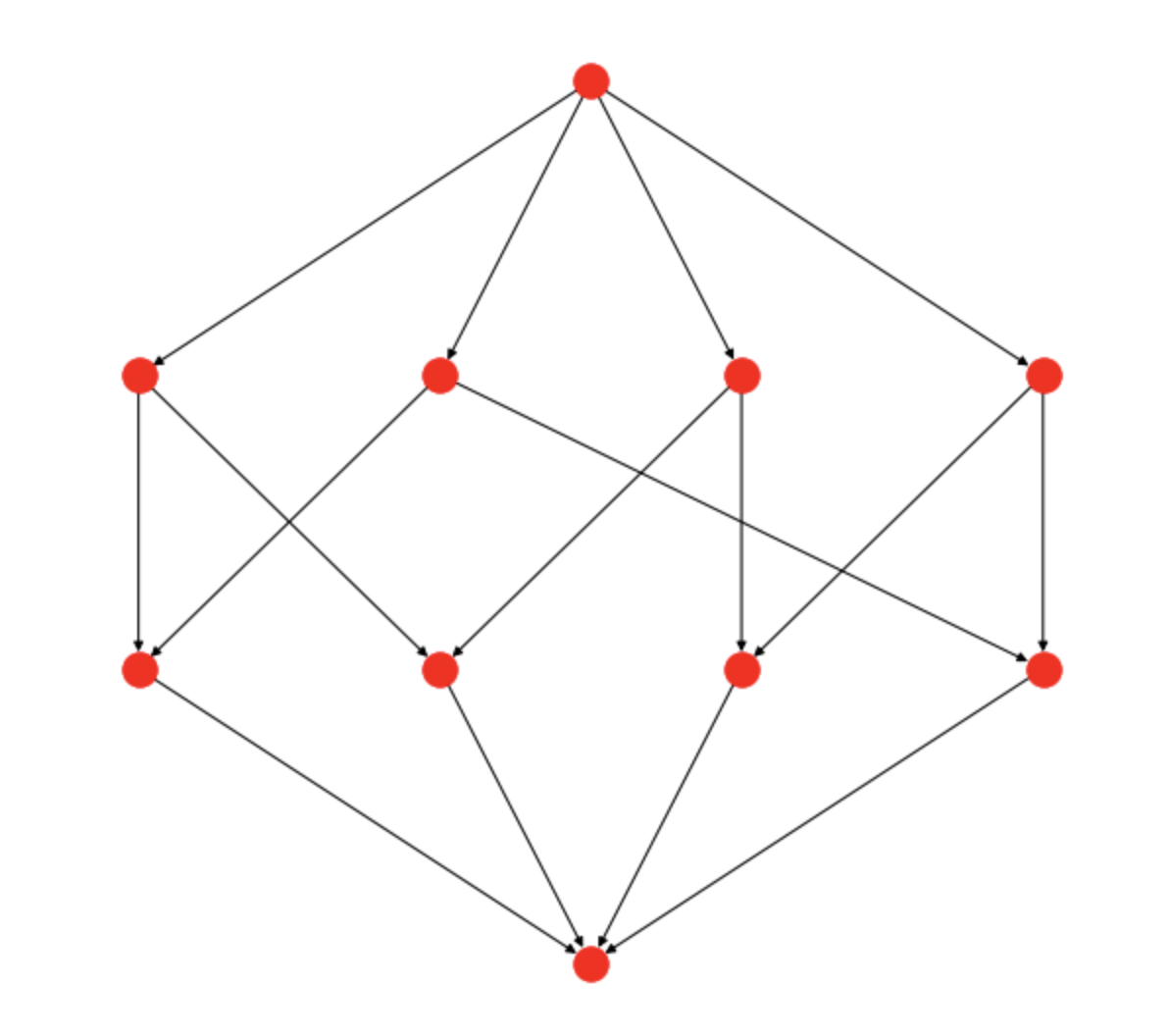}
\end{array}
\end{gather}
In both these cases the Kazhdan-Lusztig polynomials are equal:
\begin{equation}
  \label{eq:S4KL}
\KL_{0213,2301} = \KL_{1032,3120} = 1 + q.  
\end{equation}

\subsection{The combinatorial invariance conjecture} The following
conjecture, formulated independently by Lusztig and Dyer in
the 1980s, was a major motivation for the current work:

\begin{conj}
  The Kazhdan-Lusztig polynomial $\KL_{x,y}$ depends only on the
  isomorphism type of Bruhat graph of the interval $[x,y]$.
\end{conj}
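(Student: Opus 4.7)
The plan is to exploit the formula proved in \S\ref{sec:formula}--\S\ref{sec:hypercube combinatorics}, which computes $P_{x,y}$ using only the polynomials $P_{u,v}$ with $u,v \in [x,y]$ together with combinatorial data read off from a hypercube decomposition. Because this formula ``stays in the Bruhat interval,'' it is perfectly adapted to an induction, provided two things can be arranged: (i) a hypercube decomposition can be extracted from the abstract (oriented) Bruhat graph of $[x,y]$ alone, and (ii) the value returned by the formula is independent of the choice of such decomposition. These are precisely the points flagged by the authors in \S\ref{sec:conjecture}.

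First I would set up an induction on $\ell(y)-\ell(x)$. The base cases, including all regular intervals, are trivial by the Carrell--Peterson-type result recalled above ($P_{x,y}=1$). For the inductive step, assume combinatorial invariance has been established for every interval strictly shorter than $[x,y]$. Given a second pair $(x',y')$ in some (possibly different) symmetric group with an isomorphism $\varphi\colon [x,y]\simto [x',y']$ of Bruhat graphs preserving orientation, the goal is to show $P_{x,y}=P_{x',y'}$.

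The next step is to promote $\varphi$ to a bijection between hypercube decompositions. For this I would verify that the definition of a hypercube cluster, and hence of a hypercube decomposition, is expressible purely in terms of the oriented Bruhat graph (vertices, covering relations, and the local ``lattice-like'' structure linking $[x,c]$ to its complement). Once confirmed, any hypercube decomposition of $[x,y]$ is transported by $\varphi$ to a hypercube decomposition of $[x',y']$. Choosing the canonical decomposition arising from $[x,y]\cap S_{n-1}x$ on the left and its image under $\varphi$ on the right, the formula expresses both $P_{x,y}$ and $P_{x',y'}$ as the same explicit polynomial expression in smaller Kazhdan--Lusztig polynomials indexed by matched pairs under $\varphi$. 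By the inductive hypothesis these smaller polynomials coincide, and so $P_{x,y}=P_{x',y'}$ would follow.

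The principal obstacle---and the reason the paper states this as a conjecture rather than a theorem---is point (ii): the image of the canonical decomposition under $\varphi$ need not itself come from intersecting with a coset of some $S_{n-1}$, since $(x',y')$ may live in a different rank. One therefore needs the formula to be correct for every abstract hypercube decomposition, not merely for the geometric ones arising from parabolic cosets. Following the outline of \S\ref{sec:conjecture}, I would reduce this independence to a purity, or equivalently a surjectivity, statement for a restriction map between equivariant intersection cohomology groups of the Schubert variety and a subvariety built from the hypercube data; controlling the weight filtration on this map is the geometric heart of the problem. All other ingredients---the inductive framework and the combinatorial transport of hypercube decompositions---are then formal consequences of the machinery already developed in this paper.
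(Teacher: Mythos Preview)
The statement you are addressing is a \emph{conjecture} in the paper, not a theorem; the paper gives no proof of it. Your proposal is not a proof either, and you are candid about this: you correctly locate the single genuine gap, namely that the formula of Theorem~\ref{thm:main} is only established for the specific hypercube decomposition $L = [x,y]\cap S_{n}x$, whereas the inductive transport argument you sketch requires it to hold for the $\varphi$-image of $L$, which is an \emph{arbitrary} hypercube decomposition of $[x',y']$. This is exactly Conjecture~\ref{conj:main}, and your reduction of it to a purity/surjectivity statement in equivariant intersection cohomology matches the paper's own discussion in \S\ref{sec:obstacle}.

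Your outline of the surrounding logic (induction on $\ell(y)-\ell(x)$, the observation that the notion of hypercube decomposition is intrinsic to the oriented Bruhat graph and hence transported by $\varphi$, and the consequent matching of all inductively known $P_{u,v}$) is precisely the mechanism by which the paper asserts Conjecture~\ref{conj:main} implies combinatorial invariance. So there is no divergence in approach to report: you have recovered the paper's own conditional strategy and correctly identified that the missing ingredient---independence of $I_{x,y,J}+Q_{x,y,J}$ from the choice of $J$, equivalently the purity of $H^\bullet_{T',c}(X\setminus Z, j^!\ic)$ or the surjectivity of $\beta$ in diagram~\eqref{eq:big diagram}---remains open.
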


For example, given only the ordered graph \eqref{eq:4crown} (and
not the labelling of its vertices) 
we should be able to predict the Kazhdan-Lusztig polynomial $1+
q$. The fact that \eqref{eq:4crown} occurs in two different ways in the 
Bruhat graph of $S_4$ with equal Kazhdan-Lusztig polynomials, can be seen as an instance of this conjecture. Figures \ref{fig:KL101} and  \ref{fig:KL131}  shows two more
examples of the assignment of  Kazhdan-Lusztig polynomial to Bruhat
intervals.

\begin{remark}
  The combinatorial invariance conjecture is a central conjecture in
  the study of Bruhat intervals. The reader is referred to
  \cite{BrentiHistory} for more detail on known cases (see also \cite{DyerThesis,BrentiIH,Incitti,PatimoKL,BLN}). We do not
  discuss the various partial results towards the conjecture here, except to mention that it
  is known to hold for intervals starting at the identity \cite{BCM}.
\end{remark}

\begin{figure}
\[
\begin{array}{c}
    \includegraphics[scale=0.3]{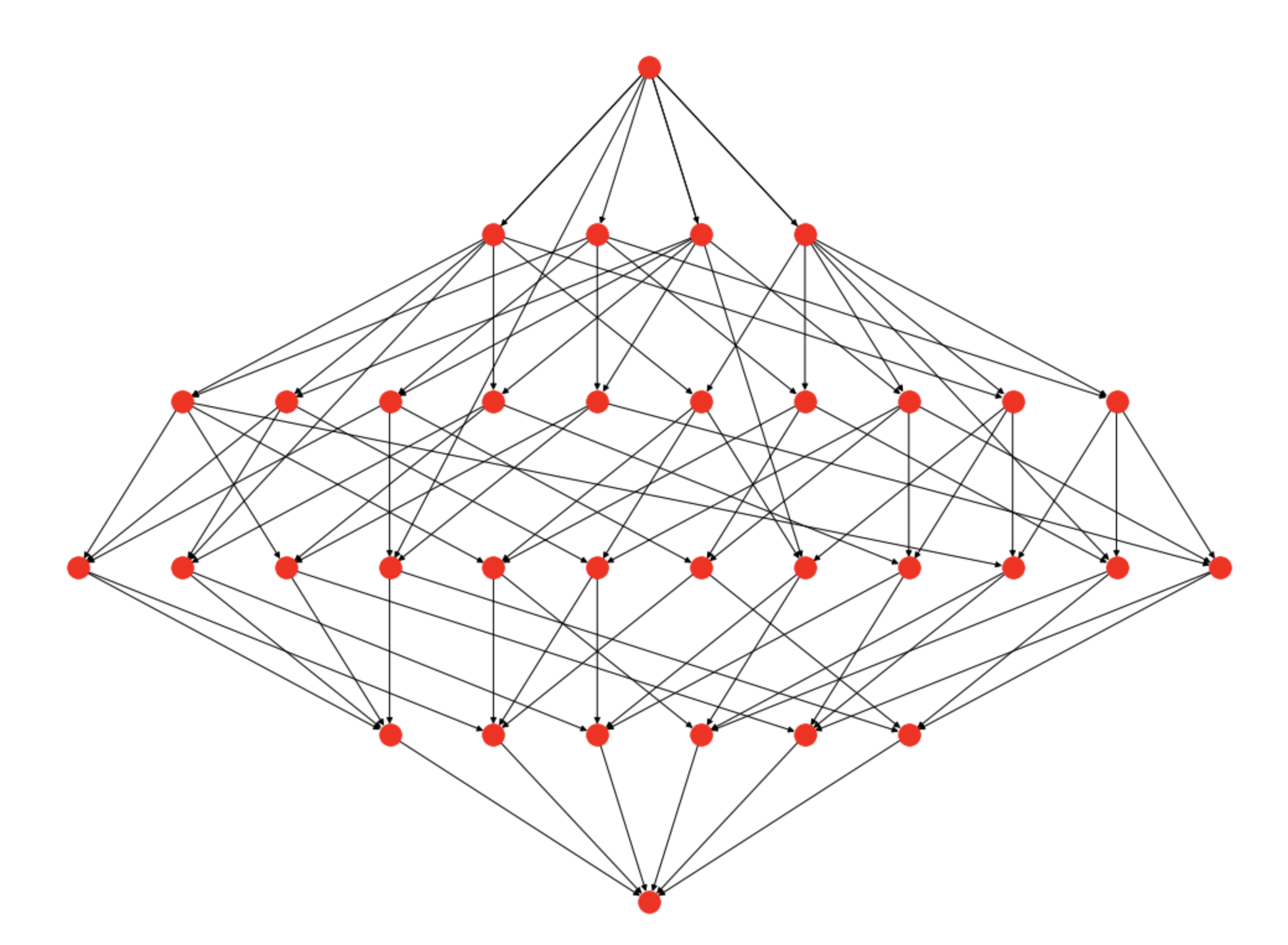}
\end{array}
\mapsto 1 + q^2
\]
  \caption{Interval and Kazhdan-Lusztig polynomial for $x = 03214$ and $34201$}
  \label{fig:KL101}
\end{figure}

\begin{figure}
\[
\begin{array}{c}
    \includegraphics[scale=0.3]{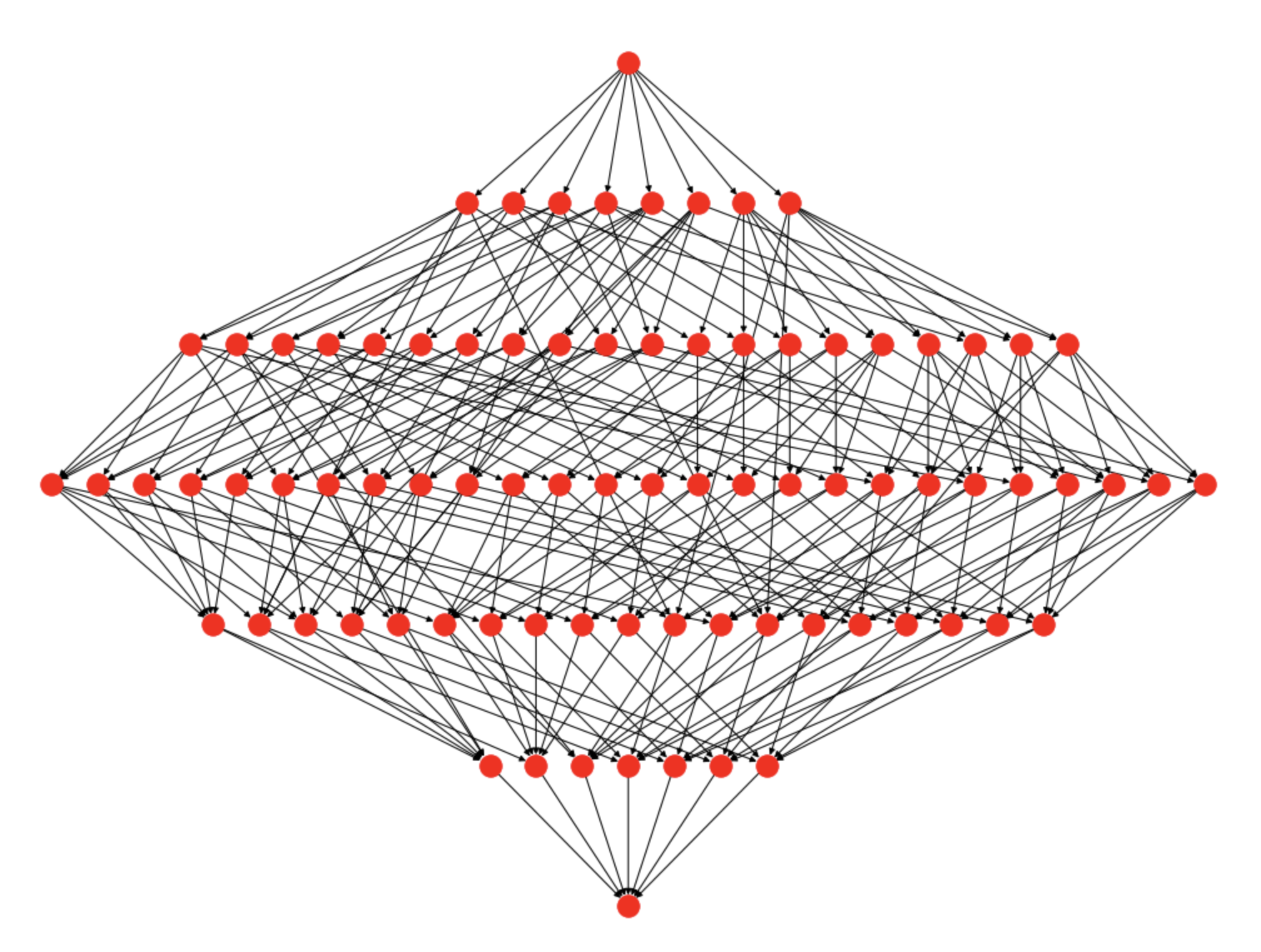}
\end{array}
\mapsto 1 + 3q + q^2
\]
  \caption{Interval and Kazhdan-Lusztig polynomial for $x = 021435$ and $y=234501$}
  \label{fig:KL131}
\end{figure}



\section{The new formula} \label{sec:formula}

In this section we describe our new formula. 
Before going into detail, let us give a rough idea of what the formula
looks like. Recall that our goal is to
compute the Kazhdan-Lusztig polynomial starting from the Bruhat
graph. By induction, we can assume that we can do this for any smaller
graph. In particular, we can assume that all intermediate polynomials
$\KL_{u,v}$ are known, for all $u,v \in [x,y]$ with $(u,v) \ne (x,y)$.

Our formula depends on the choice of an auxiliary structure on our
graph, called a \emph{hypercube decomposition}. Such a decomposition
amounts to the choice of a subinterval $J\subset [x,y]$ satisfying certain concrete
combinatorial conditions.  (The reader is
encouraged to skip ahead a few pages to Figure \ref{fig:hypercubes5},
where a typical hypercube decomposition is illustrated.) There always
exists at least one hypercube decomposition, but in general there will
be many. Any choice of hypercube decomposition determines two polynomials in $q$, the  
\emph{inductive piece} and \emph{hypercube piece}. Our formula is:
\begin{equation}
  \qKL_{x,y} = \text{\emph{inductive piece}} +\text{\emph{hypercube piece}}.
\end{equation}
The left
hand side is the $q$-derivative of the Kazhdan-Lusztig polynomial,
from which the Kazhdan-Lusztig polynomial can be recovered.
The calculation of the  \emph{inductive piece} (resp. \emph{hypercube piece}) uses only
the part of the graph which lies (resp. does not lie) in $J$. (Again,
the reader is encouraged to glance at
Figure \ref{fig:hypercubes5}. The nodes necessary for the computation
of the hypercube and inductive piece are in blue (resp. red).)

\subsection{The $q$-derivative of Kazhdan-Lusztig polynomials}
We now introduce a new polynomial, whose knowledge is equivalent to
knowledge of the Kazhdan-Lusztig
polynomial, but which is easier to handle. Define
\[
\qKL_{x,y}(q) = \frac{\KL_{x,y}(q) - q^{\ell(y)-\ell(x)}\KL_{x,y}(q^{-1})}{1- q}.
\]
(One checks easily that the denominator always divides the numerator,
so $\qKL_{x,y}$ is always an integer valued polynomial.)
We refer to $\qKL_{x,y}$ as the \emph{$q$-derivative of the
  Kazhdan-Lusztig polynomial}.\footnote{In the related theory of
  Kazhdan-Lusztig-Stanley polynomials, this polynomial is often called
  the ``$H$-polynomial''.}
Defining properties of
Kazhdan-Luztig polynomials\footnote{more precisely, the fact that their degree
is bounded above by $(\ell(y)-\ell(x)-1)/2$} ensure that $\qKL_{x,y}$
determines $\KL_{x,y}$.

\begin{ex}
  We give three examples of Kazhdan-Lusztig polynomials, and their
  corresponding $q$-derivatives:
  \begin{enumerate}
  \item $\KL_{x,y} = 1$, $\ell(y) - \ell(x) = 3$, $\qKL_{x,y} = 1 + q
    + q^2$,
    $\begin{array}{c}\tikz[scale=.3]{ \draw (0,0) rectangle (1,1);
      \draw (1,0) rectangle (2,1);
      \draw (2,0) rectangle (3,1);}\end{array}
  $.
    \item $\KL_{x,y} = 1 + q$, $\ell(y) - \ell(x) = 3$, $\qKL_{x,y} = 1 + 2q
    + q^2$,
    $\begin{array}{c}\tikz[scale=.3]{ \draw (0,0) rectangle (1,1);
       \draw (1,0) rectangle (2,1);
       \draw (1,1) rectangle (2,2);
      \draw (2,0) rectangle (3,1);}\end{array}
  $.
      \item $\KL_{x,y} = 1 + q^2$, $\ell(y) - \ell(x) = 6$, $\qKL_{x,y} = 1 + q
    + 2q^2 + 2q^3 + q^4+ q^5$,
    $\begin{array}{c}\tikz[scale=.3]{ \draw (0,0) rectangle (1,1);
       \draw (1,0) rectangle (2,1);
       \draw (2,0) rectangle (3,1);
       \draw (2,1) rectangle (3,2);
       \draw (3,0) rectangle (4,1);
       \draw (3,1) rectangle (4,2);
       \draw (4,0) rectangle (5,1);
       \draw (5,0) rectangle (6,1);}
     \end{array}
    $.
  \end{enumerate}
  (We leave it up to the reader to determine the meaning of the box
  diagrams, as well as how to recover the Kazhdan-Lusztig polynomial from them.)
\end{ex}

\subsection{Hypercube clusters} We work in the setting of
directed acyclic graphs. Any such graph is a poset in natural way,
where we declare $x \le y$ if there exists a directed path from $y$ to
$x$.

For any finite set $E$, the \emph{$E$-hypercube} $H_E$ is the directed acyclic graph with:
\begin{enumerate}
\item vertices consisting of subsets of $E$;
\item an edge $I \to J$ if $J$ is obtained from $I$ by removing one element.
\end{enumerate}

\begin{ex}
  $E$-hypercubes $H_E$ for $E = \{0 \}$, $\{ 0, 1\}$ and $\{0,1,2\}$:
  \[
    \begin{array}{c}
      \begin{tikzpicture}
        \node (0) at (0,1) {$\{ 0 \}$};
        \node (empty) at (0,0) {$\emptyset$};
        \draw[->] (0) to (empty);
        \end{tikzpicture} 
    \end{array}
\quad
       \begin{array}{c}
      \begin{tikzpicture}[scale=0.8]
        \node (01) at (0,2) {$\{ 0,1 \}$};
        \node (1) at (1,1) {$\{ 1 \}$};
        \node (0) at (-1,1) {$\{ 0 \}$};
        \node (empty) at (0,0) {$\emptyset$};
        \draw[->] (0) to (empty);
        \draw[->] (1) to (empty);
        \draw[->] (01) to (0); 
        \draw[->] (01) to (1);
        \end{tikzpicture} 
       \end{array}
       \quad
       \begin{array}{c}
      \begin{tikzpicture}[scale=0.8]
        \node (012) at (0,3) {$\{ 0,1,2 \}$}; 
        \node (01) at (-2,2) {$\{ 0,1 \}$};
        \node (02) at (0,2) {$\{ 0,2 \}$};
        \node (12) at (2,2) {$\{ 1,2 \}$};
        \node (2) at (2,1) {$\{ 2 \}$};
        \node (1) at (0,1) {$\{ 1 \}$};
        \node (0) at (-2,1) {$\{ 0 \}$};
        \node (empty) at (0,0) {$\emptyset$};
        \draw[->] (0) to (empty); \draw[->] (1) to (empty); \draw[->] (2) to (empty);
        \draw[->] (01) to (0); \draw[->] (01) to (1); 
        \draw[->] (02) to (0); \draw[->] (02) to (2); 
        \draw[->] (12) to (1); \draw[->] (12) to (2);
        \draw[->] (012) to (01); \draw[->] (012) to (02); \draw[->] (012) to (12);
        \end{tikzpicture} 
      \end{array}
    \]
  \end{ex}

Now suppose given a directed acyclic graph $X$ (in our applications
$X$ will be a Bruhat graph) and a node $x \in X$. We say that a subset
$E$ of edges with target $x$ 
\emph{spans a hypercube} if there exists a unique embedding (i.e. injection) of
directed graphs
\[
\vartheta : H_E \to X
\]
sending the edge $(\{ \alpha \} \to
\emptyset)$ in $H_E$ to $\alpha$, for all edges $\alpha$ in $E$. If $E$
spans a hypercube, its \emph{crown} is $\vartheta(E)$.

\begin{ex} \label{ex:S3}
  Consider the following directed graph (isomorphic to the
  Bruhat graph of $S_3$):
  \[
    \begin{array}{c}
      \begin{tikzpicture}[scale=0.6]
        \node (id) at (-90:2) {$\bullet$};
        \node (t) at (-30:2) {$\bullet$};
        \node (s) at (-150:2) {$\bullet$};
        \node (ts) at (30:2) {$\bullet$};
        \node (st) at (150:2) {$\bullet$};
        \node (sts) at (90:2) {$\bullet$};
        \draw[->] (sts) to (st); \draw[->] (sts) to (ts); \draw[->]
        (sts) to (id);
        \draw[->] (st) to (s); \draw[->] (st) to (t); 
        \draw[->] (ts) to (s); \draw[->] (ts) to (t);
        \draw[->] (s) to (id); \draw[->] (t) to (id); 
      \end{tikzpicture}
        \end{array}
    \]
    In the following we show some pairs of edges with common target
    with a colour. The pairs of red marked edges do not span hypercubes, whereas the
    blue edges do:
     \[
    \begin{array}{c}
      \begin{tikzpicture}[scale=0.6]
        \node (id) at (-90:2) {$\bullet$};
        \node (t) at (-30:2) {$\bullet$};
        \node (s) at (-150:2) {$\bullet$};
        \node (ts) at (30:2) {$\bullet$};
        \node (st) at (150:2) {$\bullet$};
        \node (sts) at (90:2) {$\bullet$};
        \draw[->] (sts) to (st); \draw[->] (sts) to (ts); \draw[red,->]
        (sts) to (id);
        \draw[->] (st) to (s); \draw[->] (st) to (t); 
        \draw[->] (ts) to (s); \draw[->] (ts) to (t);
        \draw[red,->] (s) to (id); \draw[->] (t) to (id); 
      \end{tikzpicture}
    \end{array}, 
        \begin{array}{c}
      \begin{tikzpicture}[scale=0.6]
        \node (id) at (-90:2) {$\bullet$};
        \node (t) at (-30:2) {$\bullet$};
        \node (s) at (-150:2) {$\bullet$};
        \node (ts) at (30:2) {$\bullet$};
        \node (st) at (150:2) {$\bullet$};
        \node (sts) at (90:2) {$\bullet$};
        \draw[->] (sts) to (st); \draw[->] (sts) to (ts); \draw[->]
        (sts) to (id);
        \draw[->] (st) to (s); \draw[->] (st) to (t); 
        \draw[->] (ts) to (s); \draw[->] (ts) to (t);
        \draw[red,->] (s) to (id); \draw[red,->] (t) to (id); 
      \end{tikzpicture}
        \end{array},
            \begin{array}{c}
      \begin{tikzpicture}[scale=0.6]
        \node (id) at (-90:2) {$\bullet$};
        \node (t) at (-30:2) {$\bullet$};
        \node (s) at (-150:2) {$\bullet$};
        \node (ts) at (30:2) {$\bullet$};
        \node (st) at (150:2) {$\bullet$};
        \node (sts) at (90:2) {$\bullet$};
        \draw[->] (sts) to (st); \draw[->] (sts) to (ts); \draw[->]
        (sts) to (id);
        \draw[blue,->] (st) to (s); \draw[->] (st) to (t); 
        \draw[blue,->] (ts) to (s); \draw[->] (ts) to (t);
        \draw[->] (s) to (id); \draw[->] (t) to (id); 
      \end{tikzpicture}
      \end{array}
    \]
    It should be clear that the first pair of red edges do not span a
    hypercube. In the second example, the issue is that 
    there is not a
    \emph{unique} way to map in a hypercube, with these specified base
    vertices.
  \end{ex}

  We now come to a key definition. Suppose that $E$ is a set of arrows
  with target $x$ as above. We say that $E$ \emph{spans a hypercube
    cluster} if every subset $E' \subseteq E$ consisting of arrows with
  pairwise incomparable sources spans a hypercube.\footnote{Elements
    $a$ and $b$ in a poset are \emph{incomparable} if neither $a \le b$
    nor $b \le a$ holds.}

  \begin{ex} Continuing the previous example, the pairs of blue arrows
    span hypercube clusters, whereas the red arrows do not:
      \[
    \begin{array}{c}
      \begin{tikzpicture}[scale=0.6]
        \node (id) at (-90:2) {$\bullet$};
        \node (t) at (-30:2) {$\bullet$};
        \node (s) at (-150:2) {$\bullet$};
        \node (ts) at (30:2) {$\bullet$};
        \node (st) at (150:2) {$\bullet$};
        \node (sts) at (90:2) {$\bullet$};
        \draw[->] (sts) to (st); \draw[->] (sts) to (ts); \draw[blue,->]
        (sts) to (id);
        \draw[->] (st) to (s); \draw[->] (st) to (t); 
        \draw[->] (ts) to (s); \draw[->] (ts) to (t);
        \draw[blue,->] (s) to (id); \draw[->] (t) to (id); 
      \end{tikzpicture}
    \end{array}, 
        \begin{array}{c}
      \begin{tikzpicture}[scale=0.6]
        \node (id) at (-90:2) {$\bullet$};
        \node (t) at (-30:2) {$\bullet$};
        \node (s) at (-150:2) {$\bullet$};
        \node (ts) at (30:2) {$\bullet$};
        \node (st) at (150:2) {$\bullet$};
        \node (sts) at (90:2) {$\bullet$};
        \draw[->] (sts) to (st); \draw[->] (sts) to (ts); \draw[->]
        (sts) to (id);
        \draw[->] (st) to (s); \draw[->] (st) to (t); 
        \draw[->] (ts) to (s); \draw[->] (ts) to (t);
        \draw[red,->] (s) to (id); \draw[red,->] (t) to (id); 
      \end{tikzpicture}
        \end{array},
            \begin{array}{c}
      \begin{tikzpicture}[scale=0.6]
        \node (id) at (-90:2) {$\bullet$};
        \node (t) at (-30:2) {$\bullet$};
        \node (s) at (-150:2) {$\bullet$};
        \node (ts) at (30:2) {$\bullet$};
        \node (st) at (150:2) {$\bullet$};
        \node (sts) at (90:2) {$\bullet$};
        \draw[->] (sts) to (st); \draw[->] (sts) to (ts); \draw[->]
        (sts) to (id);
        \draw[blue,->] (st) to (s); \draw[->] (st) to (t); 
        \draw[blue,->] (ts) to (s); \draw[->] (ts) to (t);
        \draw[->] (s) to (id); \draw[->] (t) to (id); 
      \end{tikzpicture}
      \end{array}
    \]
    In the first diagram, the sources of the blue arrows are
    comparable in $X$, so the condition to span a hypercube cluster reduces
    to each singleton spanning a hypercube, which is trivially the case.
  \end{ex}

  Suppose that $E$ spans a hypercube cluster. Then the edges in $E$
  form a partially ordered set by declaring that $\alpha \le \beta$ if
  there exists a directed path from the source of $\beta$ to that of
  $\alpha$. Given a subset $F \subset E$, we define $F_{\max}$ to be
  the subset of maximal elements with respect to this poset
  structure. Thus $F_{\max} \subset F$ consists of incomparable
  elements. Define the \emph{hypercube map}
\[
\theta : F \mapsto \text{crown of the hypercube spanned by
  $F_{\max}$.}
  \]
(Later we will see another rather different looking map arising from
geometry, and finally see that both are equal. Later we will sometimes
refer to $\theta$ as defined above as the \emph{combinatorial
  hypercube map}.)

  \subsection{Diamonds}
Given a directed
  graph $X$, a \emph{diamond} in $X$ is a subgraph isomorphic to
  \[
    \begin{array}{c}
      \tikz[scale=0.8]{
      \node (t) at (0,1) {$\bullet$};
\node (l) at (-1,0) {$\bullet$};
      \node (r) at (1,0) {$\bullet$};
      \node (b) at (0,-1) {$\bullet$};
      \draw[->] (t) to (l); \draw[->] (t) to (r);
      \draw[->] (l) to (b); \draw[->] (r) to (b);      }
    \end{array}
  \]
  A full subgraph $J \subset X$ is \emph{diamond
      complete}\footnote{This notion is due to Patimo \cite{PatimoKL}.}
    if whenever
it contains two edges sharing a node, it contains the entire
diamond. In other words, for all diamonds in $X$, if $J$ contains the red edges in any of the
diagrams below, it necessarily contains the black edges as well:
  \[
    \begin{array}{c}
      \tikz[scale=0.8]{
      \node (t) at (0,1) {$\bullet$};
\node (l) at (-1,0) {$\bullet$};
      \node (r) at (1,0) {$\bullet$};
      \node (b) at (0,-1) {$\bullet$};
      \draw[red,->] (t) to (l); \draw[red,->] (t) to (r);
      \draw[->] (l) to (b); \draw[->] (r) to (b);      }
    \end{array},
    \begin{array}{c}
      \tikz[scale=0.8]{
      \node (t) at (0,1) {$\bullet$};
\node (l) at (-1,0) {$\bullet$};
      \node (r) at (1,0) {$\bullet$};
      \node (b) at (0,-1) {$\bullet$};
      \draw[red,->] (t) to (l); \draw[->] (t) to (r);
      \draw[red,->] (l) to (b); \draw[->] (r) to (b);      }
    \end{array}, 
    \begin{array}{c}
      \tikz[scale=0.8]{
      \node (t) at (0,1) {$\bullet$};
\node (l) at (-1,0) {$\bullet$};
      \node (r) at (1,0) {$\bullet$};
      \node (b) at (0,-1) {$\bullet$};
      \draw[->] (t) to (l); \draw[->] (t) to (r);
      \draw[red,->] (l) to (b); \draw[red,->] (r) to (b);      }
    \end{array}
    \]

      \subsection{Hypercube decompositions} Recall that $X = [x,y]$
      denotes the Bruhat graph of the interval between $x$ and $y$. The following is the most
      important definition of this work. 
    We say that a full subgraph $J \subset X$ is a \emph{hypercube decomposition} if
    \begin{enumerate}
    \item $J = \{ v \in X \; | \; v \le z\}$ for some $y \ne z \in X$, and
      $J$ is diamond complete;
     \item for all $v \in J$, the set $E = \{ \alpha : u \to v \; | \;
       u \notin J \}$ spans a hypercube cluster.
    \end{enumerate}

    \begin{ex} Continuing Example \ref{ex:S3}, here are some possible
      choices of $J$ (indicated by red):
  \[
    \begin{array}{c}
      \begin{tikzpicture}[scale=0.6]
        \node[red] (id) at (-90:2) {$\bullet$};
        \node (t) at (-30:2) {$\bullet$};
        \node (s) at (-150:2) {$\bullet$};
        \node (ts) at (30:2) {$\bullet$};
        \node (st) at (150:2) {$\bullet$};
        \node (sts) at (90:2) {$\bullet$};
        \draw[->] (sts) to (st); \draw[->] (sts) to (ts); \draw[->]
        (sts) to (id);
        \draw[->] (st) to (s); \draw[->] (st) to (t); 
        \draw[->] (ts) to (s); \draw[->] (ts) to (t);
        \draw[->] (s) to (id); \draw[->] (t) to (id); 
      \end{tikzpicture}
    \end{array},
        \begin{array}{c}
      \begin{tikzpicture}[scale=0.6]
        \node[red] (id) at (-90:2) {$\bullet$};
        \node[red] (t) at (-30:2) {$\bullet$};
        \node (s) at (-150:2) {$\bullet$};
        \node (ts) at (30:2) {$\bullet$};
        \node (st) at (150:2) {$\bullet$};
        \node (sts) at (90:2) {$\bullet$};
        \draw[->] (sts) to (st); \draw[->] (sts) to (ts); \draw[->]
        (sts) to (id);
        \draw[->] (st) to (s); \draw[->] (st) to (t); 
        \draw[->] (ts) to (s); \draw[->] (ts) to (t);
        \draw[->] (s) to (id); \draw[red,->] (t) to (id); 
      \end{tikzpicture}
        \end{array},
            \begin{array}{c}
      \begin{tikzpicture}[scale=0.6]
        \node[red] (id) at (-90:2) {$\bullet$};
        \node[red] (t) at (-30:2) {$\bullet$};
        \node[red] (s) at (-150:2) {$\bullet$};
        \node[red] (ts) at (30:2) {$\bullet$};
        \node (st) at (150:2) {$\bullet$};
        \node (sts) at (90:2) {$\bullet$};
        \draw[->] (sts) to (st); \draw[->] (sts) to (ts); \draw[->]
        (sts) to (id);
        \draw[->] (st) to (s); \draw[->] (st) to (t); 
        \draw[red,->] (ts) to (s); \draw[red,->] (ts) to (t);
        \draw[red,->] (s) to (id); \draw[red,->] (t) to (id); 
      \end{tikzpicture}
        \end{array}
      \]
      Only the middle choice of $J$ constitutes a hypercube
      decomposition. In the example on the left, the edges arriving at
      the base vertex do not span a hypercube cluster. In the example
      on the right, $J$ is not diamond complete. Here is the incomplete diamond:
      \[
      \begin{array}{c}
           \begin{tikzpicture}[scale=0.6]
        \node[red] (id) at (-90:2) {$\bullet$};
        \node[red] (t) at (-30:2) {$\bullet$};
        \node[red] (s) at (-150:2) {$\bullet$};
        \node[red] (ts) at (30:2) {$\bullet$};
        \node (st) at (150:2) {$\bullet$};
        \node (sts) at (90:2) {$\bullet$};
        \draw[gray!30!white,->] (sts) to (st); \draw[gray!30!white,->] (sts) to (ts); \draw[gray!30!white,->]
        (sts) to (id);
        \draw[->] (st) to (s); \draw[->] (st) to (t); 
        \draw[red!20!white,->] (ts) to (s); \draw[red!20!white,->] (ts) to (t);
        \draw[red,->] (s) to (id); \draw[red,->] (t) to (id); 
      \end{tikzpicture}
    \end{array}
    \]
    \end{ex}
    
    \subsection{The hypercube piece} \label{sec:hypercube piece}
    We are now ready to define the
hypercube piece and inductive piece in our formula. We assume that $X$
is the Bruhat graph corresponding to the interval $[x,y]$ and that we
have fixed a hypercube decomposition $J \subset X$. The set
\[
  E = \{ \alpha: v \to x \; | \;  v \notin J \}
\]
spans a hypercube cluster by definition. In particular we have a
hypercube map:
\[
\theta : \text{subsets of $E$} \to X
  \]
We consider the polynomial:
\[
\widetilde{Q}_{x,y,J} = \sum_{\emptyset \ne I \subset E} (q-1)^{|I|-1} \KL_{\theta(I),y}
\in \ZM[q].
\]
(Note that we may assume that all terms on the right hand side are
known by induction.) We define the hypercube piece as follows:
\[
Q_{x,y,J} = q^{\ell(y)-\ell(x)-1} \widetilde{Q}_{x,y,J} (q^{-1}).
  \]

  \subsection{The inductive piece} \label{sec:inductive_combinatorial}
  Let $J \subset X$ and consider the
free $\ZM[q]$-module:
\[
M_J = \bigoplus_{ x \ne v \in J} \ZM[q] \delta_v.
  \]
This has a \emph{standard basis} $\{ \delta_v \; | \; v \in J \}$. If
we define
\[
b_v = \sum_{x \ne w \in J} \KL_{w,v} \cdot \delta_w
\]
then $\{b_v \; | \;  x \ne v \in J \}$ is also a basis for $M$, which
we call the \emph{Kazhdan-Lusztig basis}. This basis is
known by induction.

We may now define the inductive piece. Recall that $X$ is a Bruhat interval, with top node $y$. Define
\[
r_{x,y,J} = \sum_{x \ne v \in J} \KL_{v,y} \cdot  \delta_v  \in M.
\]
(In other words we consider all inductively computed Kazhdan-Lusztig
and ``restrict'' to $J$.) Now expand $r$ in the Kazhdan-Lusztig
basis:
\[
r_{x,y,J} = \sum_{x \ne v \in J} \gamma_v \cdot b_v.
  \]
  The inductive piece is defined as follows:
  \[
I_{x,y,J} = \sum_{x \ne v \in J}  \gamma_v \cdot \qKL_{x,v}.
    \]

    \subsection{A theorem} As above, $X = [x,y]$ denotes the Bruhat
    graph of the interval between $x$ and $y$.

    Suppose for a moment that we know the labelling of the nodes of
    $X$ by permutations. (Note that this is forbidden information in
    the combinatorial invariance conjecture.) In this case, consider
    the full subgraph
    \[
      L= \{ v \in X \; | \; v^{-1}(0) = x^{-1}(0) \} \subset X.
    \]

    \begin{remark}
  For any node $v \in L$, the ``hypercube edges'' (i.e. those edges
  with targed $v$ and source $\notin L$) are those edges
  corresponding to swapping $0$ and $i$ in $v$. These are precisely
  the edges which saliency analysis tell us are most important in our
  machine learning models (see Figure 3(a) in \cite{Nature}). This was
  our initial motivation for considering $L$.
\end{remark}
    
We have:

\begin{thm} \label{thm:main}
 $L \subset X$ is a hypercube decomposition, and we have:
 \[
\qKL_{x,y} = I_{x,y,L} + Q_{x,y,L}.
   \]
 \end{thm}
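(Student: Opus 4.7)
The plan is to split the argument into a combinatorial verification that $L$ is a hypercube decomposition and a geometric computation of the formula.

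First, we verify that $L$ is a hypercube decomposition. The set $L = \{v \in [x,y] : v^{-1}(0) = x^{-1}(0)\}$ is the intersection of $[x,y]$ with the coset $xH$, where $H \subset S_n$ is the subgroup of permutations fixing $x^{-1}(0)$ (a conjugate of $S_{n-1}$). By standard results on parabolic quotients in Coxeter groups, such a coset intersection is itself a Bruhat interval $[x,c]$ for a unique maximal element $c \in [x,y]$, so condition (1) in the definition of hypercube decomposition will hold once we check diamond-completeness: but any edge leaving $L$ strictly alters the value at $x^{-1}(0)$, so a diamond having two edges in $L$ forces all four corners to share that value. For condition (2), the edges $u \to v$ with $u \notin L$ arriving at a vertex $v \in L$ correspond to transpositions of the form $t_{(v^{-1}(0), j)}$; we defer the verification that any pairwise-incomparable subfamily spans a hypercube in $X$ to \S\ref{sec:hypercube combinatorics}, where one argues using the explicit permutation combinatorics (tracking where $0$ moves).

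Next we interpret both sides of the formula geometrically. Let $X_y \subset G/B$ be the Schubert variety and $\ic_y$ its intersection cohomology complex. The $q$-derivative $\qKL_{x,y}$ is, up to a shift, the character of an equivariant stalk of $\ic_y$ at the $T$-fixed point $x$ (the Kazhdan-Lusztig-Stanley $H$-polynomial). Consider the projection $\pi : G/B \to G/P \cong \mathbb{P}^{n-1}$, where $P$ stabilizes the line $\langle e_0 \rangle$. The fiber through $x$ is a smaller flag variety on which $GL_{n-1}$ acts, and its $T$-fixed points in $[x,y]$ are exactly the elements of $L$. We separate the contribution to the stalk character into pieces coming from $T$-fixed points of $X_y$ inside this fiber and those outside; the claim is that these two pieces are respectively $I_{x,y,L}$ and $Q_{x,y,L}$.

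For the contribution inside the fiber, the restriction of $\ic_y$ to the fiber decomposes as a sum of (shifted) intersection cohomology complexes on the smaller flag variety, with multiplicities encoded by the coefficients expressing $r_{x,y,L} = \sum_{v \in L, v \ne x} \KL_{v,y}\delta_v$ in the Kazhdan-Lusztig basis $\{b_v\}$ of $M_L$; these coefficients are exactly the $\gamma_v$. Taking characters term by term then yields $\sum_v \gamma_v \qKL_{x,v} = I_{x,y,L}$. For the contribution outside the fiber, we use the local geometry near $x$: the $T$-fixed points of $X_y$ connected to $x$ by one-dimensional $T$-orbits transverse to the fiber organize into a product/hypercube pattern, and the combinatorial hypercube map $\theta$ defined earlier will be shown (in \S\ref{sec:hypercube combinatorics}) to match this geometric organization. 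A Mayer-Vietoris / inclusion-exclusion over the relevant $T$-orbits produces the alternating sum $\widetilde{Q}_{x,y,L} = \sum_{\emptyset \ne I \subset E}(q-1)^{|I|-1}\KL_{\theta(I),y}$, and the Poincaré-duality twist $q^{\ell(y)-\ell(x)-1}\widetilde{Q}(q^{-1})$ turning $\widetilde{Q}$ into $Q_{x,y,L}$ reflects the passage from local cohomology to the stalk character.

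The main obstacle will be the identification in the second step: one must show that the combinatorial hypercube map $\theta$ coming from the Bruhat graph genuinely coincides with the geometric map sending a set of $T$-invariant curves emanating from $x$ to the opposite $T$-fixed point of the hypercube they span in $X_y$. This amounts to a local study of $T$-fixed points, one-dimensional $T$-orbits, and weight characters in a neighborhood of $x$ in $X_y$, and is where the geometric input (Schubert varieties, torus actions, weights) does the essential work. Once this matching is in place, the theorem reduces to the character identity packaged above.
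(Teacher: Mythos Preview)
Your overall shape---a ``fiber'' contribution giving the inductive piece via decomposition of the restricted IC sheaf, and a ``transverse'' contribution giving the hypercube piece, with the combinatorial/geometric hypercube map comparison deferred to \S\ref{sec:hypercube combinatorics}---matches the paper's architecture. But there is a genuine gap in the geometric setup that prevents the argument from going through as written.

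The problem is your identification of $\qKL_{x,y}$ as ``the character of an equivariant stalk of $\ic_y$ at $x$''. The stalk (equivariant or not) has Poincar\'e polynomial $\KL_{x,y}$, not $\qKL_{x,y}$. The paper's starting point (Proposition~\ref{prop:qkl}) is that $\qKL_{x,y}$ is the Poincar\'e polynomial of $IH^\bullet(\PM_\lambda \dot{S}_{x,y})$, the intersection cohomology of the \emph{projectivized punctured slice}---a genuine projective variety, not a stalk. Without this, your phrase ``separate the contribution to the stalk character into pieces inside and outside the fiber'' does not refer to any well-defined decomposition of a cohomology group. The paper then takes an explicit open/closed decomposition $Z \hookrightarrow \PM_\lambda \dot{S}_{x,y} \hookleftarrow U$ coming from coordinates on the slice (\S\ref{sec:slices}--\S\ref{sec:decomp}), and the entire content of the proof is that the associated long exact sequence degenerates to a short exact sequence because all three terms are \emph{pure of weight zero}. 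Establishing this purity is the technical heart: for $U$ one needs an equivariant retraction onto $Z'\cong \PM_\lambda \dot{S}_{x,c}$ via an explicit cocharacter $\gamma$ together with Braden's hyperbolic localization theorem (\S\ref{sec:inductive_sheaves}); for $Z$ (a weighted projective space) one needs the pointwise criterion of Proposition~\ref{prop:pure}/Corollary~\ref{cor:pure}, whose hypothesis is verified using the Braden--MacPherson surjectivity of IC stalk restriction maps. Your ``Mayer--Vietoris / inclusion-exclusion'' gesture does not supply these purity arguments, and without them the connecting homomorphism need not vanish. Your treatment of the inductive piece (multiplicities $\gamma_v$ from the decomposition of the restricted IC) is essentially correct and matches Proposition~\ref{prop:gammas}; likewise your deferral of $\theta = \theta_{\geom}$ to \S\ref{sec:hypercube combinatorics} is what the paper does. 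Finally, your one-line diamond-completeness argument is incomplete: two adjacent edges of a diamond in $L$ constrain only three of the four vertices to satisfy $v^{-1}(0)=m$, and you still need to force the fourth.
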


We will prove this formula in \S\ref{sec:proof} and \S\ref{sec:hypercube combinatorics}. This is a powerful new formula for
Kazhdan-Lusztig polynomials for symmetric groups, and should have
other applications. However, it does not solve the combinatorial
invariance conjecture, as the node labellings are needed to define
$L$. 

\subsection{A conjecture} 

Theorem \ref{thm:main} motivated us to consider more general
hypercube decompositions. Remarkably, it seems that this combinatorial
notion is exactly what is needed to make the above 
theorem hold:

\begin{conj} \label{conj:main} For any hypercube decomposition $J
  \subset X$ we have
  \[
\qKL_{x,y} = I_{x,y,J} + Q_{x,y,J}.
   \]
\end{conj}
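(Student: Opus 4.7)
My plan is to follow the cohomological route sketched in \S\ref{sec:conjecture}, which reduces Conjecture \ref{conj:main} to a purity/surjectivity statement. The idea is to imitate the proof of Theorem \ref{thm:main} given in \S\ref{sec:proof}--\S\ref{sec:hypercube combinatorics}, but to replace the geometric input coming from the $S_{n-1}$-coset structure by the axiomatic consequences of $J$ being an abstract hypercube decomposition.

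The first step is to isolate, in the proof of Theorem \ref{thm:main}, precisely where the special nature of $L$ is used. Two inputs appear essential: (i) that the local equivariant weight data at each node of $X$ lying over $L$ is controlled by a hypercube of torus weights; and (ii) a surjectivity statement expressing $\IH(X_y)$ as a sum of a ``restriction to $L$'' piece (yielding $I_{x,y,L}$) and a complementary ``hypercube'' piece (yielding $Q_{x,y,L}$). Input (i) is exactly what the hypercube cluster condition in the definition of a hypercube decomposition abstracts; the strategy is to show that input (ii) also follows from the axioms, for any hypercube decomposition $J$.

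The natural framework for this is the Fiebig / Braden--MacPherson moment graph picture, in which $\IH(X_y)$ is computed combinatorially from a sheaf on the Bruhat graph of $[x,y]$. Here the restriction to $J$ and the complementary data both make sense purely combinatorially, and Conjecture \ref{conj:main} should manifest itself as the Euler characteristic identity attached to a short exact sequence of moment graph sheaves. I would prove both the exactness and the identification of the two outer terms with $I_{x,y,J}$ and $Q_{x,y,J}$ by induction on $|X\setminus J|$, peeling off one node at a time from the top of $X\setminus J$ and using the hypercube cluster condition at the corresponding base vertex in $J$ to check surjectivity of restriction at each step.

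The hard part will be establishing the required surjectivity in the absence of a global torus action. The hypercube cluster condition at a node $v \in J$ asserts that the sources of edges from outside $J$ into $v$ span a hypercube; in moment-graph language this should mean that the relevant weights are ``as independent as possible,'' forcing vanishing of the obstruction to extending a section across $v$. Diamond-completeness of $J$ should then ensure that such local extensions glue consistently across $J$. Making this dictionary precise---and in particular verifying that the resulting kernel or cokernel actually computes $Q_{x,y,J}$ rather than some other polynomial---is the principal technical obstacle, and is what distinguishes Conjecture \ref{conj:main} from the already-proved Theorem \ref{thm:main}.
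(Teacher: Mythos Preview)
The statement you are attempting is Conjecture~\ref{conj:main}, which the paper explicitly does \emph{not} prove; \S\ref{sec:conjecture} opens with ``Despite considerable effort we are unable to prove Conjecture~\ref{conj:main}.'' So there is no paper proof to compare against, only the partial geometric analysis of \S\ref{sec:conjecture} which isolates the obstacle.

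Your plan is close in spirit to what the paper does in \S\ref{sec:conjecture}: for an arbitrary hypercube decomposition $J$ the paper constructs subvarieties $S_{x,y}^I$ and $S_{x,y}^H$ generalizing those of \S\ref{sec:slices}, proves $S_{x,y}^I = S_{x,z}$ (Proposition~\ref{prop:inductive slice general}, using diamond-completeness via Gelfand--Serganova flag polytopes), and states the analogous identification for $S_{x,y}^H$ as Conjecture~\ref{conj:hyp}. Granting those, \S\ref{sec:obstacle} shows that Conjecture~\ref{conj:main} reduces to any one of the equivalent statements (1)--(8) there, e.g.\ surjectivity of the restriction $\beta : H^\bullet_{T'}(X,\ic) \to H^\bullet_{T'}(Z',(i')^*\ic)$, or purity and equivariant formality of $H^\bullet_{T',c}(X-Z,j^!\ic)$. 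The paper stops at that point.

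Your proposal differs mainly in substituting the Braden--MacPherson / Fiebig moment-graph model for the genuine geometry, and in suggesting an induction on $|X \setminus J|$. This is a legitimate reformulation, but it does not dissolve the obstacle: the Braden--MacPherson sheaf \emph{is} the combinatorial shadow of $\ic$, and the surjectivity you need at each inductive step is the same surjectivity $\beta$ the paper cannot establish. Two specific issues. First, your phrase ``in the absence of a global torus action'' mislocates the problem --- the torus $T$ acts throughout, and the moment graph records precisely that action; what is missing for general $J$ is a \emph{cocharacter} $\gamma$ retracting $X - Z$ onto $Z'$, which is the one special feature of $L$ exploited in \S\ref{sec:inductive_geometry}. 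Second, your induction peeling off one node at a time from $X \setminus J$ does not interact cleanly with the hypercube-cluster axiom, which constrains the edges landing \emph{in} $J$ collectively rather than nodes of $X \setminus J$ individually; the intermediate subgraphs along your induction need not be hypercube decompositions, so the hypothesis you want to invoke is not available at each step. As it stands your proposal correctly identifies the difficulty but, like the paper, does not overcome it.
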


Some remarks on this conjecture:
\begin{enumerate}
\item We have just seen that any interval admits a hypercube
  decomposition, and thus the
  conjecture implies the combinatorial invariance conjecture for
  symmetric groups.
  \item The conjecture is equivalent to the
statement that $I_{x,y,J} + Q_{x,y,J}$ is independent
of the choice of hypercube decomposition.
\item We have considerable computational evidence for this
  conjecture. It has been checked for all hypercube decompositions of
  all Bruhat intervals up to $S_7$, and over a million non-isomorphic
  intervals in $S_8$ and $S_9$.
\end{enumerate}

Positivity plays an important role in Kazhdan-Lusztig
theory. Remarkably, both pieces  $I_{x,y,J}$ and $Q_{x,y,J}$
in our formula should have positive coefficients:
\begin{enumerate}
\item The polynomials $Q_{x,y,J}$ have positive coefficients. (This
  can be shown directly, using the ``unimodality of Kazhdan-Lusztig
  polynomials'' \cite{Irving, BMP}.)
\item Conjecturally, the polynomials $\gamma_v$ involved in the computation
  of the inductive piece have positive coefficients. This has also
  been checked in all the cases mentioned above, and is true for the
  hypercube decomposition $L$ discussed above.
\end{enumerate}

\begin{remark}
It is interesting to ask whether our formula might solve the
combinatorial invariance conjecture for Coxeter groups other than
symmetric goups. It does not, as one can
see by inspecting the 5-crown:
\begin{equation*}
  \begin{tikzpicture}[yscale=.4,xscale=.8]
    \node (t) at (0,3) {$\bullet$};
    \node (t1) at (-2,1) {$\bullet$};     \node (t2) at (-1,1) {$\bullet$};     \node (t3) at
    (0,1) {$\bullet$};     \node (t4) at (1,1) {$\bullet$};     \node (t5) at (2,1) {$\bullet$};
        \node (b1) at (-2,-1) {$\bullet$};     \node (b2) at (-1,-1) {$\bullet$};     \node
        (b3) at (0,-1) {$\bullet$};     \node (b4) at (1,-1) {$\bullet$};     \node (b5) at
        (2,-1) {$\bullet$};
        \node (b) at (0,-3) {$\bullet$};
        \draw (t) -- (t1); \draw (t) -- (t2); \draw (t) -- (t3); \draw
        (t) -- (t4); \draw (t) -- (t5);
   \draw (t1) -- (b2) -- (t3) -- (b4) -- (t5) -- (b5) -- (t4) -- (b3)
   -- (t2) -- (b1) -- (t1);
           \draw (b) -- (b1); \draw (b) -- (b2); \draw (b) -- (b3); \draw
        (b) -- (b4); \draw (b) -- (b5);
  \end{tikzpicture}
\end{equation*}
This occurs as a Bruhat graph of an interval in the group $H_3$
of symmetries of the icosahedron (see
\cite[\S 2.8]{BjBr}). One can check
directly that it does not admit a hypercube decomposition.  
\end{remark}

    \subsection{Two worked examples}
We give the reader two examples of
our formula in action. These examples are
illustrated in Figures \ref{fig:hypercubes4} and
\ref{fig:hypercubes5}.

\begin{figure}[h]
 \includegraphics[scale=.4]{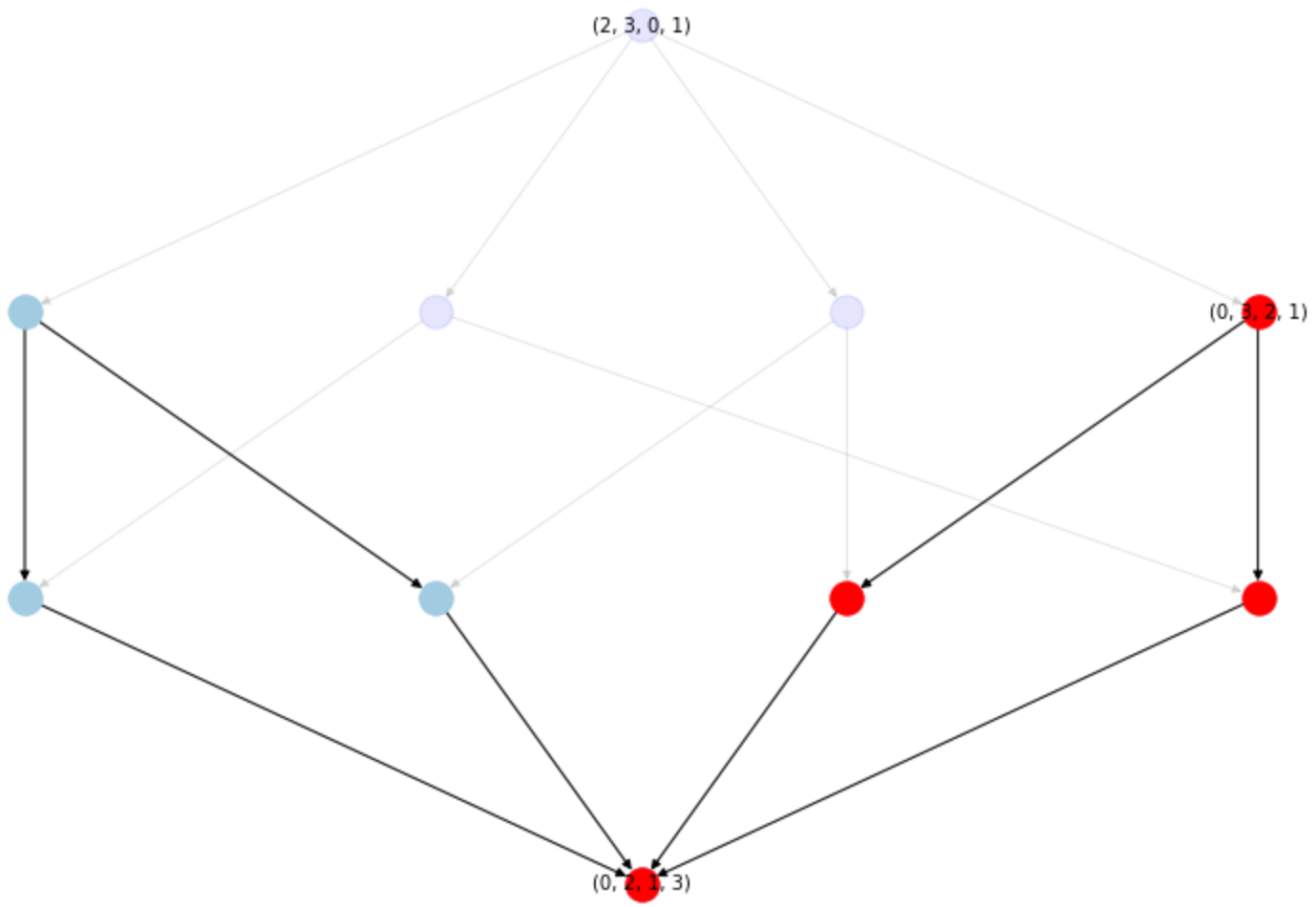}
  \caption{The Bruhat graph for the interval between $x =(0,2,1,3)$
    and $y = (2,3,0,1)$. The image of the hypercube map
    at $x$ is shaded blue, and the inductive piece is shaded red. All
    Kazhdan-Lusztig polynomials $P_{z,y}$ for $ z \ne x$ are 1. All hypercube
    decompositions of this interval are isomorphic to this one.}
  \label{fig:hypercubes4}
\end{figure}

\subsubsection{The interval between $x =(0,2,1,3)$
    and $y = (2,3,0,1)$} This is the first non-trivial
  example of a Kazhdan-Lusztig polynomial. In several respects this
  example is ``too simple'', but we discuss it anyway. The interval together with
  a choice of hypercube decomposition is illustrated in Figure
  \ref{fig:hypercubes4}. (The reader may check that in this
  example all hypercube decompositions are isomorphic.)

  We have
  \[
\KL_{x,y} = 1 + q \quad \text{and} \quad \qKL_{x,y} = 1 + 2q + q^2.
\]
In this case the polynomial $\widetilde{Q}_{x,y,J}$ is
\[
1 + 1 + (q-1) = 1 + q
\]
and hence the hypercube piece is
\[
q^2 ( 1+q^{-1}) = q + q^2.
\]
The inductive piece is
\[
1 + q
\]
and we indeed we have
\[
\qKL_{x,y}  = (1+ q) + (q + q^2).
\]

\begin{figure}[h]
 \includegraphics[scale=.4]{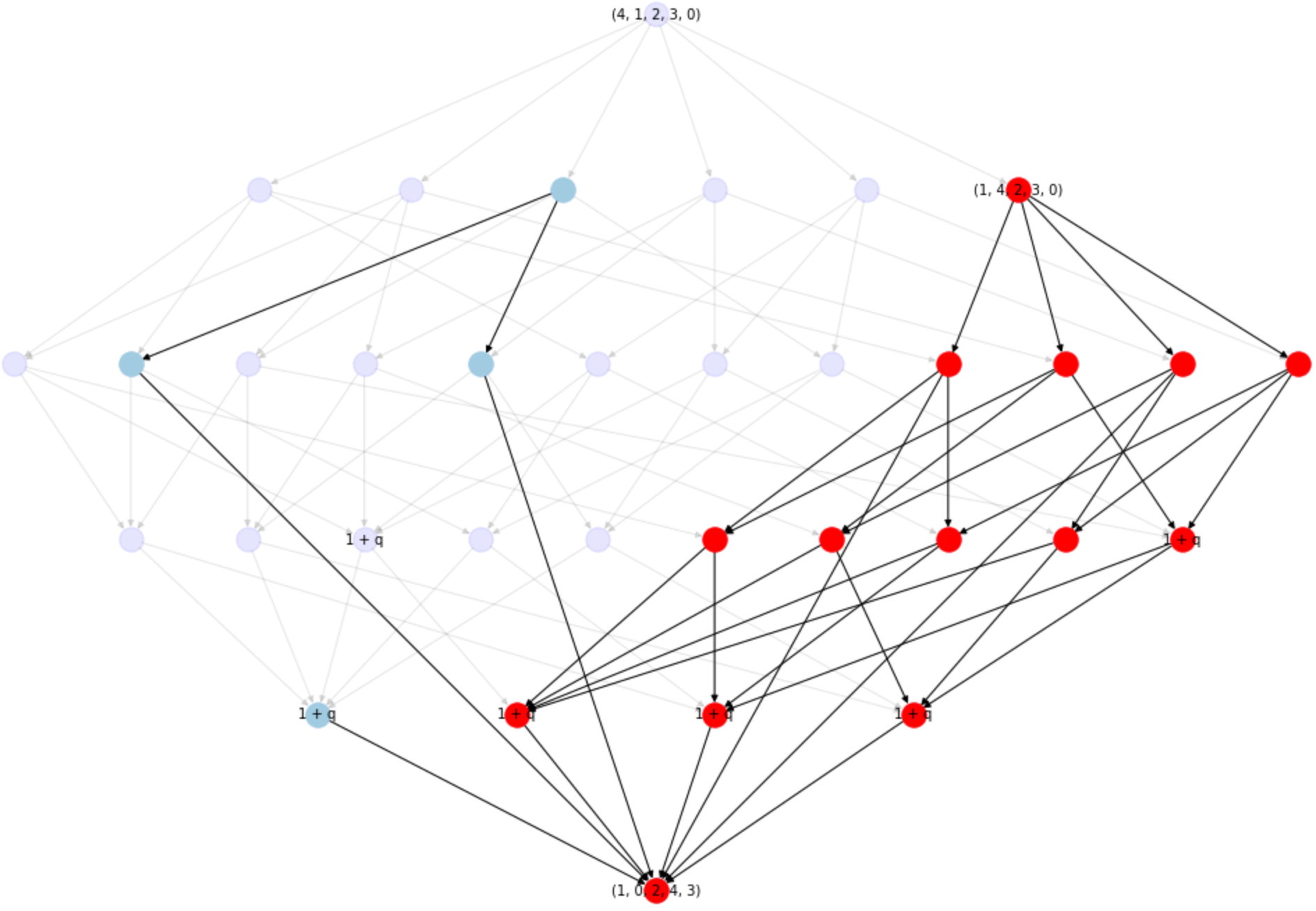}
  \caption{The Bruhat graph for the interval between $x =
    (1,0,2,4,3)$ and $y = (4, 1, 2, 3, 0)$ with a choice of hypercube
    decomposition. The image of the hypercube map 
    at $x$ is shaded blue, and the inductive piece is shaded red. All
    Kazhdan-Lusztig polynomials $P_{z,y}$ for $z \ne x$ are $1$ unless indicated.}
  \label{fig:hypercubes5}
\end{figure}

\subsubsection{The interval between $x =
  (1,0,2,4,3)$ and $y = (4, 1, 2, 3, 0)$.} This is a more interesting
case, and illustrates several features of the general case.  The interval together with
  a choice of hypercube decomposition $J$ is illustrated in Figure
  \ref{fig:hypercubes5}.

  The reader may check with a little work that we have
  \[
\widetilde{Q}_{x,y,J} = 1 + 2q + q^2
\]
and hence
\[
Q_{x,y,J} = q^{4}(1 + 2q^{-1} + q^{-2}) = q^2 + 2q^3 + q^4.
  \]
  We now turn to the inductive piece. We have
  \[
r_{x,y,J} = b_{14230} + q \cdot b_{10432}.
\]
(In Figure \ref{fig:hypercubes5}, 10432 is the only node of length
$\ell(x) + 2$ in the inductive piece with non-trivial Kazhdan-Lusztig
polynomial.) In particular,
\begin{align*}
I_{x,y,J} = \qKL_{x, 14230} + q \cdot \qKL_{x, 10432} &= 1 + 2q + 2q^2 + q^3
+ q(1 + q) \\ & = 1 + 3q + 3q^2 + q^3
\end{align*}
and we deduce correctly that
\[
\qKL_{x,y} = (1 + 3q + 3q^2 + q^3) + (q^2 + 2q^3 + q^4) = 1 + 3q +
4q^2 + 3q^2 + q^4.
\]
Or in other words
\[
P_{x,y} = 1 + 2q + q^2.
\]


  \section{Geometry of the new formula}   \label{sec:proof}

  In this section we explain the proof of Theorem \ref{thm:main}. This
  is a consequence of geometric techniques. We first recall
  fundamental results of Kazhdan and Lusztig and Bernstein and Lunts which
  connect Kazhdan-Lusztig polynomials to the geometry of Schubert
  varieties, and then proceed to the proof.

  \subsection{Geometric background} \label{sec:geo background}

  Given a complex $d$-dimensional complex algebraic variety $X$, we denote by
  $\ic(X,\QM)$ the intersection cohomology complex on $X$, with
  coefficients in $\QM$, normalized so that its restriction to $X$ is
  isomorphic to the constant sheaf \emph{without shift}.
  Given a sheaf $\FS$ of $\QM$ vector spaces on $X$ we denote by
 $H^\bullet(X,\FS)$ its hypercohomology (a graded $\QM$-vector
 space). The intersection cohomology  of $X$ is by definition
 \[
IH^\bullet(X,\QM) = H^\bullet(X, \ic(X,\QM)).
\]
Because of our conventions, the intersection
 cohomology of a complex $d$-dimensional variety is concentrated in
 degrees between $0$ and $2d$, as opposed to the more usual convention
 where it is concentrated in degrees between $-d$ and $d$.
  
 Given a graded $\QM$-vector space $H$ we define its \emph{Poincar\'e
 polynomial} to be
 \[
   \sum_{i \in \ZM} \dim H^i q^{i/2} \in \ZM[q^{\pm 1/2}].
   \]
Given a complex of $\QM$-vector spaces with finite-dimensional total
cohomology we define its Poincar\'e polynomial to be that of its
cohomology groups. In this paper, all Poincar\'e polynomnials we
consider are of vector spaces concentrated in positive even
degrees. In this case the Poincar\'e polynomail is a polynomial in
$q$.

Throughout this section, we will work in the equivariant derived
category (see \cite{BLu}). Throughout $T$ will denote an algebraic
torus acting algebraically on a variety $X$. By an equivariant sheaf
we mean an object of $D^b_T(X,\QM)$. All sheaves considered will be
constructible. On (equivariant) constructible categories we have the usual menagerie of
functors $f^*, f_*, f^!, f_!$ for equivariant maps $f : X \to Y$ which
all commute with the forgetful functor.

We will also need some arguments with weights. We will
exploit the fact that $\ic(X,\QM)$ is a constructible complex
underlying a mixed Hodge module, and hence so are all complexes
obtained from $\ic(X,\QM)$ via standard functors (see e.g.
\cite{Saito,Schnell}). In particular, each
cohomology group $IH^i(X,\QM)$ underlies a mixed Hoge structure, and
we can talk about its weights. Our normalizations are such that
$\ic(X,\QM)$ is pure of weight zero. When we say that a cohomology group $H^\bullet$ is
pure of weight $w$, we mean that $H^i$ is pure of weight $w + i$ for
all $i$. We can also talk about weights in
equivariant cohomology, by taking appropriate resolutions in the
category of algebraic varieities. Our demands on the theory of weights
are modest, and the reader who prefers to work with \'etale cohomology
and Frobenius weights will have no difficulty adapting the arguments.

\subsection{Tools of the trade} \label{sec:tools}
The following techniques will be used repeatedly:
\begin{enumerate}
\item (``Attractive Proposition'') Suppose that there exists a   $\CM^*$-action on $X$ 
  which \emph{retracts $X$ equivariantly onto $X^{\CM^*}$}. In other words, there
  exists a diagram
  \[
a : \CM \times X \to X
\]
such that the restriction of $a$ to $\CM^* \times X \to X$ agrees with
the action map, and such that $\{ 0 \} \times X$ maps into
$X^{\CM^*}$. (Roughly speaking, this says that the $\CM^*$-action
extends over $0$, attracting $X$ onto $X^{\CM^*}$.) We can consider
the inclusion and projection maps
\[
  \begin{tikzpicture}[xscale=1.2,yscale=.7]
 \node (l)  at (0,0) {$X^{\CM^*}$};   
 \node (r)  at (2,0) {$X$};
 \draw[->] (l) to[out=45,in=135] node[above] {$i$} (r);
 \draw[->] (r) to[out=-135,in=-45] node[below] {$p$} (l);
\end{tikzpicture}
\]
where $p : X = \{ 0 \} \times X \to X^{\CM^*}$ is induced by
$a$. Given any $\CM^*$-equvariant constructible sheaf $\FS$ on $X$, we
have canonical 
isomorphisms
\[
p_*\FS \simto i^*\FS \quad \text{and} \quad i^!\FS \simto p_!\FS. 
\]
We have similar isomorphisms of $X$ is a $T$-space, and $\CM^*$
commutes with the $T$-action. This technique is fundamental to
Geometric Representation Theory, for proofs and more detail see \cite{SpringerPurity},
\cite[Proposition 1]{Soergeln} and \cite[\S 2.6]{FW}.
\item (``Attractive Weight Argument'') Let us stay in the setting of
  the previous point. We have seen that we have an isomorphism
  \[  p_*\FS \simto i^*\FS. \]
  Now let us assume that $\FS$ is pure (of weight $0$, say). Then
  $i^*\FS$ is of weights $\le 0$, and $p_*\FS$ is of weight $\ge 0$
  because $i^*$ (resp. $p_*$) can only decrease (resp. increase)
  weights \cite[Proposition 1.7]{Saito}. Thus $i^*\FS$ is pure of
  weight $0$. An identical argument establishes that $i^!\FS$ is also
  pure of weight 0.
\item (``Finite Stabilizer Argument'') Suppose that
  $\CM^*$ acts on $X$ with finite stabilizers with (geometric) quotient
  $X/\CM^*$. Then we have a full embedding
  \[
    D^b(X/\CM^*, \QM) \to D^b_{\CM^*}(X,\QM)
\]
If $\FS$ and $\tilde{\FS}$ correspond under this embedding, we have a
canonical isomorphism
\[
H^\bullet_{\CM^*}(X,\FS) = H^\bullet(X/\CM^*,\tilde{\FS}).
\]
We have analogous statements for equivariant cohomology and
equivariant derived categories (i.e. when $X$ is a $T$-space). If we assume a free action, these facts are true very generally, and
are known as the quotient equivalence \cite[Proposition 2.2.5]{BLu}. When we allow
finite stabilizers the situation is a little more subtle, and coefficients of
characteristic zero are essential \cite[\S 9]{BLu}.
\end{enumerate}
  
  \subsection{Geometric interpretation of Kazhdan-Lusztig
    polynomials} Let $G = GL_{n+1}(\CM)$, $B \subset G$ the Borel subgroup of
  upper triangular matrices and $T$ the maximal torus of diagonal
  matrices. We identify the Weyl group with permutation matrices,
  which is canonically isomorphic to $S_{n+1}$. We have the flag variety $G/B$ and its
  Bruhat decomposition into Schubert cells
  \[
G/B = \bigsqcup_{y \in S_{n+1}} ByB/B.
\]
For any $y \in S_{n+1}$ we can consider the Schubert variety
 $\overline{ByB/B} \subset G/B$. On $\overline{ByB/B}$ we can consider
 the intersection cohomology complex $\ic(\overline{ByB/B}, \QM)$ (see
 \S\ref{sec:geo background} for conventions).  We have the following
 fundamental result of Kazhdan and Lusztig
 \cite{KLSchubert}:

 \begin{thm} \label{thm:kl}
   The cohomology sheaves $H^i(\ic(\overline{ByB/B}, \QM)_{xB/B})$ vanish
   in odd degree, moreover their Poincar\'e polynomial is given by the
   Kazhdan-Lusztig polynomial:
   \[
P_{x,y} = \sum_{i \ge 0} H^{2i}(\ic(\overline{ByB/B}, \QM) _{xB/B}) q^i.
     \]
 \end{thm}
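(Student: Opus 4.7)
The strategy is the one from Kazhdan--Lusztig \cite{KLSchubert}: interpret the Poincar\'e polynomials $p_{x,y}(q) := \sum_{i} \dim H^i(\ic(\overline{ByB/B}, \QM)_{xB/B}) q^{i/2}$ as coefficients of a canonical basis element of the Iwahori--Hecke algebra $\HC$ of $S_{n+1}$, and then invoke uniqueness of the Kazhdan--Lusztig basis $\underline{H}_y$. The plan is to first match $p_{x,y}$ and $P_{x,y}$ inside $\HC$ via a geometric categorification, and then to deduce parity vanishing from purity.

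\emph{Step 1 (Bott--Samelson and the decomposition theorem).} Fix a reduced expression $\underline{y} = s_{i_1}\cdots s_{i_\ell}$ for $y$ and consider the Bott--Samelson resolution $\pi : \Bs(\underline{y}) \to \overline{ByB/B}$. Since $\Bs(\underline{y})$ is an iterated $\mathbb{P}^1$-bundle, it is smooth and projective, so $\pi_* \QM[\ell(y)]$ is pure of weight zero. The decomposition theorem for pure Hodge modules (cf.\ \S\ref{sec:geo background}) then gives
\[
\pi_* \QM[\ell(y)] \;\cong\; \bigoplus_{w \le y,\, k} \ic(\overline{BwB/B},\QM)[\ell(w) - k]^{\oplus m_{w,k}}.
\]
In the Grothendieck group of $B$-equivariant semisimple complexes on $G/B$, identified with $\HC$ via convolution (Soergel's categorification), the class $[\pi_* \QM[\ell(y)]]$ corresponds to the product $\underline{H}_{s_{i_1}} \cdots \underline{H}_{s_{i_\ell}}$ of simple Kazhdan--Lusztig generators.

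\emph{Step 2 (Matching bases and parity vanishing).} I would then induct on $\ell(y)$. For all $w < y$, the class of $\ic(\overline{BwB/B},\QM)$ already corresponds to $\underline{H}_w$ by induction; unraveling the standard recursion $\underline{H}_{s_{i_1}}\cdots\underline{H}_{s_{i_\ell}} = \underline{H}_y + \sum_{w<y} c_w \underline{H}_w$ forces $[\ic(\overline{ByB/B},\QM)] \leftrightarrow \underline{H}_y$. Taking the stalk at $xB/B$ extracts the coefficient of $H_x$ in $\underline{H}_y$, which is $P_{x,y}$ after tracking the normalization of $\ic$. For parity vanishing, the fiber $\pi^{-1}(xB/B)$ admits a Bialynicki--Birula cell decomposition from a generic cocharacter of $T$, so the stalk $(\pi_*\QM)_{xB/B}$ is concentrated in even degrees; the decomposition of Step 1 together with the inductive hypothesis then forces each summand, in particular $\ic(\overline{ByB/B},\QM)_{xB/B}$, to have the same parity. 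Alternatively, the ``attractive weight argument'' of \S\ref{sec:tools} applied to a generic $\CM^*$-action contracting a neighborhood of $xB/B$ onto $xB/B$ shows that $i_x^*\ic$ is pure of weight zero, and purity combined with the Hecke-algebra recursion rules out odd cohomology.

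The principal obstacle is Step 1: the careful bookkeeping of half-integer shifts and weights needed to identify the geometric Grothendieck group with $\HC$ on the nose, and to guarantee that the top summand $\ic(\overline{ByB/B})$ appears with multiplicity exactly one in $\pi_*\QM[\ell(y)]$. This is the content of Soergel's categorification theorem, whose verification rests on the hard Lefschetz theorem for pure complexes together with the triangularity properties of the Bott--Samelson recursion. Once the categorification is in place, the matching of coefficients in Step 2 and the parity vanishing are largely formal consequences of purity and induction.
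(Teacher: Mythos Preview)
The paper does not prove this theorem at all: it is stated as the ``fundamental result of Kazhdan and Lusztig \cite{KLSchubert}'' and simply quoted from the literature, then immediately used (via the normal slice) to deduce Proposition~\ref{prop:kl}. So there is nothing in the paper to compare your argument against.

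Your sketch is a reasonable outline of a modern proof, closer in spirit to the Springer/Soergel treatment than to the original argument in \cite{KLSchubert}. Kazhdan and Lusztig worked in \'etale cohomology over a finite field and used the Grothendieck--Lefschetz trace formula together with Deligne's purity theorem to show that the alternating sum $\sum (-1)^i \dim H^i(\ic_{xB/B}) q^{i/2}$ satisfies the defining recursion of $P_{x,y}$; parity vanishing then follows because the stalks are pure and Frobenius eigenvalues on a pure weight-zero complex in odd degree would have to be non-real. Your approach replaces the trace formula by the decomposition theorem applied to a Bott--Samelson resolution and extracts the same Hecke-algebra recursion from the Grothendieck group of semisimple complexes. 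Both routes ultimately rest on purity; yours trades the arithmetic input for the decomposition theorem, which is arguably cleaner but logically no lighter. One small caution: in Step~2 you should be explicit that the multiplicity of $\ic(\overline{ByB/B})$ in $\pi_*\QM[\ell(y)]$ is exactly one because $\pi$ is birational, not because of hard Lefschetz; the latter is not needed for this particular statement.
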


 Let $U^-$ denote lower unitriangular matrices.
 For any $x \in S_{n+1}$ the product
 \[
BxB/B \times U^-xB/B
\]
is isomorphic to a $T$-stable affine neighbourhood of $x$ in
$G/B$. Similarly, 
\[
BxB/B \times (U^-xB/B \cap \overline{ByB/B})
\]
is isomorphic to a $T$-stable affine neighbourhood of $x$ in
$\overline{ByB/B}$. Fundamental to everything below will be the space
\[
S_{x,y} = U^-xB/B \cap \overline{ByB/B}
\]
which is a normal slice to the $B$-orbit through $xB/B$ in
$\overline{ByB/B}$. We denote by point $xB/B$ in $S_{x,y}$ simply by
$x$. Because $S_{x,y}$ is a normal slice, we deduce the
following from Theorem \ref{thm:kl}:

 \begin{prop} \label{prop:kl}
   The cohomology sheaves $H^i(\ic(S_{x,y}, \QM)_x)$ vanish
   in odd degree, moreover their Poincar\'e polynomial is given by the
   Kazhdan-Lusztig polynomial:
   \[
P_{x,y} = \sum_{i \ge 0} H^{2i}(\ic(S_{x,y}, \QM)_x) q^i.
     \]
   \end{prop}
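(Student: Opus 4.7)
My plan is to reduce the statement directly to Theorem \ref{thm:kl} by exploiting the local product structure of $\overline{ByB/B}$ at $xB/B$. The paragraph preceding the proposition already records that a $T$-stable affine open neighbourhood $U$ of $xB/B$ in $\overline{ByB/B}$ is isomorphic to $BxB/B \times S_{x,y}$, with $xB/B$ corresponding to the pair $(xB/B, x)$; I would take this isomorphism as the starting point.

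The main step is to compare the stalks at $xB/B$ of $\ic(\overline{ByB/B}, \QM)$ and at $x$ of $\ic(S_{x,y},\QM)$. Because $BxB/B$ is isomorphic to an affine space and hence smooth, with our unshifted normalization its IC complex is simply the constant sheaf $\QM_{BxB/B}$. The IC complex is a local invariant, and the external product of IC complexes is the IC complex of the product, so one obtains a canonical isomorphism
\[
\ic(\overline{ByB/B}, \QM)|_U \;\simeq\; \pi_1^\ast \QM_{BxB/B} \otimes \pi_2^\ast \ic(S_{x,y}, \QM),
\]
where $\pi_1, \pi_2$ are the two projections. Taking stalks at $(xB/B, x)$ then yields the desired canonical isomorphism
\[
\ic(\overline{ByB/B}, \QM)_{xB/B} \;\simeq\; \ic(S_{x,y}, \QM)_x.
\]

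With this isomorphism in hand I would invoke Theorem \ref{thm:kl}: the odd-degree vanishing of the stalk cohomology and the identification of its Poincar\'e polynomial with $P_{x,y}$ both transfer through the isomorphism without any further work. The only point to watch is the normalization convention: since we take IC to equal the constant sheaf without shift on the smooth locus, the stalk sits in nonnegative even degrees rather than symmetrically about zero, which is exactly what makes the formula $P_{x,y} = \sum_{i \ge 0} \dim H^{2i}(\ic(S_{x,y},\QM)_x) q^i$ come out correctly. The main obstacle, such as it is, lies entirely in this bookkeeping with normalizations; there is no substantive geometric difficulty once the product decomposition of $U$ is in hand.
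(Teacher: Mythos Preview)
Your proposal is correct and matches the paper's approach exactly: the paper simply says ``Because $S_{x,y}$ is a normal slice, we deduce the following from Theorem~\ref{thm:kl}'', and your argument via the product decomposition $BxB/B \times S_{x,y}$ and the multiplicativity of IC under external products is precisely what that sentence unpacks. Your remark about normalizations also mirrors the paper's own Remark following the proposition.
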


   \begin{remark} There is no shift in the gradings between Theorem
     \ref{thm:kl} and Proposition \ref{prop:kl} because of our normalizations of $\ic$. \end{remark}

  \subsection{Equivariance and the fundamental example} Recall that $T$ denotes
  the maximal torus inside $GL_{n+1}$ consisting of diagonal
  matrices. Every $T$-fixed point $xB/B$ on $G/B$ is \emph{attractive} in
  the sense that there exists a one-parameter subgroup $\lambda: \CM^* \to T$
  in $T$ such that
  \[
\lim_{z \to 0} \lambda(z) \cdot v = xB/B
\]
for all $v$ in some neighbourhood of $xB/B$. (Equivalently and more
algebraically: there exists $\lambda$ as above which acts with
positive weights on functions on a $T$-stable affine neighbourhood of
$xB/B$.) It follows that $x \in S_{x,y}$ is attractive and that (with $\lambda$
as above)
\[
\lim_{z \to 0} \lambda(z) \cdot v = x
\]
for all $v \in S_{x,y}$.

Because $\lambda$ acts with positive weights the geometric
quotient
\[
\PM_\lambda \dot{S}_{x,y} := \dot{S}_{x,y} /_\lambda \CM^*
\]
exists and is a projective variety. Here ``$/_\lambda$'' means that we consider the quotient by the
$\CM^*$-action given by $\lambda$. (One may establish the existence
of this quotient by choosing a linear embedding $S_{x,y}$ inside an
affine space $\CM^n$ with linear $T$-action. Then $(\CM^n \setminus \{
0 \})/_\lambda \CM^*$ exists, and is isomorphic to a weighted
projective space. The chosen embedding $S_{x,y} \subset \CM^n$ then
provides a closed embedding of $\PM_\lambda \dot{S}_{x,y}$ inside a
weighted projective space.)

\begin{prop} \label{prop:qkl}
  The intersection cohomology groups $IH^i( \PM_\lambda \dot{S}_{x,y}, \QM))$ vanish
   in odd degree, moreover their Poincar\'e polynomial is given by the
   $q$-derivative of the Kahzdan-Lusztig polynomial:
   \[
\qKL_{x,y} := \sum_{i \ge 0} IH^{2i}( \PM_\lambda S_{x,y}, \QM))q^i.
     \]
   \end{prop}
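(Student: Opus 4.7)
Set $d=\ell(y)-\ell(x)=\dim_{\CM}S_{x,y}$, $\dot S := \dot S_{x,y}$, and $Y := \PM_\lambda\dot S$. The strategy is to realize $IH^\bullet(Y,\QM)$ as a $\CM^*$-equivariant intersection cohomology group on $\dot S$, and then to compute it via the equivariant long exact sequence of the pair $(S_{x,y},\dot S)$, whose other two terms are controlled by the attractive proposition and Proposition~\ref{prop:kl}.

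First I would invoke the finite stabilizer argument of \S\ref{sec:tools}(3). Since $\lambda$ acts with strictly positive weights, $x$ is the only $\CM^*$-fixed point in $S_{x,y}$, so $\CM^*$ acts on $\dot S$ with finite stabilizers. The quotient equivalence then yields
\[
IH^\bullet(Y,\QM)\;\cong\;H^\bullet_{\CM^*}(\dot S,\ic(\dot S)).
\]
Writing $i:\{x\}\hookrightarrow S_{x,y}$ and $j:\dot S\hookrightarrow S_{x,y}$, the equivariant long exact sequence attached to $i_*i^!\ic\to\ic\to j_*\ic\to$ reads
\[
\cdots\to H^k_{\CM^*}(\{x\},i^!\ic)\to IH^k_{\CM^*}(S_{x,y})\to IH^k_{\CM^*}(\dot S)\to H^{k+1}_{\CM^*}(\{x\},i^!\ic)\to\cdots.
\]
Next I would apply the attractive proposition \S\ref{sec:tools}(1) equivariantly to $\ic$ (giving $p_*\ic\simto i^*\ic$) and to its Verdier dual (giving $i^!\ic\simto p_!\ic$). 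Passing to equivariant global sections, both outer terms become free $\QM[u]$-modules, where $u\in H^2_{\CM^*}(\mathrm{pt})$ is the generator:
\[
IH^\bullet_{\CM^*}(S_{x,y})=\mathcal{H}^\bullet(\ic)_x\otimes_\QM\QM[u],\qquad H^\bullet_{\CM^*}(\{x\},i^!\ic)=\mathcal{H}^\bullet(i^!\ic)_x\otimes_\QM\QM[u].
\]
From $\mathbb{D}\ic(S_{x,y})=\ic(S_{x,y})[2d]$ one reads $\mathcal{H}^k(i^!\ic)_x=\mathcal{H}^{2d-k}(\ic)_x^\vee$. Thus by Proposition~\ref{prop:kl} both terms are concentrated in even degrees, with Poincar\'e series $P_{x,y}(q)/(1-q)$ and $q^d P_{x,y}(q^{-1})/(1-q)$ respectively.

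The crucial step is to show that the connecting morphisms in the long exact sequence vanish, equivalently that the map $H^\bullet_{\CM^*}(\{x\},i^!\ic)\to IH^\bullet_{\CM^*}(S_{x,y})$ is injective. Since both sides are free $\QM[u]$-modules, it suffices to verify injectivity after inverting $u$. Borel localization, applied to the constructible $\CM^*$-equivariant complex $\ic(S_{x,y})$ and the fixed locus $S_{x,y}^{\CM^*}=\{x\}$, identifies each side after localization with $\mathcal{H}^\bullet(\ic)_x\otimes\QM[u,u^{-1}]$, with the map becoming the comparison $i^!\ic\to i^*\ic$ at $x$. On the fixed point this is multiplication by the equivariant Euler class of the normal directions, which is invertible after inverting $u$; so the localized map is an isomorphism, and torsion-freeness of the source forces the unlocalized map to be injective. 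The long exact sequence therefore splits into short exact sequences, and taking Poincar\'e polynomials yields
\[
\mathrm{Poin}(IH^\bullet(Y))=\frac{P_{x,y}(q)-q^d P_{x,y}(q^{-1})}{1-q}=\qKL_{x,y}(q),
\]
while odd-degree vanishing is immediate from the even concentration of the outer terms.

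The main obstacle is this injectivity/localization step: one must check that the attractive proposition upgrades cleanly to the equivariant setting with the claimed free $\QM[u]$-module structures (which relies on formality of $i^*\ic$, guaranteed by its even-degree concentration), and that Borel localization combined with the Euler-class comparison really produces an isomorphism after inverting $u$. An alternative route, using the attractive weight argument \S\ref{sec:tools}(2) to exhibit both $i^*\ic$ and $i^!\ic$ as pure of weight zero at $x$, would let one deduce the vanishing of the connecting maps from weight considerations on a mixed Hodge module; either packaging ultimately rests on the same purity/formality input.
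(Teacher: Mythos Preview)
Your argument follows the same skeleton as the paper's: the same open/closed triangle on $S_{x,y}$, the same use of the Attractive Proposition and the Finite Stabilizer Argument to identify the three terms, and the same Poincar\'e-polynomial bookkeeping at the end. The one substantive difference is in how the connecting maps are shown to vanish. The paper proceeds entirely by weights: it checks that $H^\bullet_{\CM^*}(\{x\},i^!\ic)$, $H^\bullet_{\CM^*}(\{x\},i^*\ic)$ and $IH^\bullet(\PM_\lambda\dot S_{x,y},\QM)$ are each pure of weight zero---the first two via the Attractive Weight Argument, the last because it is the intersection cohomology of a projective variety---and concludes that the connecting homomorphism vanishes for weight reasons.

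Your localization route also works, but the phrase ``equivariant Euler class of the normal directions'' is not quite the right justification: $S_{x,y}$ is typically singular at $x$, so there is no normal bundle whose Euler class one can invoke. The argument is cleaner without it. You have already identified $H^\bullet_{\CM^*}(\dot S_{x,y},\ic)$ with $IH^\bullet(\PM_\lambda\dot S_{x,y},\QM)$, which is finite-dimensional over $\QM$ and hence $u$-torsion; inverting $u$ in the long exact sequence therefore annihilates this term and forces $H^\bullet_{\CM^*}(\{x\},i^!\ic)\to H^\bullet_{\CM^*}(S_{x,y},\ic)$ to become an isomorphism, whence injectivity before localization by torsion-freeness of the source. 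This bypasses mixed Hodge modules and is in that sense more elementary; the paper's purity argument, by contrast, establishes the template that is reused throughout the later sections. The weight alternative you sketch at the end is exactly what the paper does.
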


   \begin{remark} The following proof is standard in Geometric
     Representation Theory. For a detailed discussion of closely related
     ideas, the reader might enjoy
     \cite[\S 14]{BLu}.\end{remark}

\begin{proof} 
Note that $\ic(S_{x,y},\QM)$ is equivariant for $T$, and hence may be
regarded as an element the equivariant derived category
$D^b_{\CM^*}(S_{x,y},\QM)$. In this proof we always regard $S_{x,y}$ as
a $\CM^*$-variety via the action induced from $\lambda : \CM^* \to T$.

Consider the closed/open decomposition
  \[
\{ x \} \stackrel{i}{\longrightarrow} S_{x,y}
\stackrel{j}{\longleftarrow} \dot{S}_{x,y}
\]
where $\dot{S}_{x,y} = S_{x,y} - \{ x \}$. This gives us a distinguished triangle
\[
i_!i^! \ic(S_{x,y},\QM) \to \ic(S_{x,y},\QM) \to
j_*j^*\ic(S_{x,y},\QM) \triright
\]
Taking global sections (i.e. hypercohomology) we get a long exact
sequence. We claim that all terms are pure of the same weight, and
hence that the connecting homomorphism is zero. Indeed:
\begin{enumerate}
\item Consider the projection $p : S_{x,y} \to \{x \}$. We have\[
    H^\bullet_{\CM^*}(\{x\}, p_* \ic(S_{x,y},\QM))   =
    H^\bullet_{\CM^*}(S_{x,y},  \ic(S_{x,y},\QM)) \]
  by definition, and
  \[H^\bullet_{\CM^*}(S_{x,y},  \ic(S_{x,y},\QM)) = H^\bullet_{\CM^*}(
    \{ x \}, i^*\ic(S_{x,y},\QM))\]by the Attractive Proposition. By
  the Attractive Weight Argument,
\[
  H^\bullet_{\CM^*}(S_{x,y},
  \ic(S_{x,y},\QM))\]is pure of weight zero as claimed.
\item By the Attractive Proposition again we have
  \[
    H^\bullet_{\CM^*}(\{x\}, p_! \ic(S_{x,y},\QM))    = H^\bullet_{\CM^*}(
  \{ x \}, i^!\ic(S_{x,y},\QM))\] and the purity of both sides follows
by the Attractive Weight Argument again.
\item Because $j^*$ is an open inclusion we have $j^*\ic(S_{x,y},\QM)
  = \ic(\dot{S}_{x,y},\QM)$. Because  $\CM^*$ acts with finite stabilizers and our
  coefficients are $\QM$ we have
  \[
    H^\bullet_{\CM^*}(\dot{S}_{x,y}, \ic(\dot{S}_{x,y},\QM)) =
    IH^\bullet( \PM_\lambda \dot{S}_{x,y}, \QM).
  \]
  The latter is pure of weight zero, as the intersection cohomology of a projective
  variety.
\end{enumerate}

Thus we have a short exact sequence
\begin{equation}
  \label{eq:gr_rk}
H^\bullet_{\CM^*}(
\{ x \}, i^!\ic(S_{x,y},\QM)) \into H^\bullet_{\CM^*}(\{x\},
  i^*\ic(S_{x,y},\QM)) \onto IH^\bullet( \PM_\lambda \dot{S}_{x,y}, \QM).  
\end{equation}
A similar argument to the one given above using the attractive
proposition shows that non-equivariant cohomology groups
\[
H^\bullet(
\{ x \}, i^!\ic(S_{x,y},\QM)) \quad \text{(resp.} \quad H^\bullet(
\{ x \}, i^*\ic(S_{x,y},\QM)) ) \]
are pure. Moreover, their Poincar\'e polynomials are given by
\[
q^{\ell(y) - \ell(x)} \KL_{x,y}(q^{-1}) \quad \text{(resp.} \quad \KL_{x,y}).
\]
(The second is simply a restatement of Proposition \ref{prop:kl}, the
first follows from it by Verdier duality.) It follows that the equivariant
cohomology groups are free over $H^*_{\CM^*}(pt,\QM)$ of graded ranks
given by
\[
q^{\ell(y) - \ell(x)} \KL_{x,y}(q^{-1})/(1-q) \quad \text{(resp.} \quad \KL_{x,y}/(1-q)).
\]
We deduce that the Poincar\'e polynomial of the last term in
\eqref{eq:gr_rk} is given by
\begin{gather*}
\KL_{x,y}/(1-q) - q^{\ell(y) - \ell(x)} \KL_{x,y}(q^{-1})/(1-q)  
= \qKL_{x,y}  \end{gather*}
which is what we wanted to show.
\end{proof}

\begin{ex} \label{ex:running} Consider the first singular Schubert variety $y =
  (2,3,0,1)$ and $x = (0,2,1,3)$. We will keep this as a running
  example throughout.
  One can compute directly (or wait until we discuss this example in
  more detail in Example \ref{ex:running3})
  that $S_{x,y}$ may be identified with the following subvariety of
  $4\times 4$-matrices:
  \begin{equation*}
  S_{x,y} = \left \{ \left ( \begin{matrix}
    1 & 0 & 0 & 0 \\ x & 0 & 1 & 0 \\ y & 1 & 0 & 0 \\ 0 & z & w &
    1 \end{matrix} \right ) \; \middle | \; xw-yz = 0 \right \}
\end{equation*}
We can choose $\lambda$ such that the attractive $\CM^*$-action simply
scales $x, y, z$ and $w$. With this choice of $\lambda$ the quotient
$\dot{S}_{x,y}/_\lambda \CM^*$ is simply
\[
\{ [x:y:z:w] \in \PM^3 \; | \; xz-yw = 0 \} \cong \PM^1 \times \PM^1.
  \]
This is smooth, thus its intersection cohomology and cohomology agree,
and its Poincar\'e polynomial is $1 + 2q + q^2 = \qKL_{x,y}$.
\end{ex}

\subsection{Where does the formula come from?} \label{sec:sketch}
We are now in a position to describe our formula in more detail. Here
we give an outline of the argument.

In the previous section we saw that the $q$-derivative of the
Kazhdan-Lusztig polynomial $\qKL_{x,y}$ computes the Poincar\'e
polynomial of the intersection cohomology of the projective variety
$\PM_\lambda \dot{S}_{x,y}$. Now $S_{x,y}$ has a natural
stratification via intersections with Schubert cells:
\[
S_{x,y} = \bigsqcup_{u \in [x,y]} S_{x,y}^u \quad \text{where
  $S_{x,y}^u = S_{x,y} \cap BuB/B$.}
\]
We will refer to this stratification as the \emph{Schubert
  stratification} of $S_{x,y}$. The Schubert stratification is
$T$-invariant and induces a
stratification of the quotient
\begin{equation} \label{eq:strat}
  \PM_\lambda \dot{S}_{x,y} = \dot{S}_{x,y}/_\lambda \CM^*= \bigsqcup_{u \in (x,y]} S_{x,y}^u/_\lambda \CM^*
\end{equation}
(Note that $[x,y]$ has been replaced by $(x,y]$ because the point $\{
x \} = S_{x,y}^x = S_{x,y} \cap BxB/B$ has been removed in
$\dot{S}_{x,y}$.)

The pullback of the IC sheaf on $\PM_\lambda
\dot{S}_{x,y}$ to $\dot{S}_{x,y}$ agrees with the IC sheaf on
$\dot{S}_{x,y}$, which in turn agrees with the restriction of the IC
sheaf on $\overline{ByB/B}$, because $S_{x,y}$ is a normal slice. In particular, all the Poincar\'e
polynomials of the stalks of the IC sheaf on $\PM_\lambda
\dot{S}_{x,y}$ are known. (To spell things out: the Poincar\'e
polynomial of $\ic( \PM_\lambda
\dot{S}_{x,y}, \QM)$ along the stratum $S_{x,y}^u/_\lambda \CM^*$ is
given by $P_{u,y}$.) Thus it is reasonable, perhaps, that we can
compute the Poincar\'e polynomial of the global sections of  $\ic( \PM_\lambda
\dot{S}_{x,y}, \QM)$ via some kind of long exact sequence.

We will define an open/closed decomposition
\begin{equation} \label{eq:openshut}
Z \stackrel{i}{\into} \PM_\lambda \dot{S}_{x,y}  \stackrel{j}{\hookleftarrow} U
\end{equation}
with $U$ open and $Z$ closed. We have an associated long exact
sequence:
\[
\to H^\bullet( Z, i^! \ic) \to H^\bullet(\PM_\lambda \dot{S}_{x,y}, \ic)
\to H^\bullet(U, j^*\ic) \to
\]
(where we have abbreviated $\ic := \ic(\PM_\lambda \dot{S}_{x,y},
\QM)$). The decomposition \eqref{eq:openshut} has several very
favourable properties:
\begin{enumerate}
\item $Z$ is isomorphic to a weighted projective space, the 
  stratification induced by \eqref{eq:strat} coursens its toric
  stratification\footnote{See \S\ref{sec:weightedproj} for the
    definition of ``toric stratification''}, and $i^! \ic$ is pure;
  \item $U$ retracts $T$-equivariantly onto a space $Z'$ isomorphic to $\PM_\lambda \dot{S}_{x,c}$,
where $c$ denotes the maximal element in $[x,y]$ in the $S_n$-coset of
$x$, and the restriction of $\ic$ to $Z'$ is pure.
\end{enumerate}

By the Attractive Proposition our long exact sequence can be
rewritten
\begin{equation} \label{eq:ses-sketch}
\to H^\bullet( Z, i^! \ic) \to H^\bullet(\PM_\lambda \dot{S}_{x,y}, \ic)
\to H^\bullet(U, (i')^*\ic) \to
\end{equation}
where $i' : Z' \into \PM_\lambda \dot{S}_{x,y}$ denotes the
inclusion. Because everything is pure of weight zero, we conclude that our sequence is in fact short
exact. Thus, taking Poincar\'e polynomials (and using that the
$q$-derivative of the KL polynomials is the Poincar\'e polynomial of
the middle term) we deduce
\[
\qKL_{x,y} = Q + I
  \]
where $Q$ (resp. $I$) is the Poincar\'e polynomial of the left-most
(resp. right-most) term in \eqref{eq:ses-sketch}. We can compute $Q$ via a simple
formula (yielding the hypercube piece), and $I$ is computed by
induction (yielding the inductive piece).

\begin{ex} \label{ex:running2}
  We continue our running example. We have seen that
  $\dot{S}_{x,y}/_\lambda \CM^*$ is isomorphic to $\PM^1 \times
  \PM^1$. The torus $T/\lambda(\CM^*) \cong \CM^* \times \CM^*$ still
  acts on this quotient, and one may check that this is the standard action, i.e.
  \[
    (z,z') \cdot ([u:v],[u':v']) = ([u:zv],[u':z'v']).
  \]
  There are $4 + 4 + 1 = 9$ orbits of $(\CM^*)^2$ on $\PM^1 \times
  \PM^1$, and these orbits coincide with the image of the Schubert
  stratification. (It is nice to notice that the closure patterns of these
  orbits nicely match the vertices $\ne x$ in the interval $[x,y]$:
  \begin{gather}
   \begin{array}{c}
 \includegraphics[scale=0.2]{pics/4crown}
\end{array}
\end{gather}
Thus the four vertices are height 1 correspond to the 4-fixed points
on $\PM^1 \times \PM^1$, the four vertices at height 2 correspond to
the $4$ invariant $\CM^*$'s, and the unique vertex at height 3 corresponds
to the open orbit.)

Our decomposition has $Z$ equal to a torus invariant
$\PM^1 \subset \PM^1 \times \PM^1$, and $U$ its
complement, which retracts equivariantly onto the ``opposite'' $\PM^1$
which yields $Z'$. The short exact sequence
\[
0 \to H^*(Z, i^!\QM_{\PM^1\times \PM^1}) \to H^\bullet(\PM^1 \times
\PM^1, \QM) \to H^*(Z', i^*\QM_{\PM^1\times \PM^1})  \to 0
\]
gives the unsurprising identity
\[
1 + 2q + q^2 = (q + q^2 ) + (1 + q)
\]
coming from the Poincar\'e polynomial of the left hand term
(resp. right hand term) in the short exact sequence.
\end{ex}

\subsection{Slices and their subvarieties in the flag variety} \label{sec:slices}
Our
eventual goal is to make the sketch 
provided in the previous section precise. We will need an explicit description of the slice
$S_{x,y}$ in the case of $\GL_{n+1}$. Most of this is rather
standard, for more information, see \cite{Fulton}, \cite[\S 3.2]{WY1} and \cite[\S 2.2]{WY2}. The experienced
reader can probably skim over this subsection and the next.

Recall that $B \subset \GL_{n+1}(\CM)$ denotes the subgroup of upper
triangular matrices, $U_-$ denotes the subgroup of lower
uni-triangular matrices and that we identify permutations in
$S_{n+1}$ with the corresponding permutation matrices in
$\GL_{n+1}(\CM)$.

Because the multiplication $U_- \times B \to \GL_n(\CM)$ is an open
immersion, $U_-$ provides an open neighbourhood of $B/B$ in $G/B$. It
follows that $xU_-$ provides an open neighbourhood of the point $xB/B$
in $G/B$. We have
\[
xU_- = \{ (\gamma_{i,j}) \; | \; \gamma_{x(i),i} = 1, \gamma_{x(i),j}
= 0 \text{ if $j > x(i)$.} \}
\]
That is, ``zero if right of a 1''. Here is a picture for $x = (1,0,3,2) \in
S_4$:
\begin{equation*}
  xU_- = \left ( \begin{matrix}
    * & 1 & 0 & 0 \\ 1 & 0 & 0 & 0 \\ * & * & * & 1 \\ * & * & 1 &
    0 \end{matrix} \right )
\end{equation*}
In this chart, the $B$-orbit through $xB/B$ consists of ``zero if not
above a 1''.

In particular, a normal slice to the $B$-orbit through $xB/B$ is given by 
\begin{align*}
  S_x &=\{ (\gamma_{i,j}) \; | \; \gamma_{x(i),i} = 1, \gamma_{i,j} = 0
  \text{ if $j > x(i)$ or $i < x^{-1}(j)$} \}
\end{align*}
That is, ``zero if right or above a 1''. 
A picture for $x = (1,0,3,2) \in
S_4$:
\begin{equation*}
  S_x = \left ( \begin{matrix}
      0 & 1 & 0 & 0 \\ 1 & 0 & 0 & 0 \\ * & * & 0 & 1 \\ * & * & 1 &
    0 \end{matrix} \right )
\end{equation*}

Throughout an important role will be played by
\[
  m := x^{-1}(0).
\]
This is the column in which the 1 occurs in the top row in $x$. Consider the subvarieties
\begin{gather*}
S^H_x = \{ \gamma \in S_x \; | \; \gamma_{i,j} = 0 \text{ unless $(i,j)
  = (x(j),j)$ or $j = m$} \}, \\
S^I_x = \{ \gamma \in S_x \; | \; \gamma_{i,j} = 0 \text{ unless $(i,j)
  = (x(j),j)$ or $j \ne m$} \}.
\end{gather*}
That is, ``zero if not in column number $m$'' and ``zero if in
column number $m$'' respectively. A picture for $x = 1032 \in
S_4$ (so $m = 1$):
\begin{equation*}
  S^H_x = \left ( \begin{matrix}
      0 & 1 & 0 & 0 \\ 1 & 0 & 0 & 0 \\ 0 & * & 0 & 1 \\ 0 & * & 1 &
      0 \end{matrix} \right ) \quad \text{and} \quad
    S^I_x = \left ( \begin{matrix}
      0 & 1 & 0 & 0 \\ 1 & 0 & 0 & 0 \\ * & 0 & 0 & 1 \\ * & 0 & 1 &
    0 \end{matrix} \right ) 
\end{equation*}



\subsection{Equations defining slices to Schubert varieties} \label{sec:equations}

Let $g \in G = \GL_{n+1}(\CM)$ be a matrix.  By a \emph{lower left corner} we
mean the submatrix:
\[
g^{\le (p,q)} = \{ (g_{ij})_{i \ge p, j \le q} \}
\]
Given a matrix, its \emph{corner rank matrix} is the matrix which
at position $(p,q)$ records the rank of $g^{\le (p,q)}$. These matrices will
be particularly important for permutation matrices. For example, if $y = (1,2,3,0)$ then
\[
y = \left ( \begin{matrix} 0 & 0 & 0 & 1 \\ 1 & 0 & 0 & 0 \\ 0 &
    1 & 0 & 0 \\ 0 & 0 & 1 & 0 \end{matrix} \right ) \quad \text{and}
\quad \text{corner rank matrix} = \left ( \begin{matrix} 1 & 2 & 3 & 4 \\ 1 & 2 & 3 & 3 \\ 0 &
    1 & 2 & 2 \\ 0 & 0 & 1 & 1 \end{matrix} \right ) 
  \]

  Suppose that $g'$ is obtained from $g$ by a scaling rows or columns, upwards row operation,
or a rightwards column operation, then the rank of any lower left
corner remains unchanged. In particular, any matrix $g \in ByB$
satisfies
\[
\rank( g^{\le (p,q)}) = \rank( \dot{y}^{\le (p,q)}) =
\text{$(p,q)^{th}$ entry in corner rank matrix of $\dot{y}$}
\]
for all $p,q$. It is not difficult to see that $g \in ByB$ if and only if
these conditions are met.

 In fact, $\overline{ByB/B}$ is cut out by the equations:
 \begin{equation} \label{eq:Schubeq}
\rank(g^{\le (p,q)}) \le \rank( \dot{y}^{\le(p,q)}) \quad \text{ for all $p,q$}.
\end{equation}
Intersecting with the slices $S_x, S_x^H$ and $S_x^I$ defines
subvarieties
\begin{gather*}
  S_{x,y} := S_x \cap \overline{ByB/B}, \\
  S_{x,y}^H := S^H_x \cap \overline{ByB/B}, \\
  S^I_{x,y} := S^I_x \cap \overline{ByB/B}
\end{gather*}
which are cut out of the respective affine spaces by rank conditions.

\begin{ex} \label{ex:running3}
  We continue our running example (see Examples \ref{ex:running} and
  Example \ref{ex:running2}). The corner rank matrix $y = (2,3,0,1)$
  is
\[
\dot{y} = \left ( \begin{matrix} 0 & 0 & 1 & 0 \\ 0 & 0 & 0 & 1 \\ 1 &
    0 & 0 & 0 \\ 0 & 1 & 0 & 0 \end{matrix} \right ) \quad \text{and}
\quad \text{corner rank matrix} = \left ( \begin{matrix} 1 & 2 & 3 & 4 \\ 1 & 2 & 2 & 3 \\ 1 &
    2 & 2 & 2 \\ 0 & 1 & 1 & 1 \end{matrix} \right ) 
\]
Recall that $x = (0,2,1,3)$ and hence the slice $S_x$ is:
\[
  S_x = \left ( \begin{matrix}
      1 & 0 & 0 & 0 \\ a & 0 & 1 & 0 \\ b & 1 & 0 & 0 \\ c & d & e & 1
   \end{matrix} \right )
\]
In this case, the equations cutting out $S_{x,y} \subset S_x$ reduce
  to $\rank g^{\le(3,0)} \le 0$, i.e. $c = 0$, and $\rank g^{\le(1,2)}
  = 1$, i.e. $ae + bd = 0$. This recovers the description claimed in
  Example \ref{ex:running}.
\end{ex}

 \subsection{Decomposition of the projectivized slice} \label{sec:decomp}
 In this section we make the outline in \S \ref{sec:sketch} precise. In particular, we define the
 subvarieties $U$, $Z$ and $Z'$. As in 
 \S \ref{sec:sketch} we denote
 \[
\ic := \ic( \PM_\lambda \dot{S}_{x,y}, \QM).
   \]

Given a $T$-stable closed subvariety $X
 \subset \dot{S}_{x,y}$ we denote by $\PM_\lambda \dot{X}$ the closed
 subvariety obtained as the image  of $\dot{X} := X - \{ x \}$ in
 $\PM_\lambda \dot{S}_{x,y}$. It is a closed subvariety. Set
 \begin{align*}
   Z := \PM_\lambda \dot{S}_{x,y}^H, \quad 
   U := \PM_\lambda \dot{S}_{x,y}- \PM_\lambda \dot{S}_{x,y}^H, \quad
   Z' := \PM_\lambda \dot{S}^I_{x,y}
 \end{align*}
and denote by $i, j, i'$ the inclusions of  $Z, U, Z'$ into $\PM_\lambda \dot{S}_{x,y}$.

 Let
 \[
L = \{ z \in [x,y] \;| \;z^{-1}(0) = x^{-1}(0) \} \subset [x,y].
   \]
In order to carry out
 the program outlined in \S \ref{sec:sketch} we need to check the
 following statements:

 \begin{thm} \label{thm:inductive}
   $U$ retracts $T$-equivariantly onto $Z'$, $(i')^*\ic$ is pure, and
   the Poincar\'e polynomial of $H^\bullet(Z', (i')^*\ic)$ is given by $I_{x,y,L}$.
 \end{thm}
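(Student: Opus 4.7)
The plan is to construct an auxiliary $\CM^*$-action (via a one-parameter subgroup of $T$) that retracts $S_{x,y}$ onto $S_{x,y}^I$, descend it to an equivariant retraction $p : U \to Z'$, use the Attractive Weight Argument to deduce purity of $(i')^*\ic$, and then identify the Poincar\'e polynomial by decomposing $(i')^*\ic$ into IC summands whose multiplicities match the Kazhdan-Lusztig basis coefficients $\gamma_v$.

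First I would choose a one-parameter subgroup $\mu : \CM^* \to T$ acting with strictly positive weights on the column-$m$ coordinates of $S_x$ (those spanning $S_x^H$, where $m = x^{-1}(0)$) and with zero weight on the remaining coordinates (those spanning $S_x^I$); the two sets of $T$-weights are disjoint, so such a $\mu$ exists inside $T$. The fixed locus of $\mu$ on $S_x$ is $S_x^I$, and $v \mapsto \lim_{t\to 0} \mu(t) \cdot v$ gives a retraction $S_x \to S_x^I$; since $\mu \subset T$ preserves $\overline{ByB/B}$, this restricts to a retraction $S_{x,y} \to S_{x,y}^I$ (the limit remains in the Schubert variety by closedness). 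Because $T$ is abelian, $\mu$ commutes with $\lambda$ and descends to $\PM_\lambda \dot S_{x,y}$; a point in $U$ is represented by some $v \in \dot S_{x,y}$ with a non-column-$m$ entry nonzero, and its $\mu$-limit still lies in $\dot S_{x,y}^I$. This produces a $T$-equivariant retraction $p : U \to Z'$ with fixed locus $Z'$. The Attractive Proposition now gives $(i')^*(\ic|_U) \simeq p_*(\ic|_U)$, and since $\ic|_U$ is pure of weight zero, the Attractive Weight Argument forces $(i')^*\ic$ to be pure of weight zero and delivers the identification $H^\bullet(Z', (i')^*\ic) \simeq H^\bullet(U, \ic|_U)$.

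For the Poincar\'e polynomial I would first identify $Z' \simeq \PM_\lambda \dot S_{x,c}$, where $c$ is the maximal element of $L$. This rests on the matrix identity $S_x^I \cap \overline{ByB/B} = S_{x,c}$: the rank conditions defining $\overline{ByB/B}$ on matrices whose $m$-th column vanishes collapse to those defining $\overline{BcB/B}$, and one checks that $S_x^I \cap \overline{BvB/B}$ is nonempty exactly when $v \in L$. Under this identification the Schubert stratification of $Z'$ is indexed by $v \in L \setminus \{x\}$, the closure of the $v$-stratum in $Z'$ is $\PM_\lambda \dot S_{x,v}$, and the stalk of $(i')^*\ic$ along the $v$-stratum has Poincar\'e polynomial $P_{v,y}$. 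Since $(i')^*\ic$ is pure and constructible with respect to this stratification, the decomposition theorem for pure complexes yields
\[
(i')^*\ic \;\simeq\; \bigoplus_{x \ne v \in L} \ic(\PM_\lambda \dot S_{x,v})^{\oplus m_v(q)}
\]
for graded multiplicities $m_v(q) \in \ZM[q]$. Matching stalks at each stratum $u \in L \setminus \{x\}$ gives $P_{u,y} = \sum_v m_v(q) \, P_{u,v}$, which is precisely the expansion $r_{x,y,L} = \sum_v m_v \, b_v$ defining $\gamma_v$, so $m_v = \gamma_v$. Taking cohomology and applying Proposition \ref{prop:qkl} to each IC summand yields Poincar\'e polynomial $\sum_v \gamma_v \, \qKL_{x,v} = I_{x,y,L}$.

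The hard part will be the matrix identity $S_x^I \cap \overline{ByB/B} = S_{x,c}$ together with the compatibility of stratifications, and the clean application of the decomposition theorem so that the IC multiplicities exactly reproduce the combinatorially defined $\gamma_v$. The decomposition step relies crucially on purity of $(i')^*\ic$, on the identification of the closed strata in $Z'$ with the varieties $\PM_\lambda \dot S_{x,v}$, and on the observation that the classes of the $\ic(\PM_\lambda \dot S_{x,v})$ in the Grothendieck group of pure complexes on $Z'$ behave exactly as the Kazhdan-Lusztig basis $\{b_v\}$ does in $M_L$.
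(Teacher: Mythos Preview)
Your proposal is essentially correct and follows the same architecture as the paper: the one-parameter subgroup you call $\mu$ is exactly the paper's $\gamma : z \mapsto \diag(1,z,\dots,z)$, and the retraction plus Attractive Weight Argument for purity of $(i')^*\ic$ is precisely what the paper does in the opening of \S\ref{sec:inductive_sheaves}.

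The one genuine difference is in how you extract the multiplicities $\gamma_v$. You decompose $(i')^*\ic$ directly on $Z'$ using its purity, whereas the paper takes a detour: it identifies $S_x^I$ with the slice $S_{x,n}$ inside the smaller flag variety $G_n/B_n$ (via the embedding $\phi$ of the Levi), proves that $\phi^*\ic(\overline{ByB/B},\QM)$ is pure on $G_n/B_n$ using Braden's hyperbolic localization theorem, decomposes there into Schubert IC sheaves, and only then restricts to the slice and passes to the $\lambda$-quotient. Your direct route is valid---purity of $(i')^*\ic$ together with the fact that its restriction to each stratum is a sum of even-shifted constant sheaves forces the summands to be the $\ic(\PM_\lambda \dot S_{x,v})$---and is arguably more economical. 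The paper's route buys a cleaner conceptual picture (decomposition happens on a genuine flag variety, where the simple perverse sheaves are the familiar Schubert IC sheaves, and the connection to the Levi $G_n$ makes the ``induction on rank'' philosophy explicit) and avoids having to argue carefully about what the strata of $Z'$ and their closures look like. Note also that what you write as $S_{x,c}$ is really the slice $S_{x',c'}$ in $G_n/B_n$ under $\phi$; the paper makes this identification precise in Proposition~\ref{prop:slice_equal} rather than via your proposed rank-condition argument.
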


 We prove this in \S\ref{sec:inductive_geometry} and \S \ref{sec:inductive_sheaves} below.

 \begin{thm} \label{thm:hypercube}
   $i^! \ic$ is pure, and the Poincar\'e polynomial of $H^\bullet(Z, i^!\ic)$
   is given by $Q_{x,y,L}$.
 \end{thm}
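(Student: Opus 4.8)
The plan is to understand $Z = \PM_\lambda \dot{S}^H_{x,y}$ very explicitly and then compute $i^!\ic$ by combining purity with a stratum-by-stratum analysis. First I would pin down the geometry of $Z$. Since $S^H_x$ is the affine subspace of the slice $S_x$ obtained by allowing nonzero entries only in column $m = x^{-1}(0)$ (beyond the forced $1$'s), the slice $S^H_{x,y} = S^H_x \cap \overline{ByB/B}$ is cut out of this affine space by the rank conditions \eqref{eq:Schubeq}. The claim in \S\ref{sec:sketch}(1) is that after projectivizing by the attractive $\CM^*$-action, $Z$ becomes (a torus-invariant coarsening of the toric stratification of) a weighted projective space. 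So the first step is to verify this: show that the rank equations restricted to $S^H_x$ are \emph{linear} (or trivial) in the column-$m$ coordinates — intuitively, because a single free column can only drop rank in a ``linear'' way — so that $\dot{S}^H_{x,y}$ is a punctured linear subspace of $S^H_x$, and hence $Z = \PM_\lambda \dot{S}^H_{x,y}$ is a (possibly weighted) projective space $\PM^{N}$. I would identify its torus-fixed points with the $T$-fixed points of $\dot{S}^H_{x,y}$, which are exactly the coordinate lines through $x$ lying in $S^H_x$, i.e. the edges $\alpha$ with target $x$ whose source lies outside $L$ — this is precisely the edge set $E$ of the hypercube cluster at $x$.

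Next I would compute $i^!\ic$. Purity is the easy half: $\ic$ is pure of weight zero on $\PM_\lambda\dot{S}_{x,y}$, and $i^!$ can only raise weights, while a further attractive/retraction argument (or the Attractive Weight Argument applied to a $\CM^*$-action contracting a neighbourhood of $Z$, analogous to the one used in Proposition \ref{prop:qkl}) forces $i^!\ic$ to be pure of weight zero as well; this is where \S\ref{sec:sketch}(1)'s assertion ``$i^!\ic$ is pure'' gets established. Granting purity, $H^\bullet(Z, i^!\ic)$ is computed by decomposing $Z = \PM^N$ along its toric stratification (refined by the Schubert strata \eqref{eq:strat}) and using the localization/cellular spectral sequence: because $\PM^N$ has a cell decomposition by affine spaces and $i^!\ic$ is a pure object whose restriction to each stratum $S^u_{x,y}/_\lambda\CM^*$ has Poincaré polynomial governed by $P_{u,y}$ together with the codimension/attracting-dimension of that stratum in $Z$, the spectral sequence degenerates and one reads off $H^\bullet(Z, i^!\ic)$ as an alternating/weighted sum. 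The combinatorics of which strata of $\overline{ByB/B}$ meet $S^H_x$, and with what attracting dimensions, is exactly encoded by the hypercube map $\theta$: a subset $I \subseteq E$ of fixed points spans a torus-invariant coordinate subspace $\PM^{|I|-1} \subset Z$, its generic point lies in the Schubert stratum indexed by $\theta(I)$, and the $(q-1)^{|I|-1}$ factor is the ``reduced'' Poincaré/Euler contribution of $\PM^{|I|-1}$. Summing $\sum_{\emptyset\ne I\subseteq E}(q-1)^{|I|-1}P_{\theta(I),y}$ and applying the duality twist $q^{\ell(y)-\ell(x)-1}(\,\cdot\,)(q^{-1})$ coming from $i^!$ (Verdier duality, exactly as $q^{\ell(y)-\ell(x)}P_{x,y}(q^{-1})$ appeared for $i^!$ of the stalk in Proposition \ref{prop:qkl}) yields precisely $Q_{x,y,L}$ as defined in \S\ref{sec:hypercube piece}.

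I would organize the write-up as: (a) the explicit linear description of $S^H_{x,y}$ and the identification $Z \cong \PM^N$ with fixed points $\leftrightarrow E$; (b) the matching of torus orbits in $Z$ with Schubert strata via $\theta$, including the attracting-dimension bookkeeping; (c) purity of $i^!\ic$ via an attractive weight argument; (d) the degeneration of the spectral sequence and the resulting formula, identified with $\widetilde{Q}_{x,y,L}$ and then with $Q_{x,y,L}$ after the duality twist. The main obstacle I anticipate is step (b) — proving that the $T$-orbit closures inside $Z$ intersect the Schubert strata exactly as predicted by $\theta$, i.e. that the generic point of the coordinate subspace spanned by $I \subseteq E$ lies in $BuB/B$ for $u = \theta(I)$, and not in a smaller stratum. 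This is really a statement about how corner ranks of the specific matrices in $S^H_x$ behave under turning on a prescribed set of column-$m$ entries, and it is where the "hypercube cluster" hypothesis (that incomparable-source subsets span genuine hypercubes) must be used to guarantee the map $I \mapsto \theta(I)$ is well-defined and injective on the relevant strata. A secondary technical point is handling the \emph{weighted}-projective-space case (non-trivial $\lambda$-weights) uniformly with the ordinary $\PM^N$ case; I expect the cohomological computations to go through verbatim since intersection cohomology of a weighted projective space matches that of ordinary projective space, but it needs to be said.
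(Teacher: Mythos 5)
Your steps (a), (b) and (d) track the paper's argument closely: the paper shows $S^H_{x,y}\cong\CM^p$ by the determinant-expansion observation, notes that the $T$-weights on $S^H_x$ are distinct so that $Z$ is a weighted projective space with finitely many $T$-orbits each lying in a unique Schubert stratum (this is what makes $\theta_{\geom}$ well defined), and then computes the Poincar\'e polynomial of $H^\bullet(Z,i^*\ic)$ by the toric-stratification formula $\sum_{\emptyset\ne I}(q-1)^{|I|-1}P(\FS_{x_I})$ followed by the Verdier-duality twist to pass from $i^*$ to $i^!$. Your anticipated ``main obstacle'' (matching toric strata with Schubert strata via $\theta$) is indeed a real piece of work, carried out in \S\ref{sec:hypercube combinatorics} via corner-rank matrices.

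However, there is a genuine gap in your step (c). You call purity ``the easy half'' and propose to deduce it from ``$i^!$ can only raise weights'' together with an Attractive Weight Argument for a $\CM^*$-action contracting a neighbourhood of $Z$ onto $Z$. No such action exists: the auxiliary cocharacter $\gamma$ \emph{repels} from $S^H_x$ and attracts onto $S^I_x$ (Lemma \ref{lem:retract}), so $Z$ is on the wrong side of the hyperbolic decomposition, and the one-sided weight estimate from $i^!$ gives only weights $\ge 0$, not purity. The paper's actual proof of purity is the most delicate part of the theorem: it verifies the hypotheses of Corollary \ref{cor:pure}, whose key condition is the surjectivity of the equivariant restriction maps $H^\bullet_T(U_{\{i\}},\FS)\to H^\bullet_T(\OC_J,\FS)$; this is reduced via Nakayama's lemma to surjectivity of stalk restriction maps for $\ic(\overline{ByB/B})$, which is the Braden--MacPherson theorem \cite[Theorem 3.6]{BMP}. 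The paper explicitly warns that the analogue of Proposition \ref{prop:pure} ``for a general attractive fixed point is false,'' and indeed \S\ref{sec:obstacle} identifies exactly this purity statement as the unresolved obstacle for general hypercube decompositions. So this step cannot be waved through with a generic weight argument; you need the special structure of the toric stratification of $Z$ (all strata closures are weighted projective spaces, hence rationally smooth) plus the Braden--MacPherson surjectivity as external input.
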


 We prove this in \S \ref{sec:hypercube_sheaves} below, using an
 alternative (geometric) definition of the hypercube map. In \S
 \ref{sec:hypercube combinatorics} we check that these two definitions
 agree.

 \begin{ex}
  In the setting of our running example (see Examples
  \ref{ex:running}, \ref{ex:running2} and \ref{ex:running3}) we have
  \[
  S^H_x = \left ( \begin{matrix}
      1 & 0 & 0 & 0 \\ * & 0 & 1 & 0 \\ * & 1 & 0 & 0 \\ 0 & 0 & 0 & 1
    \end{matrix} \right ) \quad \text{and} \quad
    S^I_x = \left ( \begin{matrix}
      1 & 0 & 0 & 0 \\ 0 & 0 & 1 & 0 \\ 0 & 1 & 0 & 0 \\ 0 & * & * & 1
   \end{matrix} \right ) 
\]
which produce two $T$-stable ``opposite $\PM^1$'s'' in $\PM^1
\times \PM^1$ discussed in Example \ref{ex:running2}.
\end{ex}

 \subsection{The inductive piece:
   geometry} \label{sec:inductive_geometry}
 The results for the inductive piece follow essentially because
everything is nicely compatible with restriction to the fixed points
of a particular choice of $\gamma : \CM^* \to T$ whose centralizer is
$\GL_1 \times \GL_n$. This is a common
theme in Geometric Representation Theory: much information can be
gained from Levi subgroups via torus localization. Now we give the details.

Recall from above that we have fixed a one-parameter subgroup $\lambda
: \CM^* \to T$ which acts attractively on $S_{x,y}$.   It is this $\lambda$
that we used to form the quotient 
  \[
\PM_\lambda \dot{S}_{x,y} = \dot{S}_{x,y}/_\lambda \CM^*.
    \]

    \begin{remark}
      It will not be important to fix a particular choice of $\lambda$. However, the reader might like to check that we could
      take
      \[
\lambda : z \mapsto \diag (1, z, z^2, \dots, z^n).
  \]
    \end{remark}

    Below another one-parameter subgroup will be
    important.  Consider
\begin{align*}
  \gamma : \CM^* &\to \GL_{n+1} \\
   z &\mapsto \diag (1, z, z, \dots, z).
       \end{align*}
Because we have two $\CM^*$'s at play, we will use $\lambda$ and
$\gamma$ to distinguish them.  For example, if we refer to the $\CM^*_\lambda$-action we mean
the $\CM^*$-action provided by $\lambda$, and $S_{x,y}^\gamma$ denotes
the $\CM^*$-fixed points on $S_{x,y}$, with $\CM^*$-action given by
$\gamma$.

Firstly, note that $\CM^*_\gamma$ acts naturally on
$(n+1)\times (n+1)$ matrices via conjugation. The action is via
scaling block matrices as follows
\[
  \left ( \begin{array}{c} \begin{tikzpicture}[scale=.5]
      \node at (0,0) {$1$};
      \node at (2,0) {$z^{-1}$};
      \node at (0,-2) {$z$};
      \node at (2,-2) {$1$};
      \draw (-.5,-.5) -- (3.5, -.5);
      \draw (.5,.5) -- (.5, -3.5);
    \end{tikzpicture} \end{array} \right )
\]
In particular,
\[
G^\gamma = \GL_1(\CM) \times \GL_{n}(\CM).
\]
This subgroup will play an important role below. We give it its own
notation:
\[
G_{n} = \GL_1(\CM) \times \GL_{n}(\CM) \subset G.
\]
This is a Levi subgroup of $G$. We
denote by $S_n$ its Weyl group. It consists of the subgroup of
$S_{n+1}$ consisting of those permutations which fix $0$. We let $B_{n} = G_n \cap B$ denote the Borel subgroup of upper
triangular matrices in $G_n$. 

\begin{remark}
  We caution the reader that the inclusion of $G_n \into G$ is via
  block $1 \times n$-matrices, not $n \times 1$ as it more
  common. Similarly, our inclusion $S_n \into S_{n+1}$ via
  permutations fixing $0$ is slightly non-standard.
\end{remark}

\begin{prop}
  $(G/B)^\gamma$ is a disjoint union of flag varieties for $G_n$. More
  precisely,
  \[
(G/B)^\gamma = \bigsqcup_{x \in S_n \setminus S_{n+1}} G_n \cdot x
\]
and each $G_n \cdot x$ orbit is isomorphic to $G_n/B_n$.
\end{prop}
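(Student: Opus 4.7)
The plan is to describe $(G/B)^\gamma$ directly as a variety of flags, stratify it by a locally constant integer invariant, and match the strata with coset representatives in $S_n \setminus S_{n+1}$.

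I would first observe that $\gamma(z) = \diag(1,z,\ldots,z)$ has exactly two eigenspaces on $\CM^{n+1}$: the line $L_0 = \CM e_0$ (eigenvalue $1$) and the hyperplane $L_1 = \mathrm{span}(e_1,\ldots,e_n)$ (eigenvalue $z$). A complete flag $V_\bullet$ is $\gamma$-fixed iff every $V_i$ is $\gamma$-stable, iff every $V_i$ splits as $(V_i \cap L_0) \oplus (V_i \cap L_1)$. Since $L_0$ is one-dimensional, the integer
\[
k(V_\bullet) := \min\{\,i \ge 1 \;|\; L_0 \subseteq V_i\,\}
\]
is well defined on $(G/B)^\gamma$ with values in $\{1,\ldots,n+1\}$. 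The conditions $\dim(V_{k-1} \cap L_0) = 0$ and $\dim(V_k \cap L_0) = 1$ are each open on $(G/B)^\gamma$, so $k$ is locally constant and partitions $(G/B)^\gamma$ into open-and-closed pieces $F_k$ for $k = 1,\ldots,n+1$.

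Next I would identify $F_k \simto G_n/B_n$. The map $V_\bullet \mapsto (V_i \cap L_1)_i$ sends $F_k$ to complete flags of $L_1 \cong \CM^n$ (the dimensions of $V_i \cap L_1$ run $0,1,\ldots,k-1,k-1,k,\ldots,n$, with one repeated step), and is a bijection with inverse obtained by inserting $L_0$ at step $k$. The group $G_n = \GL(L_0) \times \GL(L_1)$ preserves $L_0 \oplus L_1$ and hence each $F_k$; the $\GL(L_0)$-factor acts trivially on the induced flag in $L_1$, while $\GL(L_1) \cong \GL_n$ acts transitively on complete flags of $L_1$ with stabilizer equal to the Borel of upper triangular matrices in $\GL(L_1)$. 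Since $B_n = \GL(L_0) \times B_{\GL_n}$, this yields a $G_n$-equivariant isomorphism $F_k \cong G_n/B_n$.

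Finally I would match this decomposition with $S_n \setminus S_{n+1}$. For $x \in S_{n+1}$ the $T$-fixed flag $xB/B$ has $V_i = \mathrm{span}(e_{x(0)},\ldots,e_{x(i-1)})$, so a direct computation gives $k(xB/B) = x^{-1}(0) + 1$. Since $S_n$ stabilizes $0$, the map $S_n \cdot x \mapsto x^{-1}(0)$ is a bijection $S_n \setminus S_{n+1} \simto \{0,\ldots,n\}$; moreover, permutation matrices in $S_n$ lie in $G_n$, so the orbit $G_n \cdot xB/B$ depends only on the coset $S_n \cdot x$ and coincides with $F_{x^{-1}(0)+1}$ by the transitivity established in the previous step. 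The argument is essentially linear algebra; the only point requiring care is the indexing bookkeeping aligning the geometric label $k$ with the combinatorial coset invariant $x^{-1}(0)$, and this presents no real obstacle.
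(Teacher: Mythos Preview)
Your proof is correct and takes a genuinely different route from the paper. The paper argues in explicit matrix coordinates: it first checks directly that the stabilizer in $G_n$ of each minimal coset representative $zB/B$ is $z^{-1}G_n z \cap B = B_n$, yielding an inclusion $\bigsqcup G_n/B_n \hookrightarrow (G/B)^\gamma$; it then computes the $\gamma$-action on each coordinate chart $xU_-$ (scaling the free entries in the $0^{th}$ row by $z^{-1}$ and those in the $x^{-1}(0)^{th}$ column by $z$) and reads off that the fixed points in each chart lie in the appropriate $G_n$-orbit, giving the reverse inclusion. You instead work intrinsically with flags, using the eigenspace decomposition $\CM^{n+1} = L_0 \oplus L_1$ and the locally constant jump index $k(V_\bullet)$ to cut $(G/B)^\gamma$ into open-and-closed pieces, each visibly a copy of the flag variety of $L_1$. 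Your argument is coordinate-free and conceptually cleaner for this isolated statement; the paper's chart computation, on the other hand, is immediately reused in the subsequent results to identify $S_x^\gamma$ with $S_x^I$ and to verify the retraction of $S_x \setminus S_x^H$ onto $S_x^I$, so in context it amortizes across several proofs.
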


\begin{proof}
  This is a standard result, so we will be brief. Firstly, $G_n = G^\gamma$ certainly acts on the
  $(G/B)^\gamma$. If $z$ denotes a minimal coset
  representative of $S_n \setminus S_n$ (i.e. $z(l) = 0$ for some
  $0 \le l \le n$ and $z$ maps the remainder $0, 1, \dots, l-1, l+1,
  \dots, n$ monotonically to $1, \dots, n$) then the stabilizer of
  $G_n$ acting on $zB/B$ is
  \[
    z^{-1} G_n z \cap B = B_n
  \]
  as follows from an easy calculation. Thus action on the points given
  by minimal coset representatives gives an inclusion
    \begin{equation} \label{eq:Gninclustion}
\bigsqcup_{x \in S_n \setminus S_{n+1}} G_n/B_n \subset (G/B)^\gamma.
\end{equation}

A similar computation to that establishing $G^\gamma = G_n$ above
shows that on each of the $xU_-$ for $G/B$, the $\gamma$-action is via scaling the free
variables in $0^{th}$-row by $z^{-1}$, and scaling the entries in the
$x^{-1}(0)^{th}$-column by $z$. Thus the fixed points in this chart
have zeroes in the $0^{th}$-row and $x^{-1}(0)^{th}$-column, except
the $1$ in position $(0, x^{-1}(0))$. It follows that
\[
(xU)^{\gamma} \subset G_n  \cdot xB/B
\]
As the charts $xU_-$ cover $G/B$, we deduce that the inclusion in
\eqref{eq:Gninclustion} is an equality. \end{proof}

Consider a $T$-fixed point $xB/B$. We can consider the slice $S_x
\subset G/B$ to the $B$-orbit through $xB/B$. On the other hand, if $z
\in S_nx$ denotes the minimal coset representative then we have seen
that $g \mapsto g \cdot zB/B$ yields an embedding 
\[
 \phi : G_n/B_n \into G/B
\]
of flag varieties. If we set $x' := xz^{-1}$ then $x' \in S_n$, and
$x'B_n/B_n$ maps to $xB/B$ under this inclusion. We denote by
$S_{x,n}$ the image under this inclusion of the slice $S_{x'}$ to the
$B_n$-orbit through $x'$ in $G_n/B_n$.

\begin{prop} \label{prop:slice_equal}
  We have $S_x^\gamma = S_{x,n}$ as subvarieties of $G/B$.
\end{prop}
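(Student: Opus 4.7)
The plan is to show that both $S_x^\gamma$ and $S_{x,n}$ coincide with the explicit subvariety $S^I_x$ defined in \S\ref{sec:slices}, via a direct calculation in the affine chart $xU_-$ around $xB/B$. Both sides clearly contain $xB/B$, and by the preceding discussion both are contained in the $\gamma$-fixed component $\phi(G_n/B_n) \subset (G/B)^\gamma$; in particular they both lie in $xU_-$.

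First I would identify $(xU_-)^\gamma$. The explicit description of the $\gamma$-action given just above shows that $\gamma$ scales the free variables in row $0$ of $xU_-$ by $z^{-1}$ and those in column $m = x^{-1}(0)$ by $z$. Hence $(xU_-)^\gamma$ is the linear subspace of $xU_-$ cut out by setting those free variables to zero.

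Second I would compute $S_x^\gamma = S_x \cap (xU_-)^\gamma$ and show it equals $S^I_x$. The key observation is that row $0$ of any matrix in $S_x$ is already forced to be zero except for the single $1$ at position $(0,m)$: for $j \neq m$ the $1$ in column $j$ sits at row $x(j) \geq 1$, so the entry $(0,j)$ lies above a $1$ and vanishes by definition of $S_x$. Hence the row-$0$ condition from $\gamma$-fixedness is automatic, and only the column-$m$ condition remains -- which is precisely the defining condition of $S^I_x$.

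Third I would verify $S_{x,n} = S^I_x$ by computing $\phi(g) = gzB/B$ explicitly for $g \in S_{x'}$. Since $x' \in S_n$ fixes $0$, any $g \in S_{x'}$ has block form $g = \begin{pmatrix} 1 & 0 \\ 0 & h \end{pmatrix}$ with $h$ in the corresponding slice inside $\GL_n/B_n$. Using that $z$ is the cyclic permutation sending $l := m$ to $0$ and monotonically increasing on the complement, one has $(gz)_{i,j} = g_{i,z(j)}$: this gives $e_0$ as column $m$, zero in row $0$ outside column $m$, and otherwise the entries $h_{i,z(j)}$ in the remaining positions. The identity $x'(z(j)) = x(j)$ for $j \neq m$ then places the $1$'s of $gz$ at positions $(x(j),j)$ and translates the ``zeros above $1$'' conditions of $S_{x'}$ into those of $S_x$; together with the trivial column $m$, this exhibits $gz \in S^I_x$, and the construction is visibly reversible.

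The main technical work lies in the third step: unraveling right multiplication by the cyclic shift $z$ and checking that the slice conditions defining $S_{x'} \subset G_n/B_n$ transport under $\phi$ to the defining conditions of $S^I_x$ inside $S_x$. The first two steps are essentially formal once the $\gamma$-action on the chart has been made explicit.
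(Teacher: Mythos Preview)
Your proposal is correct and follows essentially the same route as the paper: both sides are identified with the explicit subvariety $S_x^I$, first by computing the $\gamma$-fixed locus inside $S_x$, and then by checking that the image of $S_{x'}$ under right multiplication by the minimal coset representative $z$ lands in $S_x^I$. The paper compresses each half into a single sentence (``a simple computation'' and ``right multiplication by $z$ produces exactly $S_x^I$''), whereas you spell out the row/column bookkeeping; the one minor slip is the claim that containment in $\phi(G_n/B_n)$ forces containment in $xU_-$, which is not literally true, but your explicit calculations in steps two and three do not rely on it.
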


\begin{proof}
  Recall the subvariety $S_x^I$ defined in \S
  \ref{sec:slices}. a simple computation shows
  \[
S_x^I = S_x^\gamma.
\]
On the other hand, the slice to $x'$ in $G_n/B_n$ has the form:
\[
  \left ( \begin{array}{c} \begin{tikzpicture}[scale=.5]
      \node at (0,0) {$1$};
      \node at (2,0) {$0$};
      \node at (0,-2) {$0$};
      \node at (2,-2) {$S_{x'}$};
      \draw (-.5,-.5) -- (3.5, -.5);
      \draw (.5,.5) -- (.5, -3.5);
    \end{tikzpicture} \end{array} \right )
\]
Right multiplication by $z$ produces exactly $S_x^I$, and the result follows.
\end{proof}

Finally, one checks easily that $\gamma$ has positive weights on the
slice $S_x$. The following is immediate:

       \begin{lem} \label{lem:retract}
For any $p \in S_{x} - S_{x}^H$ we have
\[
         \lim_{z \to 0} \gamma(z) \cdot
         p \in S_x^I. \]
     \end{lem}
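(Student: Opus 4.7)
The plan is to compute the $\gamma$-action directly in the chart $xU_-$ and then read off the weights from the coordinate description of $S_x$, $S_x^H$ and $S_x^I$ already set up in \S\ref{sec:slices}.

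First I would translate the left-multiplication by $\gamma(z)$ on $G/B$ into an action on the chart. Since $\gamma(z) \in T \subset B$, we have $\gamma(z) \cdot xuB = x(x^{-1}\gamma(z)x)u B = x(tut^{-1})B$ with $t = x^{-1}\gamma(z)x$. So the action on the $U_-$-coordinate $u$ is conjugation by the diagonal matrix $t = \diag(e_0,\dots,e_n)$, where $e_m = 1$ because $x(m) = 0$ and $e_i = z$ for $i \ne m$. Under the identification $(xu)_{ij} = u_{x^{-1}(i),j}$, this says: on the entries of $g = xu$, the free variables in column $m$ scale by $z$, free variables in row $0$ would scale by $z^{-1}$, and all other free variables are unchanged.

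Next I would observe that inside $S_x$, row $0$ has no free entries at all: the only position $(0,j)$ satisfying both $j \le x(0)$ and $0 \ge x^{-1}(j)$ is $j = m$, which is the $1$ at $(0,m)$. Therefore on $S_x$ the $\gamma$-weights are $1$ on the column-$m$ free entries and $0$ elsewhere, so they are non-negative; this is precisely what the preceding remark ``$\gamma$ has positive weights on $S_x$'' asserts, and it puts us in the situation of the Attractive Proposition in \S\ref{sec:tools}. The fixed locus is cut out by setting the column-$m$ entries to zero, and by its very definition this locus is $S_x^I$.

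Combining these two points, for any $p \in S_x$ the limit $\lim_{z\to 0}\gamma(z)\cdot p$ exists and equals the point obtained from $p$ by killing its column-$m$ coordinates; this point lies in $S_x^I$. Restricting to $p \in S_x - S_x^H$ yields the statement of the lemma. The one step worth spelling out is the vanishing of row $0$ in $S_x$, which is a direct combinatorial check against the defining conditions of $S_x$; everything else is bookkeeping, consistent with the paper calling the lemma ``immediate.''
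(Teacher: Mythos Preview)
Your argument is correct and follows exactly the approach the paper intends: the paper only says that ``$\gamma$ has positive weights on the slice $S_x$'' and declares the lemma ``immediate,'' and you have simply unpacked that sentence by computing the weight of the conjugation action on each free coordinate of $S_x$ and observing that the weight-zero locus is $S_x^I$. One small cosmetic point: the inequalities you quote for the row-$0$ vanishing (``$j \le x(0)$ and $0 \ge x^{-1}(j)$'') track the paper's displayed formula literally, but the cleaner way to phrase it is that for $j \ne m$ the entry $(0,j)$ is either right of the $1$ at $(0,m)$ or above the $1$ at $(x(j),j)$ (since $x(j) > 0$); this avoids any ambiguity in the indexing conventions.
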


      \subsection{The inductive piece:
        sheaves} \label{sec:inductive_sheaves}
      In this section we prove Theorem \ref{thm:inductive}. We keep
      the notation of previous sections, in particular \S
      \ref{sec:decomp}. Note first that Lemma \ref{lem:retract}
      implies that $U = \PM_\lambda \dot{S}_{x,y} - \PM_\lambda
      \dot{S}^H_{x,y}$ retracts equivariantly onto $\PM_\lambda
      \dot{S}_{x,y}^I$. Hence, by the Attractive Proposition and the
      Attractive Weight Argument, we conclude:
      \begin{itemize}
      \item $(i')^*\ic$ is pure;
      \item $H^\bullet(U, j^*\ic) = H^\bullet(Z', (i')^*\ic)$,
        and both are pure.
      \end{itemize}
      All that remains is to check that the Poincar\'e polynomial of
      $H^\bullet(Z', (i')^*\ic)$, is given by $Q_{x,y,L}$. This will
      take a little more work, and uses in a more substantial way the
      results of the previous section.

      Consider the embedding
      \[
\phi : G_n/B_n \into G/B
\]
of flag varieties considered in the previous section, whose image is
the $G_n$-orbit through $xB/B \in G/B$.

\begin{prop}
  $\phi^* \ic(\overline{ByB/B}, \QM)$ is pure.
\end{prop}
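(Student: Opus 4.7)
The plan is to apply the Attractive Proposition and Attractive Weight Argument of \S\ref{sec:tools} with respect to the action of $\gamma$, on a suitable open neighbourhood of $\phi(G_n/B_n)$ in $G/B$.

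First I would set up the parabolic geometry. Let $P \supset B$ be the standard parabolic with Levi $G_n$, and let $U_P^-$ denote the unipotent radical of the opposite parabolic. The projection $G/B \to G/P$ is a fibre bundle with fibre $P/B \cong G_n/B_n$, and under the identifications of \S\ref{sec:inductive_geometry} the image of $\phi$ is exactly the fibre over the basepoint. Let $V \subset G/B$ be the preimage of the affine chart $U_P^- \cdot eP \subset G/P$; then $V \cong U_P^- \times P/B$ is an open $\gamma$-stable neighbourhood of $\phi(G_n/B_n)$.

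Next I would verify that $\gamma$ contracts $V$ onto $\phi(G_n/B_n)$ as $z \to 0$. Since $\gamma(z) = \diag(1,z,\dots,z)$ is central in the Levi $G_n$, it acts trivially on the fibre $P/B$. On the base, conjugation by $\gamma(z)$ acts on the root vectors spanning $U_P^-$ (which correspond to entries in the first column below the diagonal) with weight $+1$, so $U_P^-$ is contracted to the identity as $z \to 0$. Thus the action of $\gamma$ retracts $V$ equivariantly onto $\phi(G_n/B_n)$.

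Finally, since $\ic := \ic(\overline{ByB/B}, \QM)$ is pure of weight zero on $G/B$, its restriction $\ic|_V$ to the open subset $V$ is also pure. The Attractive Proposition yields an isomorphism $p_*(\ic|_V) \simto \phi^*(\ic|_V) = \phi^* \ic$, where $p : V \to \phi(G_n/B_n)$ is the retraction, and the Attractive Weight Argument then forces $\phi^* \ic$ to be pure of weight zero. The only nontrivial point is the sign computation for the $\gamma$-weights on $U_P^-$, which is an immediate matrix calculation from the explicit formula for $\gamma$; the rest is a direct application of the standard toolkit.
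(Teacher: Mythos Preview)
Your argument has a genuine gap: the claim that ``the image of $\phi$ is exactly the fibre over the basepoint'' of $G/B \to G/P$ is false in general. That identification holds only when $m = x^{-1}(0) = 0$, i.e.\ when the minimal coset representative $z$ is the identity; in that case $\phi(G_n/B_n) = G_n B/B = P/B$ is indeed the fibre over $eP/P$, and your contraction works. But for arbitrary $x$ the embedding $\phi$ sends $gB_n \mapsto gzB/B$ with $z \ne \id$, and one checks that $z^{-1}G_n z \not\subset P$ (e.g.\ because $z(0)=1$ and the $(j,1)$-entries of $G_n$ for $j \ge 2$ are unconstrained), so $G_n \cdot zB/B$ does not even project to a single point of $G/P$. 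Equivalently: among the $n{+}1$ components of $(G/B)^\gamma$, only the one with $m=0$ has an open $\gamma$-attracting neighbourhood (and only the one with $m=n$ has an open $(-\gamma)$-attracting neighbourhood). For intermediate $m$ the Bia{\l}ynicki--Birula cell of $\phi(G_n/B_n)$ is merely locally closed, so no open $V$ as you describe exists and the direct Attractive Weight Argument cannot be run.

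The paper's proof handles the general case by invoking Braden's hyperbolic localization theorem. One takes the $(-\gamma)$-attracting set $Z^+ = \bigsqcup_{u \in S_n x} BuB/B$, which is an affine bundle $\pi: Z^+ \to Z = \phi(G_n/B_n)$; Braden's theorem gives that $\pi_!(\phi')^*\ic$ is pure. The Attractive Proposition identifies this with $\iota^!(\phi')^*\ic$, and finally the normally non-singular inclusion $\iota: Z \hookrightarrow Z^+$ (for the Bruhat stratification) gives $\iota^! = \iota^*[-2\dim\text{fibre}]$, which is $\phi^*\ic$ up to shift. The essential extra ingredient you are missing is precisely this appeal to hyperbolic localization, which substitutes for the unavailable open retraction.
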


\begin{proof}
  Consider the Bia{\l}ynicki-Birula decomposition with respect to
  $-\gamma$.\footnote{i.e. $z
  \mapsto \diag(1,z^{-1},\dots, z^{-1})$.} Because $-\gamma$ has
  positive weights on $B$, the attracting sets are unions of Bruhat
  cells. In particular, the attracting set $Z^+ \subset G/B$ for the
  component $Z = \phi( G_n/B_n)$ of $(G/B)^\gamma$ is given by
  \[
Z^+ = \bigsqcup_{u \in S_nx} BuB/B
\]
and the attracting map $\pi$ realizes $Z^+$ as an affine bundle over $Z$. We
have a diagram
\[
  \begin{tikzpicture}[scale=0.7]
    \node (z) at (0,0) {$Z^+$};
    \node (fbig) at (2,0) {$G/B$};
    \node (flittle) at (0,-2) {$G_n/B_n$};
    \draw[->] (z) to node[above] {$\phi'$} (fbig);
    \draw[->] (z) to[out=-45,in=45] node[right] {$\pi$} (flittle);
    \draw[<-] (z) to[out=-135,in=135] node[left] {$\iota$} (flittle);
        \draw[->] (flittle) to[out=0,in=-90] node[below, right] {$\phi$} (fbig);
  \end{tikzpicture}
\]
By Braden's theorem on hyperbolic localization \cite[Theorems 1 and
2]{Braden} we know that
\[
\pi_! (\phi')^* \ic(\overline{ByB/B}, \QM)
\]
is pure. (It is a summand of the hyperbolic localization of
$\ic(\overline{ByB/B}, \QM) $ to the fixed points of $-\gamma$.) On the other hand, we know that this is equal to
\[
\iota^! (\phi')^* \ic(\overline{ByB/B}, \QM)
\]
by the Attractive Proposition. Finally, $Z^+$ is stratified by Bruhat
cells, and $\iota$ is a normally non-singular inclusion for this
stratification. Hence $\iota^! = \iota^*[-2m]$, where $m$ is the complex
dimension of the fibres. We deduce that the above complexes are equal
(up to a shift) to
\[
\iota^* (\phi')^* \ic(\overline{ByB/B}, \QM) \cong \phi^*\ic(\overline{ByB/B}, \QM).
\]
The latter complex is the one for which we wished to establish purity.
\end{proof}

As $\phi^* \ic(\overline{ByB/B}, \QM)$ is pure, it decomposes as a direct sum
of shifts of intersection cohomology complexes. We now describe how to
compute some of these multiplicities. First some notation: given a polynomial
$p = \sum a_i q^i$ with positive 
coefficients and an object $\FS$ in a derived category, write
\[
p \cdot \FS := \bigoplus \FS^{\bigoplus a_i}[-2i].
\]
Because $\phi^* \ic(\overline{ByB/B}, \QM)$ is pure and its stalks
have no cohomology in odd or negative degree, we can certainly write
\begin{equation} \label{eq:res_mult}
\phi^* \ic(\overline{ByB/B}, \QM) = \bigoplus_{v \in S_nx}  \gamma'_v \cdot \ic(\overline{BvB/B}). 
\end{equation}
for certain $\gamma_v' \in \ZM_{\ge 0}[q]$.

Recall that in \S \ref{sec:inductive_combinatorial} we defined certain
polynomials $\gamma_v$ as part of the definition of the inductive
piece:

\begin{prop} \label{prop:gammas}
  For $x \ne v \in L$ we have $\gamma_v = \gamma_v'$.
\end{prop}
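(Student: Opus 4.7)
\emph{Plan.} The strategy is to take stalks of \eqref{eq:res_mult} at each $u'B_n/B_n \in G_n/B_n$ with $\phi(u'B_n/B_n) = uB/B$ for $u \in L$, producing a linear system over $\ZM[q]$ that coincides with the system combinatorially defining $\gamma_v$. Upper unitriangularity of the coefficient matrix will then force $\gamma_v = \gamma_v'$.

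Fix $u \in L$ and write $u = u'z$ with $z$ the minimal-length representative of the coset $S_n x$ and $u' \in S_n$. Since $\phi$ is a closed embedding, stalks of any sheaf of the form $\phi^*\FS$ at $u'B_n/B_n$ coincide with stalks of $\FS$ at $uB/B$. Applying Theorem~\ref{thm:kl} to both sides of \eqref{eq:res_mult} gives
\[
P_{u,y}(q) \;=\; \sum_{v \in S_n x} \gamma_v'(q) \cdot P_{u,v}(q),
\]
each summand $\ic(\overline{BvB/B})$ contributing its stalk Poincar\'e polynomial $P_{u,v}(q)$. The sum can be restricted to $v \in L$: $\gamma_v' \ne 0$ forces $v \le y$ (by the support of $\phi^*\ic(\overline{ByB/B},\QM)$), $P_{u,v} \ne 0$ forces $u \le v$, and together with $x \le u$ this gives $v \in [x,y]\cap S_n x = L$. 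For $u \ne x$ the term $v = x$ vanishes since $P_{u,x} = 0$, leaving
\[
P_{u,y}(q) \;=\; \sum_{\substack{v \in L,\, v \ne x\\ u \le v}} \gamma_v'(q) \cdot P_{u,v}(q) \qquad (u \in L,\, u \ne x).
\]

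This is precisely the system obtained by expanding $r_{x,y,L} = \sum_{x \ne v \in L} P_{v,y}\delta_v$ in the Kazhdan--Lusztig basis $\{b_v\}$ and equating coefficients of $\delta_u$. Since $(P_{u,v})_{u,v \in L\setminus\{x\}}$ is upper unitriangular with respect to Bruhat order, the system has a unique solution in $\ZM[q]^{L\setminus\{x\}}$, so $\gamma_v = \gamma_v'$.

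The main technical subtlety lies in identifying the stalk Poincar\'e polynomial of each summand $\ic(\overline{BvB/B})$ in \eqref{eq:res_mult} with $P_{u,v}(q)$. Interpreting $\ic(\overline{BvB/B})$ as the IC sheaf on $G_n/B_n$ of the Schubert variety indexed by $v' = vz^{-1} \in S_n$, its stalk at $u'B_n/B_n$ is a priori only the $S_n$-Kazhdan--Lusztig polynomial $P^{S_n}_{u',v'}$, and one must invoke the classical equality $P^{S_n}_{u',v'} = P^{S_{n+1}}_{u,v}$ for elements in a common coset with minimal representative $z$. This in turn follows from the geometric identification $\phi(\overline{B_n v' B_n/B_n}) = \overline{BvB/B}\cap\phi(G_n/B_n)$ via a normally non-singular inclusion argument of the same flavour as in the preceding proof of purity of $\phi^*\ic(\overline{ByB/B},\QM)$.
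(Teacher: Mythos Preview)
Your proposal is correct and follows essentially the same approach as the paper: take Poincar\'e polynomials of stalks on both sides of \eqref{eq:res_mult}, observe that the resulting relations coincide with those defining the $\gamma_v$ in \S\ref{sec:inductive_combinatorial}, and conclude by unitriangularity. Your final paragraph usefully makes explicit a point the paper leaves implicit, namely that the stalk of $\ic(\overline{BvB/B})$ at $u'B_n/B_n$ is governed by $P_{u,v}$ rather than a potentially different $S_n$-polynomial; the paper's notation already tacitly identifies these.
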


\begin{proof}
  Taking Poincar\'e polynomials of stalks on both sides of
  \eqref{eq:res_mult} for any $x \ne v \in L$. We deduce the
  relations:
  \[
P_{v,y} = \sum_{x \ne u \in L} \gamma_v' P_{u,v}.
\]
These are (a repackaging of) the relations that determine the
$\gamma_v$ in \S \ref{sec:inductive_combinatorial}. The
proposition follows.
\end{proof}

We now reach the goal of this section:

\begin{thm}
  The Poincar\'e polynomial of $H^\bullet(Z', (i')^*\ic)$ is given by
  $I_{x,y,L}$. 
\end{thm}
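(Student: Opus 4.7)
The plan is to transfer the computation through the finite-stabilizer equivalence to $\CM^*_\lambda$-equivariant cohomology on $\dot{S}^I_{x,y}$, decompose the IC sheaf there using the Levi-slice decomposition just established, and finally read off each Poincar\'e polynomial by applying Proposition~\ref{prop:qkl} inside the smaller flag variety $G_n/B_n$.

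To begin, I would invoke the finite-stabilizer equivalence for $\CM^*_\lambda$ together with the fact that $\ic(\PM_\lambda \dot{S}_{x,y})$ pulls back to $\ic(\dot{S}_{x,y})$ and that $S_{x,y}$ is a normal slice in $\overline{ByB/B}$, obtaining
\[
H^\bullet(Z', (i')^*\ic) \;=\; H^\bullet_{\CM^*_\lambda}\bigl(\dot{S}^I_{x,y},\, \ic(\overline{ByB/B})|_{\dot{S}^I_{x,y}}\bigr).
\]
Next I would use Proposition~\ref{prop:slice_equal} to identify $S_x^I$ with the normal slice $S_{x,n}$ at $x' B_n/B_n$ inside $G_n/B_n$, and restrict \eqref{eq:res_mult} to this slice. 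Because normal-slice restriction commutes with direct sums and preserves IC sheaves (up to the shift absorbed by our conventions), this yields
\[
\ic(\overline{ByB/B})|_{S_x^I} \;=\; \bigoplus_{v \in L} \gamma'_v \cdot \ic(S_{x,v}^I),
\]
where $S_{x,v}^I \subset S_x^I$ corresponds to the normal slice at $x'$ in $\overline{B_n v' B_n/B_n}$ and the sum runs over $L = [x,y] \cap S_n x$.

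Removing the point $\{x\}$ kills the $v=x$ skyscraper summand, and Proposition~\ref{prop:gammas} identifies the remaining multiplicities with the combinatorial $\gamma_v$. Passing to $\CM^*_\lambda$-equivariant cohomology and invoking the finite-stabilizer equivalence once more then gives
\[
H^\bullet(Z', (i')^*\ic) \;=\; \bigoplus_{\substack{v \in L \\ v \ne x}} \gamma_v \cdot H^\bullet\bigl(\PM_\lambda \dot{S}_{x,v}^I,\, \ic\bigr).
\]

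Finally, each $\PM_\lambda \dot{S}_{x,v}^I$ is the $\lambda$-projectivization of the punctured normal slice at $x'$ to $\overline{B_n v' B_n/B_n}$ inside $G_n/B_n$, and $\lambda$ still acts attractively there since it commutes with $\gamma$ and retains positive weights on these transverse directions. Applying Proposition~\ref{prop:qkl} inside $G_n/B_n$ shows that the Poincar\'e polynomial of $IH^\bullet(\PM_\lambda \dot{S}_{x,v}^I, \QM)$ is $\qKL_{x',v'}$, and the classical invariance of Kazhdan-Lusztig polynomials under parabolic reduction gives $\qKL_{x,v} = \qKL_{x',v'}$. Summing reproduces $\sum_{v \in L,\, v \ne x} \gamma_v \cdot \qKL_{x,v} = I_{x,y,L}$, as required. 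The main technical obstacle is the second step: verifying that the direct-sum decomposition \eqref{eq:res_mult} really does distribute termwise under restriction to the Levi normal slice, producing summands indexed precisely by $v \in L$ with IC factor $\ic(S_{x,v}^I)$. This is standard but not fully automatic, as it requires compatibility of normal-slice restriction with the $T$-equivariant IC sheaves and with the $\CM^*_\lambda$-quotient structure already in play.
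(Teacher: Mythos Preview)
Your proposal is correct and follows essentially the same route as the paper: lift to $\CM^*_\lambda$-equivariant cohomology via the Finite Stabilizer Argument, identify $S_x^I$ with the Levi slice $S_{x,n}$ via Proposition~\ref{prop:slice_equal}, push the decomposition~\eqref{eq:res_mult} through this identification using Proposition~\ref{prop:gammas}, and then read off each summand's Poincar\'e polynomial via Proposition~\ref{prop:qkl}. The paper packages your flagged ``technical obstacle'' into a commutative square of inclusions $k_n, k, i'', i'$ together with the normal non-singularity of $S_{x,n} \hookrightarrow G_n/B_n$ and $S_x \hookrightarrow G/B$, which is exactly the compatibility you are asking for; your final appeal to $\qKL_{x,v} = \qKL_{x',v'}$ is implicit in the paper's identification of the slice varieties under $\phi$.
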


\begin{proof}
  Consider the commutative diagram
  \[
  \begin{tikzpicture}[yscale=.6,xscale=.9]
    \node (lf) at (0,4) {$G_n/B_n$};
    \node (ls) at (0,2) {$\dot{S}_{x,n}$};
    \node (sp) at (0,0) {$\PM_\lambda \dot{S}_{x,n}$};
    \draw[->] (ls) to node[right] {$k_n$} (lf);  \draw[->] (ls) to (sp);
        \node (bf) at (2,4) {$G/B$};
    \node (bs) at (2,2) {$\dot{S}_x$};
    \node (bp) at (2,0) {$\PM_\lambda \dot{S}_x$};
    \draw[->] (bs) to node[right] {$k$} (bf);  \draw[->] (bs) to (bp);
    \draw[->] (lf) to node[above] {$\phi$} (bf);
    \draw[->] (ls) to node[above] {$i''$} (bs); 
    \draw[->] (sp) to node[above] {$i'$} (bp);
  \end{tikzpicture}
\]
where we have used Proposition \ref{prop:slice_equal} to identify
$S_x^I$ with $S_{x,n}$. Because $\CM^*$ acts (via $\lambda$) on
$\dot{S}_x$ with finite stabilizers we can apply the Finite Stabilizer
Argument. We have
\[
H^\bullet(Z', (i')\ic) = H^\bullet_{\CM^*}(\dot{S}_{x,n}, (i'')^* \ic(
\dot{S}_{x,y},\QM))
= H^\bullet_{\CM^*}(\dot{S}_{x,n}, (i'')^* k^* \ic( \overline{ByB/B},\QM))
\]
where for the second equality we have used that $k$ is a normally
non-singular inclusion. Using the above commutative diagram, we can
rewrite this as:
\begin{gather*}
H^\bullet_{\CM^*}(\dot{S}_{x,n}, k_n^* \phi^*\ic(
\overline{ByB/B},\QM)) \cong \bigoplus_{x \ne v \in L}H^\bullet_{\CM^*} (\dot{S}_{x,n}, \gamma_v
\cdot k_n^* \ic(
\overline{BvB/B},\QM) \\
\cong \bigoplus_{x \ne v \in L}H^\bullet (\PM_\lambda \dot{S}_{x,n}, \gamma_v
\cdot \ic(\PM_\lambda \dot{S}_{x,v},\QM))
\end{gather*}
We used Proposition \ref{prop:gammas} for the first step, and the
normal nonsingularity of $S_{x,n} \into G_n/B_n$ and the Finite
Stabilizer Argument again
for the second step.
Taking Poincar\'e polynomials (and using Proposition \ref{prop:qkl})
we deduce that its Poincar\'e polynomial is
\[
\sum_{x \ne v \in L} \gamma_v \cdot \qKL_{x,v} = I_{x,y,L}. \qedhere
  \]
\end{proof}

\subsection{Geometry of weighted projective space} \label{sec:weightedproj}
We now turn to the
hypercube piece, which comes from a sheaf on $S_x^H/_\lambda
\CM^*$. We will see that this space is isomorphic to a weighted
projective space. In this section we discuss some preliminaries on
weighted projective spaces with torus action.

Suppose that a torus $T$ acts diagonally on $V = \bigoplus_{i=1}^p 
\CM e_i$. Assume moreover:
\begin{enumerate}
\item the weights of $T$ on $V$ are linearly independent;
\item we have fixed a cocharacter $\lambda : \CM^* \to T$, all of whose
  weights on $V$ (with respect to the action induced by $T$) are positive.
\end{enumerate}
Note that (1) implies:
\begin{enumerate}
\item[(3)] $T$ has finitely many orbits on $V$. Moreover, $T$-orbits on $V$
  are classified by subsets of $\{1, \dots, p\}$; given such a subset
  $I$ the corresponding $T$-orbit is
  \[
V_I = \left \{ \sum \lambda_{i\in I} e_i \; \middle | \; \lambda_i \in
  \CM^* \right \}.
\]
The stratification by the $\{ V_I \}$ will be called the
\emph{toric stratification} of $V$.
\item[(4)] For any subset $I \subset \{ 1, \dots, n \}$ we can find a
  cocharacter $\lambda_I : \CM^* \to T$ such that the induced action
  fixes each $e_i$ for $i \in I$ and has positive weights on the
  rest. In particular
  \[
\lim_{z \to 0} \lambda_I(z) \cdot v \in \bigoplus_{i \in I} \CM e_i
\]
for all $v \in V$.
\end{enumerate}

Denote the weighted projective space obtained as the
quotient by
\[
\PM_\lambda \dot{V} := ( V - \{ 0 \}) /_{\lambda} \CM^*.
\]
Statements (3) and (4) for $V$ imply the following for $\PM_\lambda \dot{V}$:
\begin{enumerate}
\item[(5)] $T$ has finitely many orbits on $\PM_\lambda
  \dot{V}$. Moreover, these orbits are classified by non-empty subsets
  of $\{1, \dots, p \}$. Given such a subset $I$ the corresponding
  $T$-orbit is $\OC_I$, given by the image of $V_I$ in $\PM_\lambda
  \dot{V}$. We will refer to this stratification as the \emph{toric stratification} of $\PM_\lambda \dot{V}$.
\item[(6)] For any $\emptyset \ne I \subset \{1, \dots, p \}$ we set
  \[
U_I = \bigcup_{I \subset J} \OC_J = \{ [\lambda] \; | \; \lambda =
\sum \lambda_i e_i, \; \lambda_i \ne 0 \text{ if } i \in I \}.
\]
\end{enumerate}
Note that $U_{\{1\}}, \dots, U_{\{p\}}$ form an open covering of $\PM_\lambda
\dot{V}$ by finite quotients of $\CM^{p-1}$. Also, (4) above implies
that, for all $\emptyset \ne I \subset \{1,\dots, p \}$,
\begin{equation} \label{eq:Uretract}
  \text{$U_I$ retracts equivariantly onto $\OC_I$.}
\end{equation}

\subsection{Equivariant sheaves on vector spaces} \label{sec:shvec}
Let $T$ and $V =
\bigoplus \CM e_i$ be as
above. (In applications, $V$ will be a slice to a stratum in weighted
projective space. Hence, it will not be the ``same $V$'' as in the
previous section. We hope that this does not lead to too much confusion.) In particular, the weights of $T$ on $V$ are
linearly independent and we have the stratification $V_I$ by subsets
of $\{ 1, \dots, p \}$ (so $V_{\emptyset} = \{ 0 \}$ and
$V_{\{1,\dots, p\}}$ consists of vector all of whose coordinates are
  non-zero.)

  The aim of this section is to prove the following:

  \begin{prop} \label{prop:pure}
    Consider $\FS \in D^b_T(V,\QM)$. Suppose that:
    \begin{enumerate}
    \item The stalks of $\FS$ are pure of weight zero.
    \item The restriction of $\FS$ to each stratum is isomorphic to a
      direct sum of even shifts of constant sheaves.
    \item For any $I$ the restriction map
      \[
H^\bullet_T(V,\FS) \to H^\bullet_T(V_I, \FS_{|V_I})
\]
is surjective.
\end{enumerate}
Then $\FS$ is pure of weight zero.
  \end{prop}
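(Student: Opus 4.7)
The plan is to prove that $\FS$ decomposes as a direct sum of shifts of pushforwards $\iota_{I,*}\QM_{\overline{V_I}}[-2k]$, where $\iota_I : \overline{V_I} \hookrightarrow V$ is the closed embedding of a stratum closure. Because each $\overline{V_I}$ is an affine space, its constant sheaf is pure of weight $0$, so any such direct sum is pure. I would proceed by descending induction on the support of $\FS$, peeling off one family of summands at a time, starting from maximal strata.

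For the inductive step, fix a maximal stratum $V_J$ in the support of $\FS$. By hypothesis (2), $\FS_{|V_J} \cong \bigoplus_k \QM_{V_J}[-2k]^{\oplus a_k^J}$, and each summand corresponds to a class in $H^{2k}_T(V_J, \FS_{|V_J}) = \Hom(\QM_{V_J}[-2k], \FS_{|V_J})$. Hypothesis (3) lifts each such class to $H^{2k}_T(V, \FS) = \Hom(\QM_V[-2k], \FS)$, giving a morphism $\QM_V[-2k] \to \FS$. Using the Attractive Proposition together with the retractions of item (4) of \S\ref{sec:weightedproj} (which contract $V$ equivariantly onto stratum closures via the cocharacters $\lambda_I$), I would show that these morphisms factor through $\iota_{J,*}\QM_{\overline{V_J}}[-2k]$ and produce a direct-summand inclusion in $\FS$.

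Once the summand $\bigoplus_k \iota_{J,*}\QM_{\overline{V_J}}[-2k]^{\oplus a_k^J}$ is split off, the resulting quotient $\FS'$ has strictly smaller support. One verifies that $\FS'$ again satisfies hypotheses (1), (2), (3) with respect to the restricted stratification: (1) and (2) are immediate from the direct-sum decomposition, while (3) follows from the long exact sequence in equivariant cohomology associated to the short exact sequence of sheaves, combined with the surjectivity for $\FS$ and the explicit shape of the split-off summand. Iterating the procedure exhausts the support of $\FS$ and yields the desired pure decomposition.

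The main obstacle lies in the splitting step: one must argue that the lifts produced by (3) are genuine sections, i.e., that the composition $\QM_V[-2k] \to \FS \to \FS_{|V_J}$ really recovers the summand inclusion one started with, rather than an arbitrary lift differing by terms supported on smaller strata. The input here is the combination of (1) and (2): pure, even-concentrated stalks force vanishing of the relevant $\Ext^{>0}_T$-obstructions between constant sheaves on stratum closures, so any two lifts agree up to contributions from smaller strata (which are absorbed into later stages of the induction). Equivalently, a weight-filtration argument in the spirit of the Attractive Weight Argument shows that the cohomological surjectivity in (3), which holds at the level of \emph{pure} target groups, cannot be realized by weight-mixed preimages; this forces each weight-filtration extension in $\FS$ to split, giving purity.
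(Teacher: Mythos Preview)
Your overall strategy---decomposing $\FS$ as a direct sum of shifted constant sheaves on stratum closures by peeling off summands from maximal strata---matches the paper's. However, the paper inserts a crucial preliminary reduction that you omit: it first shows that conditions (1)--(3) are preserved by the standard truncation functors $\tau_{\le k}$ and $\tau_{>k}$, and that if the proposition holds for both pieces of the truncation triangle then it holds for $\FS$ (the triangle splits by purity). This reduces to the case where $\FS$ is concentrated in a single cohomological degree, say degree~$0$.

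This reduction is exactly what makes the splitting step work. With $\FS$ in degree $0$, once one has the map $\beta : \QM_{\overline{V_J}} \to \FS$ lifted via condition (3), the paper constructs a left inverse $\alpha$ as follows: the chosen projection $\FS_{|V_J} \to \QM_{V_J}$ gives by adjunction a map $\FS \to j_*\QM_{V_J}$, and applying $\tau_{\le 0}$ yields $\alpha : \FS = \tau_{\le 0}\FS \to \tau_{\le 0} j_*\QM_{V_J} = \QM_{\overline{V_J}}$. Then $\alpha\circ\beta$ is an endomorphism of $\QM_{\overline{V_J}}$ that restricts to the identity on $V_J$, hence is the identity. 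Without the reduction to a single degree this truncation trick is unavailable, and your splitting argument is incomplete.

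Your identification of ``the main obstacle'' is also slightly off: that a lift restricts back to the original class on $V_J$ is tautological. The genuine obstacle is producing a global \emph{projection} $\FS \to \QM_{\overline{V_J}}[-2k]$ splitting $\beta$. Your appeal to $\Ext$-vanishing and weight-filtration arguments does not supply this; you would need, e.g., vanishing of $\Ext^1_T$ between shifted constant sheaves on different stratum closures, which is neither obvious nor how the paper proceeds.
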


  \begin{remark}
    This is close to several results in the literature. As the reader
    will see, it uses in a crucial way in the proof that all
    subvarieties $\overline{V_I}$ are smooth. The analogue of this
    theorem for a general attractive fixed point is false.
  \end{remark}

  \begin{proof}
    Consider the trunction functors $\tau_{\le k}$ and $\tau_{>k}$
    associated to the standard (i.e. not the perverse!) $t$-structure
    on $D^b_T(V,\QM)$. We first argue that each sheaf in the
    distingushed triangle
    \begin{equation} \label{eq:triangle}
\tau_{\le k} \FS \to \FS \to \tau_{>k} \FS \triright
\end{equation}
saisfies the conditions (1), (2) and (3) of the theorem. (1) and (2)
are clear, as $\tau_{\le k}$ and $\tau_{>k}$ have a predictable
effect on stalks. For (3), note that from the existence of an
attractive cocharacter we know by the Attractive Proposition that the
restriction map
\[
H^\bullet_T(V, \FS) \to H^\bullet_T(\{0\},\FS_0)
\]
is an isomorphism. Hence we have a commutative diagram:
\[
  \begin{tikzpicture}[xscale=1.5,yscale=.8]
\node (ull) at (-4,0) {$0$} ;
\node (ul) at (-2,0) {$H^\bullet_T(\{0\}, (\tau_{\le k}\FS)_0)$} ;
\node (u) at (0,0) {$H^\bullet_T(\{0\},\FS_0)$};
\node (ur) at (2,0) {$H^\bullet_T(\{0\}, (\tau_{> k}\FS)_0)$} ;
\node (urr) at (4,0) {$0$} ;
\node (lll) at (-4,-2) {$0$} ;
\node (ll) at (-2,-2) {$H^\bullet_T(\{0\}, (\tau_{\le k}\FS)_{V_I})$} ;
\node (l) at (0,-2) {$H^\bullet_T(\{0\},\FS _{V_I})$};
\node (lr) at (2,-2) {$H^\bullet_T(\{0\}, (\tau_{> k}\FS) _{V_I})$} ;
\node (lrr) at (4,-2) {$0$} ;
\draw[->] (ull) to (ul); \draw[->] (ul) to (u); \draw[->] (u) to (ur);
\draw[->] (ur) to (urr);
\draw[->] (lll) to (ll); \draw[->] (ll) to (l); \draw[->] (l) to (lr);
\draw[->] (lr) to (lrr);
\draw[->] (ul) to (ll); \draw[->] (u) to (l); \draw[->] (ur) to (lr);
\end{tikzpicture}
\]
(The rows are short exact by parity vanishing of $T$-equivariant
cohomology of $T$-orbits). The middle arrow is a
surjection, hence so are the left and right arrows by the 5-lemma\footnote{or
  more precisely the 4-lemma\dots}. Thus (1), (2) and (3) are true for
each term in \eqref{eq:triangle}.

If we know the theorem for the left and right terms in
\eqref{eq:triangle}, then each term is isomorphic to a direct sum of
even shifts of constant sheaves on the closure of the strata $V_I$, by
purity. In
particular, $\Hom(\tau_{>k} \FS , \tau_{\le k} \FS[1]) = 0$ and hence
\eqref{eq:triangle} splits, and the theorem is true for $\FS$ too. In particular, it is enough to show
the proposition for the left and right terms. By induction, we may
assume that $\FS$ is concentrated in degree zero.

Fix a stratum $\OC := V_I$ which is maximal (i.e. open) in the support of
$\FS$. (Of course $\OC$ many not be unique, but this won't worry us.) We will argue
  that a choice of summand $\QM_{\OC} \subset \FS_{\OC}$ leads to a
  summand $\QM_{\overline{\OC}}\subset \FS$.
  
Let $j$ denote the open inclusion of all strata above $V_I$ into
$V$. (This subvariety was called $U_I$ in \S\ref{sec:weightedproj}.)
By the Attractive Proposition we have
\[ \Hom(\QM_{\overline{\OC}}, \FS) = H^0_T({\overline{\OC}}, j^*\FS) =
    H^0_T(\{0\},\FS_0) =
H^0_T(V,\FS). \]
Because we have a surjection
  \[
H^0_T(V,\FS) = \Hom(\QM_{\overline{\OC}}, \FS) \onto \Hom(\QM_{\OC}, \FS_{\OC}) = H^0_T(\OC, \FS_{\OC})
\]
we may find a map $\beta : \QM_{\overline{\OC}} \to \FS$ whose restriction to $\OC$ is
the inclusion of our summand. On the other hand, our chosen
projection $\FS_\OC \to \QM_\OC$ corresponds (via adjunction) to a morphism
\[
\FS \to j_*\QM_{\OC}
\]
which restricts to our chosen map on the open stratum. Applying
$\tau_{\le 0}$ we get a morphism
\[
\alpha: \tau_{\le 0} \FS = \FS \to \tau_{\le 0}  j_*\QM_{\OC} = \QM_{\overline{\OC}}
\]
which still agrees with our chosen projection on the open
stratum. Composing we get $\alpha \circ \beta : \QM_{\overline{\OC}}
\to \QM_{\overline{\OC}}$ which is the identity on the open
stratum. As
\[
\Hom(\QM_{\overline{\OC}}, \QM_{\overline{\OC}}) \to
\Hom(\QM_{\OC}, \QM_{\OC})
\]
is an isomorphism, we deduce that $\alpha \circ \beta$ is an
isomorphism everywhere. In other words, we have found
$\QM_{\overline{\OC}}$ as a summand of $\FS$. Continuing in this way,
we deduce that we can find an isomorphism:
\[
\FS = \bigoplus_{I \subset \{ 1, \dots, p\}}
\QM_{\overline{V_I}}^{\oplus m_I}.
\]
As each ${\overline{V_I}}$ is smooth, $\QM_{\overline{V_I}}$ is pure
of weight zero, hence so is $\FS$ and we are done.
  \end{proof}

Recall the notation of the previous subsection on weighted projective
spaces. The previous statement has the following corollary, which will
be important below:
  
  \begin{cor} \label{cor:pure}
    Suppose that $\FS$ is a sheaf on weighted projective space
    $\PM_\lambda \dot{V}$. Suppose that:
    \begin{enumerate}
    \item The stalks of $\FS$ are pure.
    \item The restriction of $\FS$ to each toric stratum $\OC_I$ is
      isomorphic to a direct sum of even shifts of constant sheaves.
    \item For $i \in \{1, \dots, p\}$ and any $J \subset \{1, \dots,
      p\}$ containing $i$ the restriction map
      \[
H^\bullet_T(U_{\{ i \}},\FS_{U_{\{i\}}}) \to H^\bullet_T(\OC_J, \FS_{\OC_J})
\]
is surjective.
\end{enumerate}
Then $\FS$ is pure.
\end{cor}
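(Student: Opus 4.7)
The plan is to deduce the corollary from Proposition \ref{prop:pure} by localizing on the open cover $U_{\{1\}}, \dots, U_{\{p\}}$ of $\PM_\lambda \dot{V}$ discussed in \S\ref{sec:weightedproj}. Purity is a local property, so $\FS$ is pure if and only if each $\FS_{|U_{\{i\}}}$ is pure. The point is that each $U_{\{i\}}$ is a finite quotient $\CM^{p-1}/F_i$, so Proposition \ref{prop:pure} applies after passing to the equivariant cover.

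More precisely, for each $i$ consider the affine chart $V'_i \cong \CM^{p-1}$ parametrizing $e_i + \sum_{j \ne i} z_j e_j \in V_{\{i\}} + \bigoplus_{j \ne i} \CM e_j$. The torus $T$ acts on $V'_i$ with linearly independent weights (these weights are the differences of the weights on $e_j$ and $e_i$, which are linearly independent by assumption (1) in \S\ref{sec:weightedproj}). The natural map $V'_i \to U_{\{i\}}$ is a quotient by the finite group $F_i$ of $\lambda(\CM^*)$-stabilizers, so by the Finite Stabilizer Argument we obtain a fully faithful embedding $D^b_T(U_{\{i\}},\QM) \into D^b_T(V'_i,\QM)$; let $\tilde{\FS}_i$ denote the image of $\FS_{|U_{\{i\}}}$. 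The toric stratification of $V'_i$ by subsets of $\{1,\dots,p\}\setminus\{i\}$ matches the stratification of $U_{\{i\}}$ by the orbits $\OC_J$ for $J \ni i$.

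With this setup, I verify the three hypotheses of Proposition \ref{prop:pure} for $\tilde{\FS}_i$. Condition (1) (pure stalks) is immediate, since stalks of $\tilde{\FS}_i$ on $V'_i$ agree with stalks of $\FS$ at corresponding points, and these are pure by hypothesis (1). Condition (2) (constancy on strata, with even shifts) translates directly under the identification of the toric stratifications, using hypothesis (2). Condition (3) (surjective restriction to each stratum) is where one uses the remaining hypothesis: under the Finite Stabilizer Argument
\[
H^\bullet_T(V'_i, \tilde{\FS}_i) = H^\bullet_T(U_{\{i\}}, \FS_{|U_{\{i\}}}), \qquad H^\bullet_T((V'_i)_K, (\tilde{\FS}_i)_{|(V'_i)_K}) = H^\bullet_T(\OC_{K \cup \{i\}}, \FS_{|\OC_{K \cup \{i\}}}),
\]
so surjectivity of the restriction for $\tilde{\FS}_i$ is exactly the content of hypothesis (3) for $J = K \cup \{i\}$.

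Once all three hypotheses are verified, Proposition \ref{prop:pure} yields that $\tilde{\FS}_i$ is pure of weight zero on $V'_i$, and hence $\FS_{|U_{\{i\}}}$ is pure as well. Since $\{U_{\{i\}}\}$ is an open cover, $\FS$ is pure on $\PM_\lambda \dot{V}$, completing the proof. The only real subtlety is bookkeeping the indexing under the chart $V'_i \to U_{\{i\}}$ (making sure the toric strata match and that the finite quotient does not interfere with the weight argument, which is safe over $\QM$); the substantive content of the statement is entirely absorbed into Proposition \ref{prop:pure}.
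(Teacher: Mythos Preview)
Your proof is correct and follows exactly the approach the paper sketches: reduce to each open chart $U_{\{i\}}$, pass to the linear cover $V'_i \cong \CM^{p-1}$ via the Finite Stabilizer Argument, and invoke Proposition~\ref{prop:pure}. The paper's own proof is the two-sentence version of what you wrote; your expansion of the bookkeeping (matching toric strata under $V'_i \to U_{\{i\}}$ and translating hypothesis~(3)) is precisely what is being suppressed there.
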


\begin{proof}
  It is enough to prove that the restriction of $\FS$ to each $U_{\{i
    \}}$ is pure. This then follows (using the Finite Stabilizer
  Argument) from Proposition
  \ref{prop:pure}.
\end{proof}

The following allows us to compute Poincar\'e polynomials of global
sections from Poincar\'e polynomials of stalks:

\begin{prop} \label{prop:PP}
  Suppose that $\FS \in D^b(\PM_\lambda \dot{V}, \QM)$ is a pure sheaf
  whose restriction to each toric stratum is a direct sum of constant
  sheaves in even degrees. Then the Poincar\'e polynomial of $H^\bullet(\PM_\lambda
  \dot{V},\FS)$  is given by
\[
\sum_{\emptyset \ne I \subset \{1, \dots, p \}} (q-1)^{|I|-1} P(\FS_{x_I})
\]
where $x_I$ denotes a point in the toric stratum $\OC_I$, and $P(\FS_{x_I})$
denotes the Poincar\'e polynomial of the stalk.
\end{prop}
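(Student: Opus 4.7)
The plan is to use purity and the decomposition theorem to write $\FS$ as a direct sum of constant sheaves supported on the (rationally smooth) closures $\overline{\OC_I}$, then match both sides by a Möbius inversion on the Boolean lattice together with a simple binomial identity.

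First I would observe that each closure $\overline{\OC_I}$ is the image of $\bigoplus_{i\in I}\CM e_i\setminus\{0\}$ in $\PM_\lambda\dot V$ under the quotient by the $\lambda$-action, hence is itself a weighted projective space of complex dimension $|I|-1$. In particular it is rationally smooth, so $\ic(\overline{\OC_I},\QM)=\QM_{\overline{\OC_I}}$, and its rational cohomology has Poincar\'e polynomial $1+q+\cdots+q^{|I|-1}=(q^{|I|}-1)/(q-1)$.

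Next, using that $\FS$ is pure and (by hypothesis) its restriction to each toric stratum is a direct sum of even shifts of constant sheaves, I would invoke the decomposition theorem to produce
\[
\FS\;\cong\;\bigoplus_{\emptyset\ne I\subset\{1,\dots,p\}}\QM_{\overline{\OC_I}}\otimes V_I
\]
for graded $\QM$-vector spaces $V_I$ concentrated in even degrees (the stratum restrictions being constant rules out nontrivial local systems, and the parity of each $V_I$ follows inductively from the parity of the stratum restrictions). Taking stalks at a point $x_J\in\OC_J$ and noting that $x_J\in\overline{\OC_I}$ iff $J\subseteq I$, we obtain $P(\FS_{x_J})=\sum_{I\supseteq J}P(V_I)$, which Möbius-inverts to
\[
P(V_I)\;=\;\sum_{J\supseteq I}(-1)^{|J|-|I|}P(\FS_{x_J}).
\]

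To finish, I would take global sections, yielding
\[
P(H^\bullet(\PM_\lambda\dot V,\FS))=\sum_{\emptyset\ne I}P(V_I)\cdot\frac{q^{|I|}-1}{q-1},
\]
substitute the Möbius expression, swap the order of summation, and reduce the claim to the identity
\[
\sum_{\emptyset\ne I\subseteq J}(-1)^{|J|-|I|}\frac{q^{|I|}-1}{q-1}=(q-1)^{|J|-1},
\]
which follows immediately from $\sum_\ell\binom{k}{\ell}(-1)^{k-\ell}q^\ell=(q-1)^k$ together with $\sum_\ell\binom{k}{\ell}(-1)^{k-\ell}=0$ for $k\ge 1$. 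The main obstacle is justifying the initial decomposition: this requires the decomposition theorem for pure complexes, the rational smoothness of every $\overline{\OC_I}$ (to collapse the relevant IC sheaves to constant sheaves), and the parity hypothesis (to exclude odd shifts). Everything thereafter is formal inclusion–exclusion.
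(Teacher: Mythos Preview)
Your proof is correct and follows essentially the same approach as the paper: decompose $\FS$ as a direct sum of (even shifts of) constant sheaves $\QM_{\overline{\OC_I}}$ using purity and the hypothesis on stratum restrictions, then reduce to a binomial identity. The only organizational difference is that the paper observes both sides of the formula are additive in $\FS$ and hence checks it directly for each building block $\QM_{\overline{\OC_I}}$ (verifying $\sum_{\emptyset\ne J\subseteq I}(q-1)^{|J|-1}=1+q+\cdots+q^{|I|-1}$), whereas you keep track of the multiplicities $V_I$ via M\"obius inversion and arrive at the inverse identity $\sum_{\emptyset\ne I\subseteq J}(-1)^{|J|-|I|}(q^{|I|}-1)/(q-1)=(q-1)^{|J|-1}$; these two identities are equivalent and the arguments are the same in substance.
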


\begin{proof}
  If the formula is true for $\FS_1$ and $\FS_2$, then it is true for
  their sum. Hence (using our purity assumption on $\FS$ to deduce
  that $\FS$ is a direct sum of $IC$-sheaves, and then our assumptions
  on the restriction to each stratum to deduce that only constant
  sheaves show up) it is enough to
  check the formula when
\[
  \FS = \ic(\overline{\OC_I},\QM) =
  \QM_{\overline{\OC_I}}. \]
In this case, $\overline{\OC_I}$ is a weighted projective space of
dimension $|I|-1$ and its Poincar\'e polynomial is
\[
1 + q + \dots + q^{|I|-1}.
\]
Our formula predicts that this should equal
\[
\sum_{\emptyset \ne J \subset I} (q-1)^{|J|-1} = \sum_{0 \ne k \le |I|}
{ |I| \choose k }(q-1)^{k-1}
\]
This is easily checked.\footnote{For example, by a) multiplying both terms by
$(q-1)$, then adding $1$ and using the binomial theorem, or b) considering the
toric stratification of projective space and counting points.}
\end{proof}
 
\subsection{The hypercube piece:
  geometry} \label{sec:hypercube_geometry} In \S
\ref{sec:hypercube_sheaves} and \S \ref{sec:hypercube combinatorics}
we establish Theorem \ref{thm:hypercube}, which describes the Poincar\'e
polynomial for hypercube piece. Here we begin by describing the
(rather simple) geometry involved in the hypercube piece.

\begin{remark}
  The simple geometry of the hypercube piece reminds one of the
  importance of the ``miraboic subgroup'' in type $A$. This suggests
  that generalization these results to other types might need new ideas.
\end{remark}

Recall that $S_x^H$ denotes subvariety of $S_x$ where all coordinates are zero, except
for those below the 1 in the $0^{th}$ column. For example, if $x =210345$ then
\begin{equation*}
  S_x^H = \left ( \begin{matrix}
    0 & 0 & 1 & 0 & 0 & 0\\ 0 & 1 & 0 & 0 & 0 & 0 \\ 1 & 0 & 0 & 0 & 0 & 0\\ 0 &
    0 & * & 1 & 0 & 0\\ 0 & 0 & * & 0 & 1 & 0\\ 0 & 0 & * & 0 & 0 & 1\end{matrix} \right )
\end{equation*}
It will be useful for the reader to keep the explicit form of $S_x^H$
in mind below.

Recall that in \S \ref{sec:equations} we gave equations cutting out
$S_{x,y}$ inside the affine space $S_x$. In general these equations
are complicated. However in simple
situations one can be lucky:

\begin{prop}
  $S_{x,y}^H \cong \CM^p$ for some $p$.
\end{prop}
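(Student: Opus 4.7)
The plan is to show that the equations cutting out $S_{x,y}^H$ inside the affine space $S_x^H$ are all coordinate hyperplanes $\{a_i = 0\}$, so that $S_{x,y}^H$ is a coordinate subspace of $S_x^H$, hence isomorphic to $\CM^p$ for some $p$.

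Setting $m := x^{-1}(0)$, the explicit description in \S\ref{sec:slices} shows that any matrix $g \in S_x^H$ agrees with the permutation matrix $\dot x$ outside of column $m$, while column $m$ of $g$ consists of a fixed $1$ at row $0$ together with free variables $a_i$ at rows $i$ ranging over a subset $F \subset \{1, \dots, n\}$. Thus $S_x^H \cong \CM^{|F|}$ with coordinates $(a_i)_{i \in F}$, and every column of $g$ other than column $m$ equals $e_{x(j)}$ for the appropriate $j$.

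I would then analyze each rank condition $\rank(g^{\le(p,q)}) \le \rank(\dot y^{\le(p,q)})$ from \eqref{eq:Schubeq}, showing that it is either automatic on $S_x^H$ or forces $a_i = 0$ for certain $i \in F$. Three cases cover everything: if $q < m$, column $m$ is absent from $g^{\le(p,q)}$ and the corner agrees with $\dot x^{\le(p,q)}$, so the inequality is automatic from $x \le y$; if $q \ge m$ and $p = 0$, the fixed $1$ at $(0,m)$ contributes an independent direction $e_0$ (no other column involves $e_0$ since $x(m) = 0$), so the rank equals $\rank(\dot x^{\le(0,q)})$ regardless of the $a_i$'s; if $q \ge m$ and $p \ge 1$, the column $m$ restricted to rows $\ge p$ becomes $\sum_{i \in F,\, i \ge p} a_i e_i$, while the remaining columns span $\{e_i : i \in J(p,q)\}$ for $J(p,q) := \{x(j) : j \le q,\, j \ne m,\, x(j) \ge p\}$, and the rank exceeds $|J(p,q)| = \rank(\dot x^{\le(p,q)})$ by exactly $1$ precisely when some $a_i \ne 0$ with $i \in (F \cap [p,n]) \setminus J(p,q)$. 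Combining with $x \le y$, each such inequality is either automatic or equivalent to $a_i = 0$ for all $i$ in a specific subset of $F$.

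Taking the intersection over all $(p,q)$, the subvariety $S_{x,y}^H$ is cut out of $S_x^H \cong \CM^{|F|}$ by coordinate hyperplanes and is therefore itself a coordinate subspace, isomorphic to $\CM^p$ with $p$ the number of surviving coordinates. The only real work is careful bookkeeping of the indices appearing in the three cases; no serious obstacle arises, since the structural fact that matrices in $S_x^H$ differ from $\dot x$ only in column $m$ forces each nontrivial rank condition to test only the vanishing of individual coordinate functions $a_i$.
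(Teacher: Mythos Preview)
Your argument is correct and reaches the same conclusion as the paper: the rank inequalities \eqref{eq:Schubeq} cut out $S_{x,y}^H$ inside $S_x^H$ by coordinate hyperplanes, so the result is a coordinate subspace. The paper's route is slightly slicker: rather than a case analysis in $(p,q)$ computing the rank of each corner directly, it observes that every minor of a corner $g^{\le(p,q)}$ either avoids column $m$ (and is then a constant minor of the permutation matrix $\dot x$) or includes column $m$, in which case it is the determinant of a matrix whose columns are a single column of variables together with columns from a permutation matrix. Expanding such a determinant gives $0$ or $\pm a_i$ for some $i$; hence every defining equation is $a_i = 0$. Your rank computation unpacks the same linear algebra more explicitly---it is longer but has the virtue of identifying exactly which $(p,q)$ force which $a_i$ to vanish, information the paper's argument does not make explicit. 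Either way the key structural input is the same: matrices in $S_x^H$ differ from $\dot x$ only in column $m$.
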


\begin{proof}
  Consider an $(l \times l)$-matrix of the form
\[   \left ( \begin{matrix} & & & x_1 \\ & P & & \vdots \\ & & &
      x_l  \end{matrix} \right ) \]
where $P$ is an $(l-1) \times l$-matrix of 0's and 1's having at most one 1 in
each row and column. Then its determinant is either 0 or $\pm x_i$ for
some $1 \le i \le l$, as one sees easily by expanding the determinant
from left to right. Applying this observation to the above
equations implies that $S_{x,y}^H \subset S_x^H$ is cut out by the
vanishing of certain coordinates in the $m^{th}$-column. The
proposition follows.
\end{proof}

Denote by $\e_i$ the
character of the torus given by $\diag (\lambda_0, \lambda_1, \dots,
\lambda_n) \mapsto \lambda_i$. The following is an easy direct
calculation:

\begin{lem}
  The $T$-weights on $S_x^H$ are all distinct, and belong to $\{ e_i - \e_0\}_{i = 1}^n$. In
  particular, $T$-has finitely many orbits on $S_x^H$. 
\end{lem}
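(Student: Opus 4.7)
The plan is to reduce the statement to an explicit coordinate calculation of $T$-weights on the affine chart $xU_-$, then restrict to the column where the free variables of $S_x^H$ live.

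First I would unpack the coordinates: in the chart $xU_-$ each point has the form $\gamma = xu$ with $u \in U_-$, so $\gamma_{ij} = u_{x^{-1}(i),j}$; in particular the free entry at row $i$, column $j$ (with $j < x^{-1}(i)$ and $i > x(j)$) corresponds to the $U_-$-coordinate $u_{x^{-1}(i),j}$. Next I would compute the $T$-action in these coordinates. For $t \in T$ the relation
\[
t \cdot \gamma B = t x u B = x (x^{-1}tx) u (x^{-1}tx)^{-1} B
\]
together with the fact that $(x^{-1}tx)_{kk} = \lambda_{x(k)}$ shows that conjugation rescales the $U_-$-coordinate $u_{k,j}$ by $\lambda_{x(k)} \lambda_{x(j)}^{-1}$. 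Translating this back to the matrix entry at position $(i,j)$ (so $k = x^{-1}(i)$) yields that the $T$-weight of the free coordinate at $(i,j)$ is $\varepsilon_i - \varepsilon_{x(j)}$.

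Now I restrict to $S_x^H$. By definition every free coordinate of $S_x^H$ sits in the $m$-th column, where $m = x^{-1}(0)$ and $x(m) = 0$. Plugging $j = m$ into the previous weight formula gives weight $\varepsilon_i - \varepsilon_{x(m)} = \varepsilon_i - \varepsilon_0$. Since the free entries in column $m$ lie strictly below the $1$ at $(0,m)$, the row index satisfies $i \ge 1$, so each weight lies in $\{\varepsilon_i - \varepsilon_0\}_{i=1}^n$. Distinctness is immediate: two different free coordinates occupy two different rows, hence contribute two different weights $\varepsilon_i - \varepsilon_0$.

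For the final sentence, I would observe that because $S_x^H$ is a $T$-stable affine space and the weights on a basis are pairwise distinct, the $T$-action factors through the coordinate torus $(\mathbb{G}_m)^p \subset \operatorname{GL}(S_x^H)$ acting diagonally on the basis, and this coordinate torus has finitely many orbits (indexed by subsets of the set of coordinates, as in the toric stratification discussed in \S\ref{sec:weightedproj}). This immediately forces $T$ to have finitely many orbits on $S_x^H$. The only step requiring genuine care is getting the weight formula on the chart $xU_-$ correct; once that is nailed down, the rest is bookkeeping because the condition $j = m$ collapses the ambient weight set to the single line of weights $\varepsilon_i - \varepsilon_0$.
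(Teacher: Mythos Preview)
Your calculation is correct and matches what the paper intends: the paper states the lemma as ``an easy direct calculation'' and gives no proof, so your explicit weight computation via conjugation on the $xU_-$-chart is exactly the expected argument.

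One small imprecision in your final paragraph: pairwise distinctness of the weights is not by itself enough to force $T$ to have finitely many orbits. (For instance, $\CM^*$ acting on $\CM^2$ with distinct weights $\chi$ and $2\chi$ has infinitely many orbits.) What you actually need is that the homomorphism $T \to (\CM^*)^p$ is surjective, i.e.\ that the weights are linearly \emph{independent}. This does hold here, since you have already shown the weights form a subset of $\{\varepsilon_i - \varepsilon_0\}_{i=1}^n$, which is linearly independent in the character lattice. Once that is said, the toric stratification of \S\ref{sec:weightedproj} (whose hypothesis~(1) is precisely linear independence of weights) applies exactly as you indicate.
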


Recall that
\[
Z = \PM_\lambda \dot{S}_{x,y}
\]
where $\lambda$ is a cocharacter acting attractively on the slice
$S_{x,y}$. We deduce from the above that $Z$ is a weighted projective
space, and hence results of \S\ref{sec:weightedproj}
and S\ref{sec:shvec} apply.

\begin{prop} \label{prop:intersections}
  $Z = \PM_\lambda S_{x,y}^H$ is isomorphic to a weighted projective
  space, and $T$ has finitely many orbits on $Z$. Moreover, each toric
  stratum in $Z$ is the intersection with 
  a unique Schubert stratum $\PM_\lambda \dot{S}^u_{x,y}$ for $u \in   (x,y]$. 
\end{prop}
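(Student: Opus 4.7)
The plan is to assemble the three claims from, respectively, the previous proposition (that $S_{x,y}^H \cong \CM^p$), the previous lemma (that the $T$-weights on $S_x^H$ are distinct and lie among $\{\e_i-\e_0\}_{i=1}^n$), and the general theory developed in \S\ref{sec:weightedproj}. Since $S_{x,y}^H$ is a $T$-stable affine subspace of $S_x^H$, the $T$-action on $S_{x,y}^H$ is linear, and the weights---being a subset of those on $S_x^H$---are also pairwise distinct, hence linearly independent in the sense of \S\ref{sec:weightedproj}. Moreover, $\lambda$ has positive weights on $S_x$ (by the attractiveness of $x$), and hence on $S_{x,y}^H$. Conditions (1) and (2) of \S\ref{sec:weightedproj} are thus satisfied, and (5) of that section then says exactly that $Z = \PM_\lambda \dot S_{x,y}^H$ is a weighted projective space on which $T$ acts with finitely many orbits, indexed by the non-empty subsets $I$ of a certain coordinate set.

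For the identification with Schubert strata, first observe that the Schubert stratification of $\overline{ByB/B}$ is $T$-invariant, so each toric stratum $\OC_I \subset Z$ (being a single $T$-orbit by construction) is contained in $\PM_\lambda \dot S_{x,y}^u$ for a \emph{unique} $u \in (x,y]$. The content of the proposition is the converse: distinct toric strata of $Z$ lie in distinct Schubert strata. For this I would argue directly from the matrix description of $S_x^H$. A point of $S_x^H$ has support in a subset $I$ of the free coordinates in the $m$-th column (where $m = x^{-1}(0)$); each such coordinate corresponds to a $T$-weight $\e_i - \e_0$ for a specific $i$, and standard row/column operations over $B$ reduce any matrix with support exactly $I$ to a distinguished permutation matrix $u_I$ obtained from $x$ by multiplication with a specific product of transpositions $(0, x(i))$ indexed by $I$. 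Consequently the entire toric stratum $\OC_I$ lies in $Bu_IB/B$, and since the map $I \mapsto u_I$ is injective (the non-zero coordinates in the $m$-th column are directly readable from the corner-rank matrix of the resulting permutation), distinct toric strata go to distinct Schubert cells. This gives the desired bijection between toric strata and the Schubert strata of $\overline{ByB/B}$ which meet $Z$ non-trivially.

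The step I expect to be the main obstacle is not any single deep computation but rather the bookkeeping in the last paragraph: one must verify that the ``support subset $I$'' of a matrix in $S_x^H$ really does determine, via the explicit row/column reductions in $B$, a well-defined permutation $u_I$, and that this assignment is injective. This is essentially a direct application of the rank-condition description of Schubert cells from \S\ref{sec:equations}, but writing it out cleanly for arbitrary $I$ requires unpacking how transpositions of the form $(0, x(i))$ compose to act on the corner-rank matrix of $x$. Once this combinatorial identification of $u_I$ is made, injectivity is immediate because one can recover $I$ from $u_I$ (it is the set of rows in column $m$ where $u_I$ and $x$ disagree), and the proposition follows.
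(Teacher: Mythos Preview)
Your first paragraph is fine and matches the paper: the weighted-projective-space structure and the finiteness of $T$-orbits follow directly from the previous proposition, the previous lemma, and the generalities in \S\ref{sec:weightedproj}.

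The second paragraph contains a genuine error. You read the ``Moreover'' clause as asserting that the Schubert stratification of $Z$ \emph{equals} the toric stratification, i.e.\ that the assignment $I \mapsto u_I$ is injective. This is false. Take $x = \id$ in $S_3$: the toric strata $\{a \ne 0,\, b \ne 0\}$ and $\{a = 0,\, b \ne 0\}$ in $S_x^H$ both lie in the Schubert cell for $t_{(0,2)}$, as you can check from the corner-rank matrices (and as is confirmed later in Example~\ref{ex:extreme}(1), where $\theta_{\geom}(I) = t_{(0,\max I)}$ is far from injective). So your claimed recovery of $I$ from $u_I$ as ``the set of rows in column $m$ where $u_I$ and $x$ disagree'' cannot be right, and indeed $u_I$ is not a product of the transpositions $(0,x(i))$ but the cycle $\sigma_I$ described in \S\ref{sec:hypercube explicit}.

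The paper's proof of the ``Moreover'' clause is a one-liner in the opposite direction: the Schubert strata are $T$-stable, so their intersections with $Z$ are unions of $T$-orbits; hence the Schubert stratification of $Z$ \emph{coarsens} the toric stratification, and in particular each toric stratum sits inside a unique Schubert stratum. This is all that is claimed (cf.\ the phrasing ``coarsens its toric stratification'' in \S\ref{sec:sketch}) and all that is needed to define the geometric hypercube map $\theta_{\geom}$ just after the proposition. The explicit identification of $u_I$, which you are reaching for, is the content of \S\ref{sec:hypercube explicit} and is not required here.
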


\begin{proof}
  The only statement not clear from the above is the one starting
  ``Moreover, \dots''. However, note that the intersection of $Z$ with
  the image of any Schubert stratum is $T$-stable, and hence a union
  of $T$-orbits. The statement follows because $T$ has finitely many
  orbits on $Z$.
\end{proof}

Let $\Chi$ denote the $T$-weights occurring in $S_{x,y}^H$. (Note that
the weights in $\Chi$ are linearly independent and the $S_{x,y}^H
\cong \CM^{|\Chi|}$.) It follows that we obtain a map
\[
\theta_{\geom} : \text{subsets of $\Chi$} \to [x,y].
\]
uniquely characterised by the fact that for any subset $I$
of  $\Chi$, $S_{x,y} \cap S_{x,y}^{\theta(I)}$ is the toric stratum determined by the
non-vanishing of the coordinates in $I$. This is the \emph{(geometric) hypercube map} and will play an
important role below. We will eventually see that it agrees with the
combinatorial hypercube map in (defined in
\S \ref{sec:hypercube piece}). Until we know their equality, we keep
the $\geom$ subscript to distinguish the two.

\subsection{The hypercube piece:
  sheaves} \label{sec:hypercube_sheaves} In this section, we prove
that $i^*\ic$ is pure, and that its Poincar\'e polynomial is given by
the hypercube piece in our formula, where we use the geometric
hypercube map in place of its combinatorial version. All that then remains is to check
that the geometric $\theta_{\geom}$ (defined above) agrees with the
combinatorial hypercube map $\theta$ (defined in
\S \ref{sec:hypercube piece}). This we do in the subsequent section.

\begin{prop} \label{prop:ipure}
  $i^*\ic$ is pure.
\end{prop}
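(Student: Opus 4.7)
The plan is to apply Corollary \ref{cor:pure} directly. By Proposition \ref{prop:intersections} the space $Z = \PM_\lambda \dot{S}^H_{x,y}$ is a weighted projective space, with toric stratification given by the $\OC_I$ indexed by non-empty subsets $I \subset \Chi$, and each toric stratum sits inside a unique Schubert stratum $\PM_\lambda \dot{S}^u_{x,y}$ for $u = \theta_\geom(I) \in (x,y]$. So the set-up of Corollary \ref{cor:pure} applies to $\FS := i^*\ic$.

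The first two hypotheses of Corollary \ref{cor:pure} are essentially free. For (1), stalks of $i^*\ic$ are just stalks of the ambient IC sheaf $\ic$ on $\PM_\lambda \dot{S}_{x,y}$, which are pure since $\ic$ is a pure complex (coming from a mixed Hodge module and normalised to weight zero). For (2), by Proposition \ref{prop:kl} and Theorem \ref{thm:kl} the restriction of $\ic$ to each Schubert stratum $\PM_\lambda \dot{S}^u_{x,y}$ is a local system whose fibre has cohomology concentrated in even degrees and given by $\KL_{u,y}$; since the local system is $T$-equivariant on a $T$-orbit closure of a connected smooth stratum, it is constant, and its further restriction to the toric substratum $\OC_I$ is then a direct sum of even shifts of the constant sheaf.

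The main obstacle is condition (3): the restriction
\[
H^\bullet_T(U_{\{i\}}, \FS_{U_{\{i\}}}) \longto H^\bullet_T(\OC_J, \FS_{\OC_J})
\]
must be surjective for every $i \in \{1,\dots,p\}$ and every $J \supset \{i\}$. My approach is to compare both sides with equivariant cohomology on the ambient slice. Because $U_{\{i\}}$ retracts $T$-equivariantly onto the closed toric stratum $\OC_{\{i\}}$ by \eqref{eq:Uretract}, the Attractive Proposition gives $H^\bullet_T(U_{\{i\}}, \FS_{U_{\{i\}}}) = H^\bullet_T(\OC_{\{i\}}, \FS_{\OC_{\{i\}}})$; using (2) both sides decompose as $H^\bullet_T(\OC_{\bullet}, \QM) \otimes P_{u_\bullet,y}(q)$ for the corresponding KL polynomials. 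Next, I would lift the question up to the ambient $\PM_\lambda \dot{S}_{x,y}$: since $\ic$ on the whole slice is pure, the long-exact-sequence argument used in the proof of Proposition \ref{prop:qkl} shows that restriction from global sections to any $T$-invariant closed subvariety is surjective onto the weight-zero part. Combining this with the Attractive Weight Argument applied to a cocharacter $\lambda_J$ provided by (4) of \S\ref{sec:weightedproj}, which sends any point of $U_{\{i\}}$ into $\overline{\OC_J}$, the required surjectivity reduces to the surjectivity of the restriction from $H^\bullet_T(\overline{\OC_J}, \ic)$ onto $H^\bullet_T(\OC_J, \FS_{\OC_J})$, which again follows from purity of $\ic$ on $\overline{\OC_J}$ by the same short-exact-sequence argument.

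Once condition (3) is in place, Corollary \ref{cor:pure} yields purity of $\FS = i^*\ic$, which is the statement of Proposition \ref{prop:ipure}. The main difficulty, as indicated, is the surjectivity step (3); the delicate point is to keep track of the two different attracting actions (the one by $\lambda$ that defines $Z$ in the first place, and the auxiliary $\lambda_J$ used for intra-stratum comparisons) and to ensure they are compatible with the $T$-equivariant structure so that the Attractive Proposition and the Attractive Weight Argument can be iterated without loss.
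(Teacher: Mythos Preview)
Your treatment of hypotheses (1) and (2) of Corollary \ref{cor:pure} is correct and matches the paper. The gap is in your argument for hypothesis (3).

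First, the assertion that ``the long-exact-sequence argument used in the proof of Proposition \ref{prop:qkl} shows that restriction from global sections to any $T$-invariant closed subvariety is surjective onto the weight-zero part'' is not what that proof establishes, and the statement is false in general (take $X=\PM^1$ with the constant sheaf and $Y$ two distinct points: $H^0(X)\to H^0(Y)$ is $\QM\to\QM^2$). Second, your final reduction invokes ``purity of $\ic$ on $\overline{\OC_J}$'', but $\ic|_{\overline{\OC_J}}$ is a restriction of $i^*\ic$, whose purity is exactly the conclusion you are trying to reach; so this step is circular. The auxiliary cocharacter $\lambda_J$ does retract $Z$ onto $\overline{\OC_J}$, but that only tells you $H^\bullet_T(Z,\FS)\cong H^\bullet_T(\overline{\OC_J},\FS)$; it does not produce the surjection onto $H^\bullet_T(\OC_J,\FS)$ without already knowing purity on $\overline{\OC_J}$.

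The paper handles (3) by a different mechanism. After the same Attractive-Proposition identification $H^\bullet_{T'}(U_{\{i\}},\FS)\cong H^\bullet_{T'}(\OC_{\{i\}},\FS)$, it notes that both source and target of the restriction map are free over $H^\bullet_{T'}(\pt,\QM)$ (this uses only (2)), so by Nakayama's lemma it suffices to check surjectivity after reducing modulo the maximal ideal. That reduction produces the \emph{non-equivariant} map of stalks
\[
H^\bullet\bigl(\ic(\overline{ByB/B})_u\bigr)\longrightarrow H^\bullet\bigl(\ic(\overline{ByB/B})_v\bigr)
\]
for $u\le v$ in Bruhat order, and this map is surjective by \cite[Theorem 3.6]{BMP}. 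The Braden--MacPherson input is genuinely external: it rests on the global fact that every $T$-fixed point on $G/B$ is attractive, whence $IH^\bullet(\overline{ByB/B})\to H^\bullet(\ic_u)$ surjects for all $u$. This surjectivity of stalk-to-stalk maps is the missing idea in your argument, and it cannot be recovered from purely local purity/weight considerations on $Z$.
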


\begin{proof}
  We will check the conditions of Corollary \ref{cor:pure}. As $i^*\ic$
  is the restriction of a pointwise pure sheaf, its stalks are pure, and so (1)
  is satisfied. Moreover, the restriction of
  $\ic(\overline{ByB/B},\QM)$ to each Bruhat stratum is a direct sum
  of shifts of constant sheaves in even degrees. As the stratification
  of $Z$ is via intersections with Bruhat cells, it follows that (2)
  is satisfied. We now turn to (3) which is the most subtle. Denote by
  $T' = T/\lambda(\CM^*)$ the torus which acts on $\PM_\lambda
  \dot{S}_{x,y}$.   We want
  to check that for each $i$, the restriction map
  \[
H_{T'}^\bullet(U_{\{i\}}, (i^*\ic)_{U_{\{i\}}}) \to H_{T'}^\bullet( Z_J, (i^*\ic)_J)
    \]
is surjective, for all subsets $J$ containing $i$. Now, via the
attractive proposition we can rewrite this as the restriction map
  \[
H_{T'}^\bullet(Z_{\{i\}}, (i^*\ic)_{Z_{\{i\}}}) \to H_{T'}^\bullet( Z_J, (i^*\ic)_J)
    \]
We already know that both are isomorphic to direct sums of (even) shifts of
$H^\bullet_{T'}(Z_{\{i\}}\QM)$ and $H^\bullet_{T'}(Z_J,\QM)$
respectively. In particular, when we reduce modulo the maximal ideal
of $H_{T'}^\bullet(\pt,\QM)$ (i.e. elements of positive degree) we obtain a map
\[
  H^\bullet( \{p\}, (i^*\ic)_p) \to H^\bullet(\{p'\} , (i^*\ic)_{p'})\]
where $p$ (resp. $p'$) denotes the unique (resp. a choice of) point in
the stratum $Z_{\{i\}}$ (resp. $Z_J$). 
By Nakayama's lemma we are done if we know that this map is
surjective. However, by definition of the geometric hypercube map,
this map may be identified with the map
\[
H^\bullet( \{u\}, \ic(\overline{ByB/B})_u)  \to H^\bullet( \{v\}, \ic(\overline{ByB/B})_v) 
\]
considered in \cite[\S 3.5]{BMP}. This map is surjective by
\cite[Theorem 3.6]{BMP}.
\end{proof}

\begin{remark}
We briefly recall why \cite[Theorem
  3.6]{BMP} holds. The key point is that in the case of flag varieties
  the natural restriction map
  \begin{equation} \label{eq:BMsurj}
IH^\bullet( \overline{ByB/B},\QM) \to H^\bullet(\{u\},\ic(\overline{ByB/B},\QM)_u)
\end{equation}
is surjective, for any point $u \in G/B$. Indeed, by equivariance it
is enough to check this for $T$-fixed points, and then this follows
because all fixed points on $G/B$ are attractive. (Once one has the
surjectivity in \eqref{eq:BMsurj} it is easy to deduce that one has a
surjection $H^\bullet(\{u\},\ic(\overline{ByB/B},\QM)_{u}) \to
H^\bullet(\{v\},\ic(\overline{ByB/B},\QM)_v)$ whenever $v$'s stratum
contains $u$ in its closure.) Braden and
MacPherson use \cite[Theorem
  3.6]{BMP} to deduce the unimodality of Kazhdan-Lusztig polynomials.
\end{remark}

By definition of the hypercube map, the Poincar\'e polynomial of
$i^*\ic$ on the toric stratum $\OC_I \subset Z$ agrees with the
Kazhdan-Lusztig polynomial $P_{\theta_{\geom}(I),y}$. Hence we can use
Propositions \ref{prop:ipure} and \ref{prop:PP} to deduce the
following:

\begin{prop} \label{prop:PPIC}
  The Poincar\'e polynomial of $H^\bullet(Z, i^*\ic)$ is
  given by
  \[
\widetilde{Q}_{\geom} := \sum_{\emptyset \ne I \subset \{1, \dots, p \}} (q-1)^{|I|-1} P_{\theta_{\geom}(I),z}.
\]
\end{prop}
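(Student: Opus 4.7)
The plan is to obtain this as a direct consequence of Proposition \ref{prop:PP} applied to the sheaf $\FS = i^*\ic$ on the weighted projective space $Z = \PM_\lambda \dot{S}_{x,y}^H$, which is a weighted projective space (with $p = |\Chi|$) by Proposition \ref{prop:intersections}. The three hypotheses of Proposition \ref{prop:PP} are: (a) $\FS$ is pure, (b) its restriction to each toric stratum is a direct sum of constant sheaves in even degrees, and (c) the stalks have the expected Poincar\'e polynomials. Hypothesis (a) is exactly Proposition \ref{prop:ipure}, which is the substantive input of the whole section; the remaining steps are essentially bookkeeping that matches the geometric picture with the combinatorial formula.

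For hypothesis (b), I would use Proposition \ref{prop:intersections}: every toric stratum $\OC_I \subset Z$ is cut out by intersecting $Z$ with a unique Schubert stratum $\PM_\lambda \dot{S}^{u}_{x,y}$, where by construction $u = \theta_{\geom}(I)$. The sheaf $\ic$ on $\PM_\lambda \dot{S}_{x,y}$ is pulled back from $\ic(\overline{ByB/B},\QM)$ under the slice identification, and Theorem \ref{thm:kl} guarantees that its restriction to any Schubert cell is a direct sum of even shifts of the constant sheaf. Further restricting to the toric substratum $\OC_I$ preserves this property.

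For hypothesis (c), I would identify the stalk $(i^*\ic)_{x_I}$ at a point $x_I \in \OC_I$ with the stalk of $\ic(\overline{ByB/B},\QM)$ at the corresponding point of $B\theta_{\geom}(I)B/B$, using normal nonsingularity of the slice. By Proposition \ref{prop:kl}, this stalk has Poincar\'e polynomial $P_{\theta_{\geom}(I),y}$, which is precisely the term appearing on the right-hand side of the claimed formula.

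With these three verifications in place, Proposition \ref{prop:PP} gives
\[
\sum_{\emptyset \ne I \subset \{1, \dots, p\}} (q-1)^{|I|-1} P(\FS_{x_I}) = \sum_{\emptyset \ne I \subset \{1, \dots, p\}} (q-1)^{|I|-1} P_{\theta_{\geom}(I),y} = \widetilde{Q}_{\geom},
\]
as required. I do not expect any real obstacle here: the work is concentrated in Proposition \ref{prop:ipure} (for which the nontrivial input is the Braden--MacPherson surjectivity result used to verify condition (3) of Corollary \ref{cor:pure}) and in the geometric study of $S_{x,y}^H$; once purity is in hand, this proposition is a formal consequence via Proposition \ref{prop:PP}.
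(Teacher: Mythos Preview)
Your proposal is correct and follows essentially the same approach as the paper: invoke Proposition~\ref{prop:ipure} for purity, identify the stalk Poincar\'e polynomials via the definition of $\theta_{\geom}$ and Proposition~\ref{prop:intersections}, and then apply Proposition~\ref{prop:PP}. The paper's proof is just a one-sentence version of exactly this; your explicit verification of the ``even shifts of constant sheaves'' hypothesis is a welcome elaboration, and you correctly write $P_{\theta_{\geom}(I),y}$ where the statement has the typo $z$.
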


\begin{cor} \label{cor:PPIC}
  The Poincar\'e polynomial of $H^\bullet(Z, i^!\ic)$ is given by
  \[
Q_{\geom} = q^{\ell(y)-\ell(x) - 1}\widetilde{Q}_{\geom})(q^{-1}).
    \]
  \end{cor}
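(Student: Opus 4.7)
The plan is to deduce this corollary directly from Proposition \ref{prop:PPIC} by Verdier/Poincar\'e duality, exploiting the fact that the intersection cohomology sheaf on the ambient variety $\PM_\lambda \dot{S}_{x,y}$ is self-dual up to an explicit shift.

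Write $D = \ell(y)-\ell(x)-1$ for the complex dimension of $X := \PM_\lambda\dot{S}_{x,y}$, so that $X$ is projective and $Z$ is compact (Proposition \ref{prop:intersections}). In the paper's normalization (where $\ic$ restricts to the constant sheaf without shift on the smooth locus) Verdier duality on $X$ gives $\DM_X \ic \cong \ic[2D]$. Applying the standard identity $i^! = \DM_Z \circ i^* \circ \DM_X$ together with the shift rule $\DM(-[n]) = (\DM{-})[-n]$, one obtains
\[
  i^!\ic \;\cong\; \DM_Z(i^*\ic)[-2D].
\]
Poincar\'e--Verdier duality on the compact variety $Z$ then yields $H^i(Z, \DM_Z(i^*\ic)) \cong H^{-i}(Z, i^*\ic)^\ast$, and combining with the shift $[-2D]$ gives the key identification
\[
  H^i(Z, i^!\ic) \;\cong\; H^{\,2D-i}(Z, i^*\ic)^\ast.
\]

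Next, the purity of $i^*\ic$ (Proposition \ref{prop:ipure}), together with the parity vanishing of $\ic(\overline{ByB/B},\QM)$ (Theorem \ref{thm:kl}), ensures that $H^\bullet(Z, i^*\ic)$ is concentrated in even degrees, so that $\widetilde{Q}_{\geom}$ is a genuine polynomial in $q$. Substituting $j = 2D - i$ in the definition of the Poincar\'e polynomial transforms the identification above into
\[
  P\bigl(H^\bullet(Z, i^!\ic)\bigr)(q) \;=\; q^{D}\, P\bigl(H^\bullet(Z, i^*\ic)\bigr)(q^{-1}) \;=\; q^{\,\ell(y)-\ell(x)-1}\,\widetilde{Q}_{\geom}(q^{-1}) \;=\; Q_{\geom},
\]
as required.

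The only real difficulty is the bookkeeping of the shifts produced by the non-perverse normalization of $\ic$ in use here: the shift $[2D]$ in $\DM_X\ic \cong \ic[2D]$ has to line up exactly with the Poincar\'e duality shift on the compact quotient $Z$. Once this is kept straight, the corollary is essentially immediate from Proposition \ref{prop:PPIC}, and no new geometric input is required.
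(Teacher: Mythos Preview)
Your proof is correct and follows essentially the same route as the paper: both use that Verdier duality interchanges $i^!$ and $i^*$ together with the self-duality $\DM_X\ic \cong \ic[2D]$ (with $D=\ell(y)-\ell(x)-1$) to convert Proposition~\ref{prop:PPIC} into the desired formula. The paper phrases the key identity as $\DM_Z i^* = i^! \DM_X$ rather than $i^! = \DM_Z i^* \DM_X$, but the argument is the same.
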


  \begin{proof}
    The Poincar\'e polynomial of the dual of $H^\bullet(Z, i^*\ic)$ is given by
    $\widetilde{Q}_{\geom} (q^{-1})$. Because Verdier duality
    interchanges $i^!$ and $i^*$ we deduce that $\widetilde{Q}_{\geom}
    (q^{-1})$ agrees with the Poincar\'e polynomial of
    \[
H^\bullet(Z, i^! \DM \ic) = H^\bullet( Z, i^!
\ic[2(\ell(y)-\ell(x)-1)]).
\]
We ue that $\DM \ic(X,\QM) = \ic(X, \QM)[2d]$, where $d = \dim_{\CM}
X$, and the fact that $\PM_\lambda \dot{S}_{x,y}$ has dimension
$\ell(y)-\ell(x)-1$. Hence the Poincar\'e polynomial of $H^\bullet( Z, i^!
\ic)$ is $q^{\ell(y)-\ell(x) - 1}\widetilde{Q}_{\geom}(q^{-1})$ as claimed.
\end{proof}

\begin{remark}
  The formula for the Poincar\'e polynomial of $H^\bullet(Z,
  i^!\ic)$ agrees with the hypercube piece, with the only difference
  being that the former involves the geometric hypercube map, whereas
  the later involves the combinatorial hypercube map. Thus to prove
  our formula it is enough to show that the two hypercube maps agree.
\end{remark}

\section{Combinatorics of the hypercube map}
\label{sec:hypercube combinatorics} 

In the previous section we almost finished the proof of our
formula. The only missing piece is the following theorem, that
compares the ``geometric'' and ``combinatorial'' hypercube maps. Here
the arguments are of a different flavour, hence the new section. The goal of this section is the following:

\begin{thm} \label{thm:geom hypercube}
  The set
\[ L = \{ z \in [x,y] \; | \; z^{-1}(0) = x^{-1}(0) \} \]
provides a hypercube decomposition of the Bruhat interval
$[x,y]$. Moreover, we have $\theta = \theta_\geom$, thus the geometric
and combinatorial hypercube maps agree.
\end{thm}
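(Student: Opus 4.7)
The plan is to establish both assertions by combining a direct combinatorial analysis of $L$ with an extension of the geometric framework of \S\ref{sec:hypercube_geometry}. First I would show that $L = [x,c]$ for some $c \le y$. Since $L = [x,y] \cap S_n x$, where $S_n \subset S_{n+1}$ denotes the parabolic subgroup stabilizing $0$, this follows from the standard fact that the intersection of a Bruhat interval with a parabolic coset is itself a Bruhat interval (Deodhar's lemma on minimal coset representatives). Diamond completeness of $L$ then reduces to the observation that an edge $u \to v$ lies in $L$ if and only if the transposition exchanging $u$ and $v$ fixes $0$; a short case check on the classification of diamonds in the Bruhat graph shows that if two incident edges of a diamond have this property, so do the remaining two.

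Next, to establish the hypercube cluster condition at every $v \in L$, I would extend the geometric setup of \S\ref{sec:hypercube_geometry} from $x$ to arbitrary $v \in L$. Since $v^{-1}(0) = m = x^{-1}(0)$, the slice $S_v$ and its distinguished subvariety $S_v^H$ are defined verbatim, and the $T$-weights on $S_{v,y}^H$ remain distinct and of the form $\{\e_i - \e_0\}_{i \ne 0}$. Proposition~\ref{prop:intersections} applied at $v$ then identifies the toric stratification of $\PM_\lambda \dot{S}_{v,y}^H$ with non-empty subsets of a weight set $\Chi_v$, yielding a geometric hypercube map $\theta_{\geom,v}\colon 2^{\Chi_v} \to [v,y]$. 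The singleton strata correspond exactly to the edges in $E_v = \{u \to v \mid u \notin L\}$, giving a natural bijection $E_v \leftrightarrow \Chi_v$. For any $F \subset E_v$ of incident arrows with pairwise incomparable sources, the subsets of $F$ index a Boolean lattice of toric strata in $S_{v,y}^H$, each landing in a distinct Schubert cell of $[v,y]$; this produces the required hypercube embedding $H_F \hookrightarrow [v,y]$, with uniqueness following from the uniqueness of the Schubert cell containing a given torus orbit.

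For the identification $\theta = \theta_{\geom}$ at $v = x$, both maps agree on singletons by construction. For a general subset $F \subset \Chi$, the geometric map depends only on $F_{\max}$: non-maximal weights correspond to directions whose closures already lie in the closure of $V_{F_{\max}}$, so $\theta_{\geom}(F) = \theta_{\geom}(F_{\max})$. On the combinatorial side, $\theta(F)$ is by definition the crown of the hypercube spanned by $F_{\max}$, and by the previous step this crown is precisely the element whose Schubert cell contains the open stratum $V_{F_{\max}}$, giving the desired equality.

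The main obstacle will be the second step: verifying that the toric stratification of $S_{v,y}^H$ faithfully encodes the combinatorial hypercube structure. One must show that toric strata differing by a single weight correspond to Bruhat covers, and that distinct toric strata land in distinct Schubert cells whenever the associated weights have incomparable sources. These compatibilities bridge the Boolean-lattice geometry of an affine space with distinct torus weights and the Bruhat-order combinatorics of Schubert cells, and they are what ultimately align the geometric and combinatorial definitions of the hypercube map.
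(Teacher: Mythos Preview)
Your outline for Step~1 (that $L$ is a downward interval and diamond complete) is fine and matches what the paper takes as essentially evident. The real work, and the real gap in your proposal, is in Steps~2 and~3.

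The claim $\theta_{\geom}(F) = \theta_{\geom}(F_{\max})$ is the heart of the matter, and your justification does not establish it. You write that ``non-maximal weights correspond to directions whose closures already lie in the closure of $V_{F_{\max}}$''. First, the closure relation goes the other way: since $F_{\max} \subset F$, it is $V_{F_{\max}}$ that lies in $\overline{V_F}$, not conversely. Second, and more importantly, closure relations among toric strata in the affine space $S_{x,y}^H$ say nothing a priori about which Schubert cell a stratum lands in. The Schubert cell of a point is determined by its corner rank matrix, and there is no abstract reason why turning on an extra coordinate (passing from $F_{\max}$ to $F$) should leave all corner ranks unchanged. Similarly, your Step~2 asserts that the Boolean lattice of toric strata indexed by subsets of $F$ embeds as a hypercube in the Bruhat graph, but Proposition~\ref{prop:intersections} only gives a well-defined map from strata to Schubert cells; it gives neither injectivity, nor that adjacent strata differ by a single transposition, nor the \emph{uniqueness} of the resulting embedding required by the definition of ``spans a hypercube''.

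The paper supplies exactly the missing ingredient: an explicit combinatorial formula for $\theta_{\geom}$. One extracts from $I$ a greedy decreasing subsequence $I_{\decr}$ and sets $\theta_{\geom}(I) = \sigma_I x$ for a certain cycle $\sigma_I$ depending only on $I_{\decr}$; this is verified by a direct corner-rank computation (the theorem $\theta_{\geom} = \theta'_{\geom}$ in \S\ref{sec:hypercube explicit}). From this formula one reads off, via the Subword Condition, both Proposition~\ref{prop:swapsies} (singletons $\theta_{\geom}(\{i\}),\theta_{\geom}(\{j\})$ are incomparable in Bruhat order exactly when $i,j$ form a descent in $x$) and Proposition~\ref{prop:map hypercube} (for $I = I_{\decr}$, the interval $[x,\theta_{\geom}(I)]$ is literally an $I$-hypercube, giving existence and uniqueness of the embedding). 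These two propositions then identify the passage $F \mapsto F_{\max}$ with $I \mapsto I_{\decr}$, which is precisely your unproved claim $\theta_{\geom}(F)=\theta_{\geom}(F_{\max})$. Without something playing the role of this explicit formula, your obstacle paragraph is an accurate diagnosis of why the argument does not close.
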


\subsection{Explicit description of the hypercube
  map} \label{sec:hypercube explicit}
Recall that $S_x^H$ denotes subvariety of $S_x$ where all coordinates are zero, except
for those below the 1 in the $0^{th}$ column. For example, if $x =210345$ then
\begin{equation*}
  S_x^H = \left ( \begin{matrix}
    0 & 0 & 1 & 0 & 0 & 0\\ 0 & 1 & 0 & 0 & 0 & 0 \\ 1 & 0 & 0 & 0 & 0 & 0\\ 0 &
    0 & * & 1 & 0 & 0\\ 0 & 0 & * & 0 & 1 & 0\\ 0 & 0 & * & 0 & 0 & 1\end{matrix} \right )
\end{equation*}
We can compute the geometric hypercube map explicitly by taking any
subset of the $*$-variables, setting the remaining $*$-variables to
zero, and then describing in which Schubert cell we lie. (It is a
consequence of Proposition \ref{prop:intersections} that the resulting
Schubert cell only depends on which $*$-variables are non-zero.) In
any specific example this is easily decided by performing row and
column operations.

\begin{ex} \label{ex:hypercube calcs}
  Here we give two examples when $n = 3$:
  \begin{enumerate}
  \item Suppose that $x = (0,1,2,3)$ is the identity. Consider a
    general element of $S_x^H$:
\[ \left ( \begin{matrix}
      1 & 0 & 0 & 0 \\ a & 1 & 0 & 0 \\ b & 0 & 1 & 0 \\ c & 0 & 0 &
      1 \end{matrix} \right )\]
    If $c \ne 0$, then by rescaling the
    last row and performing
    row operations upwards followed by column operations rightwards we see that
    \[
\left ( \begin{matrix}
      1 & 0 & 0 & 0 \\ a & 1 & 0 & 0 \\ b & 0 & 1 & 0 \\ c & 0 & 0 &
      1 \end{matrix} \right )  \sim
  \left ( \begin{matrix}
      0 & 0 & 0 & -c^{-1} \\ 0 & 1 & 0 & -ac^{-1} \\ 0 & 0 & 1 & -bc^{-1} \\ 1 & 0 & 0 &
      c^{-1} \end{matrix} \right ) \sim
    \left ( \begin{matrix}
      0 & 0 & 0 & 1 \\ 0 & 1 & 0 & 0 \\ 0 & 0 & 1 & 0 \\ 1 & 0 & 0 &
      0 \end{matrix} \right ) 
      \]
all lie in the same Bruhat cell. In particular $\theta(I) = t_{0,3}$
if $3 \in I$ (i.e. if $c \ne 0$). Similar arguments show that when $x
= 0$ the hypercube map only
depends on the last non-zero element in the first column. In other
words
\[
 \theta(I) = t_{(0,\max I)} \quad \text{for all $I$.} \]
\item Consider the case $x = (0,3,2,1)$. Again, by upwards row and
  rightwards column operations we see:
\[  \left ( \begin{matrix}
      1 & 0 & 0 & 0 \\ 0 & 0 & 0 & 1 \\ 0 & 0 & 1 & 0 \\ c & 1 & 0 &
      0 \end{matrix} \right )  \sim
  \left ( \begin{matrix}
      0 & -c^{-1} & 0 & 0 \\ 0 & 0 & 0 & 1 \\ 0 & 0 & 1 & 0 \\ c & 1 & 0 &
      0 \end{matrix} \right )  \sim
  \left ( \begin{matrix}
      0 & 1 & 0 & 0 \\ 0 & 0 & 0 & 1 \\ 0 & 0 & 1 & 0 \\ 1 & 0 & 0 &
      0 \end{matrix} \right )
\]
Thus $\theta(\{3\}) = (3,0,2,1) = t_{(0,3)}x$. On the other and if
$a, b$ and $c$ are all non-zero then
\[
  \left ( \begin{matrix}
      1 & 0 & 0 & 0 \\ a & 0 & 0 & 1 \\ b & 0 & 1 & 0 \\ c & 1 & 0 &
      0 \end{matrix} \right )  \sim
    \left ( \begin{matrix}
      0 & 0 & 0 & 1 \\ 0 & 0 & 1 & 0 \\ 0 & 1 & 0 & 0 \\ 1 & 0 & 0 &
      0 \end{matrix} \right )  
\]
One can check this by hand. A more efficient way is to notice that the
corner rank matrices of both matrices are given by
\[
  \left ( \begin{matrix}
      1 & 2 & 3 & 4 \\ 1 & 2 & 3 & 3 \\ 1 & 2 & 2 & 2 \\ 1 & 1 & 1 &
      1 \end{matrix} \right )  \]
and hence both belong to the same Schubert cell by the results of
\S\ref{sec:equations}. Thus $\theta(\{1,2,3\}) = w_0$.
\end{enumerate}

\end{ex}

We now give an explicit general description of the hypercube map. We will make
the following simplifications:
\begin{enumerate}
\item The subvariety $S_{x,y}^H$ can be viewed as a coordinate
  subspace inside $S_{x}^H$. In particular, the geometric
  hypercube map for $S_{x,y}^H$ is the restriction of that for
  $S_x^H$. Thus it is enough to describe the hypercube map for
  $S_x^H$.
\item Recall that $m = x^{-1}(0)$. The free variables in $S_x^H$ can
  be identified with the set $x(m+1), x(m+1), \dots, x(n)$. (In this
  indexing, the variable $i$ corresponds to the unique free variable
  in $S_x^H$ which occurs in the $i^{th}$-row. These are exactly the
  free variables which have a $1$ to their right in $S_x^H$.) 
\end{enumerate}

Thus,
  from now on we regard the hypercube map as a map
  \[
\theta_{\geom} : \left \{ \begin{array}{c}\text{subsequences of }\\\text{$x(m+1), x(m+2), \dots,
  x(n)$}\end{array} \right \} \to [x, w_0]
\]
which maps the empty subsequence to $x$. Note as the the elements of
the set $x(m+1), x(m+2), \dots,  x(n)$ are distinct, subsequences and
subsets are the same thing. However, in the following the order in
which entries occur in the subsequence will play an important role in
determining the map.

Consider a non-empty subsequence $I$ of $x(m+1), x(m+2), \dots,
x(n)$. Define a sequence $i_1', i_2', \dots$ inductively as follows:
\begin{align*}
  i_1 &= \text{maximum of the entries in $I$,} \\
  i_{k+1} &= \text{maximum of the entries right of $i_k'$ in $I$.}
\end{align*}
For some $l$, the set of entries right of $i_l$ is
empty, in which case $i_{l+1}$ is undefined. We let
\[
I_{\decr} = (i_1, i_2, \dots, i_l)
\]
denote the resulting subsequence.

\begin{remark}
  One may consider $I \mapsto I_{\decr}$ as the extraction of a ``greedy
  decreasing subsequence'': first choose the maximal element of $I$,
  then choose a maximal element right of it, etc.
\end{remark}

Now consider the $k+1$-cycle $\sigma_I$
which sends:
\[
i_l \mapsto i_{l-1} \mapsto \dots i_2 \mapsto i_1 \mapsto 0 \mapsto i_l
\]
We define
\[
\theta_{\geom}'(I) := \sigma_I x.
\]
If $I$ is empty, we set $\theta_{\geom}'(I) := x.$ We obtain in this way a map
  \[
\theta_{\geom}' : \left \{ \begin{array}{c}\text{subsequences of }\\\text{$x(m+1), x(m+2), \dots,
  x(n)$}\end{array} \right \} \to S_{n+1}.
\]
The following shows that this explicit description does indeed describe the geometric
hypercube map.

\begin{thm}
  $\theta_\geom = \theta_\geom'$.
\end{thm}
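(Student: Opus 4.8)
The plan is to prove $\theta_{\geom} = \theta_{\geom}'$ by direct computation in $S_x^H$, reducing everything to a normal form obtained by Gaussian elimination, exactly as in the worked cases of Example \ref{ex:hypercube calcs}. Recall that a point of $S_x^H$ lying in the toric stratum indexed by a subset $I \subseteq \{x(m+1),\dots,x(n)\}$ is the matrix which agrees with the permutation matrix $x$ outside the $m^{\mathrm{th}}$ column, and whose $m^{\mathrm{th}}$ column has a $1$ in row $0$ (coming from $x$ itself, as $x(m)=0$ means the $1$ of $x$ in column $m$ is in row $0$), together with nonzero free entries $a_i$ in rows $i \in I$ and zeros in rows $i \notin I$. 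By Proposition \ref{prop:intersections} the Schubert cell containing this matrix depends only on $I$, so I am free to make a convenient choice of the scalars $a_i$; the computation is cleanest if I take all nonzero entries equal to $1$. Since Schubert cells are determined by the corner rank matrix (Section \ref{sec:equations}), it suffices to compute the corner ranks $\rank(g^{\le(p,q)})$ of this matrix $g = g_I$ and show they coincide with the corner ranks of the permutation matrix $\sigma_I x$.

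First I would set up notation: write $m = x^{-1}(0)$, let $I = (i_1, i_2, \dots)$ be listed in the order the corresponding rows appear (i.e.\ increasing row index), and let $I_{\decr} = (j_1 > \text{(position)} > j_2 > \dots)$ be the greedy decreasing subsequence $i_1',i_2',\dots$ extracted as in the statement — I will call its entries $j_1, \dots, j_l$, so $j_1$ is the largest entry of $I$, $j_2$ is the largest entry occurring in a later row than $j_1$, etc. The key structural observation is that $\sigma_I x$ differs from $x$ only in the $m^{\mathrm{th}}$ column and in the columns $x^{-1}(j_1), \dots, x^{-1}(j_l)$, since $\sigma_I$ is the cycle $(0\; j_1\; j_2\; \cdots\; j_l)$ acting on values: $\sigma_I x$ sends $m \mapsto j_l$ (the former $0$ slot now holds $j_l$), sends $x^{-1}(j_k) \mapsto j_{k-1}$ for $k \geq 2$, and sends $x^{-1}(j_1) \mapsto 0$. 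All other columns are untouched. The plan is then to verify, column by column and using induction on $l$ (peeling off $j_1$), that performing the row operations on $g_I$ — clear the entries $1$ below the $1$ in row $0$ of column $m$ using rows whose pivots lie to the right, but do this greedily, which is precisely what forces the greedy-decreasing selection — transforms $g_I$ into the permutation matrix $\sigma_I x$.

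Concretely: among the free rows $i \in I$, the one with the smallest pivot column that still lies to the right of column $m$, equal to $x^{-1}(j_1)$ actually being chosen because $j_1 = \max I$ has its $1$ in the rightmost such column... here I need to be careful about which pivot gets used. The cleanest formulation: row $i$ (for $i\in I$) has its $1$ in column $x^{-1}(i)$, and the entries $i \in I$ with $x^{-1}(i) > m$ are exactly the free rows of $S_x^H$ — wait, all free rows have their $1$ to the right of column $m$ by construction of $S_x^H$. So I sort the free rows by pivot column; the leftmost pivot column among rows in $I$ is where the first elimination "lands" $0$. Tracing this through shows the resulting cycle structure on values is exactly the greedy decreasing one. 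I would make this rigorous by an explicit induction: let $j_1 = \max I$ with pivot column $c_1 = x^{-1}(j_1)$; after using row $i_0$ (the row containing the $1$ of column $m$, namely row $0$) one can normalize so that column $c_1$ acquires the $1$ currently in column $m$'s row-$0$ position and column $m$'s free-row-structure is inherited by the smaller instance on the rows strictly below, with $I$ replaced by its entries lying in later rows than $j_1$ — which is exactly how $i_2'$ is defined. This recursion terminates and produces $\sigma_I x$.

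The main obstacle I anticipate is bookkeeping: keeping straight the distinction between row indices and the values $x(i)$, and checking that the greedy choice in the row reduction really does pick out the greedy decreasing subsequence rather than some other one, especially verifying that \emph{only} the claimed columns of $x$ get modified and that corner ranks to the \emph{left} of column $m$ are unaffected (so that one does not leave the Schubert cell of a different permutation). To control this I would prove a clean lemma: for $g_I$ as above, $\rank(g_I^{\le(p,q)}) = \rank((\sigma_I x)^{\le(p,q)})$ for all $(p,q)$, by a case analysis on whether $q < m$, $q = m$, or $q > m$ — the case $q < m$ being immediate since $g_I$ and $\sigma_I x$ agree there, and the cases $q \geq m$ reducing to the combinatorics of how many of the columns $\{m, c_1, \dots, c_l\}$ contribute independent rows within the first $\{p, p+1, \dots, n\}$ rows, which is a direct count. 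Once the corner rank matrices match, Section \ref{sec:equations} gives $g_I \in B(\sigma_I x)B/B$, i.e.\ $\theta_{\geom}(I) = \sigma_I x = \theta_{\geom}'(I)$, completing the proof.
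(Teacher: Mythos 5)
Your final reduction---proving $\rank(g_I^{\le(p,q)}) = \rank((\sigma_I x)^{\le(p,q)})$ for all $(p,q)$ and invoking the corner-rank description of Schubert cells from \S\ref{sec:equations}---is exactly the paper's proof, and the Gaussian-elimination detour in the middle (which you yourself flag as confusing) is unnecessary once you have that lemma. The one substantive point your ``direct count'' glosses over is precisely the step the paper isolates: for $q \ge m$ the modified $m$-th column of $g_I$ raises the corner rank if and only if some element of $I \cap \{\ge p\}$ lies outside $\{x(0),\dots,x(q)\}$, and this condition for $I$ is equivalent to the same condition for $I_{\decr}$ because any such element of $I$ is, by the greedy construction, dominated by a larger element of $I_{\decr}$ occurring further to the right, which then also violates the containment.
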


\begin{proof}
In the discussion below we make use of the description of
Schubert cells in terms of the corner rank matrix recalled in \S
\ref{sec:equations}. In particular, we will use the notation $g^{\le
  (p,q)}$ introduced there.\footnote{We are grateful to A.~Henderson
  for suggesting the proof below. All inaccuracies are ours!}

Consider the permutation matrix $x$. Its corner rank matrix is given by
\[
\text{rank }x^{\le (p,q)} = \# \{ \text{elements in $x(0), x(1), \dots,
  x(q)$ which are $\ge p$} \}.
  \]
On the other hand, if $g$ denotes an element of the stratum $(S_x^H)_I$ (so
that $I$ is a subset of $\{ x(m+1), \dots, x(n) \}$), its corner rank
matrix is computed as follows:
\begin{equation} \label{eq:g cornerrank}
  \text{rank }g^{\le (p,q)} = \begin{cases}\text{rank } x^{\le (p,q)}
    & \text{if $q < m$,} \\
    \text{rank } x^{\le (p,q)} &
  \text{if $I \cap \{ \ge p \}  \subset \{ x(0), x(1), \dots,
    x(q) \}$,} \\
  \text{rank } x^{\le (p,q)}  + 1 & \text{otherwise}.
\end{cases}
\end{equation}
(Indeed, $x$ and $g$ differ only at the $m^{th}$-column. This column
is not contained in $g^{\le (p,q)}$ if $q < m$. Otherwise, the
condition $I \cap \{ \ge p \}  \subset \dots $ expresses whether this new column is in the span of the
    others in $g^{\le (p,q)}$.)

    Now consider the permutations $x$ and $\sigma_Ix$ in string
    notation (i.e. as the sequences $(x(0), x(1), \dots, x(n))$ and
    $(\sigma_Ix(0), \sigma_Ix(1), \dots, \sigma_Ix(n))$). In the
    passage from $x$ to $\sigma_Ix$ the elements $I_{\decr}$ move left, and
    $0$ moves all the way to the right, to the position occupied by the final
    element of $I_{\decr}$. In particular, if we consider the elements which
    are larger than $p$, then those belonging to $I_{\decr}$ move to the
    left. It follows that:
\begin{align} \label{eq:sigmax cornerrank}
  \text{rank }(\sigma_Ix)^{\le (p,q)} &=
   \begin{cases} \text{rank } x^{\le (p,q)}
    & \text{if $q < m$,} \\
    \text{rank } x^{\le (p,q)} &
  \text{if $I_{\decr} \cap \{ \ge p \}  \subset \{ x(0), x(1), \dots,
    x(q) \}$,} \\
  \text{rank } x^{\le (p,q)}  + 1 & \text{otherwise}.
\end{cases}
\end{align}
We claim that if $q \ge m$, then
\begin{equation} \label{eq:Iiff}
  I \cap \{ \ge p \}  \subset \{ x(0), x(1), \dots,
    x(q) \} \Leftrightarrow
  I_{\decr} \cap \{ \ge p \}  \subset \{ x(0), x(1), \dots,
  x(q) \}. 
  \end{equation}
  The implication $\Rightarrow$ is obvious, as $I_{\decr}$ is a subset of
  $I$. For the other direction, assume that  $I \cap \{ \ge p \}  \not
  \subset \{ x(0), x(1), \dots, x(q) \}$ and let $j$ belong to $I \cap
  \{ \ge p \}$ but not to $\{ x(0), x(1), \dots, x(q) \}$. Then, by
  definition of $I_{\decr}$, there exists some element $j'$ in $I_{\decr}$ which
  occurs to the right of $j$ in $x$, and is larger than $j$. Thus $j'$
  belongs to $I_{\decr} \cap \{ \ge p \}$ but not to $\{ x(0), x(1), \dots,
  x(q) \}$. Thus \eqref{eq:Iiff} holds.
  
Thus the corner rank matrices described by \eqref{eq:g cornerrank} and
\eqref{eq:sigmax cornerrank} coincide. We deduce that $g$ and
$\sigma_Ix$ belong to the same Schubert cell. Thus
\[
\theta_{\geom}(I) = \theta'_\geom(I)
\]
which proves the theorem.
\end{proof}

\begin{ex} \label{ex:extreme}
  The extreme examples of the hypercube map are the following:
  \begin{enumerate}
  \item If $x = (0,1, \dots ,n)$ is the identity then $m = 0$ and for any
    sequence of $(1, \dots, n)$, $I = (\max I)$ because $x$
  contains no descending sequences whatsoever. Hence
  \[
\theta_\geom(I) = t_{0, \max I}.
\]
In this case the image of $\theta_\geom$ is isomorphic to the totally
ordered set
\[
  t_{(0,1)} \le t_{(0,2)} \le \dots \le t_{(0,n)}.
\]
The image of $\theta_\geom$ is illustrated (for $n = 3$) in Figure
\ref{fig:0123}. This example matches the explicit calculation in Example
\ref{ex:hypercube calcs}(1).
  \item If $x = (0,n,\dots, 2, 1)$ then $m = 0$ and for any
    subsequence $I$ of $(n,\dots, 2, 1)$, we have $I_{\decr}= I$ in descending
  order. Hence each $\sigma_I$ is distinct and
  \[
\theta : \text{subsequences  of $(n, \dots, 1,0)$} \to S_n
\]
is injective. It identifies the lattice of subsets $\{ 1, \dots, n
\}$ with the lattice between $x$ and $w_0$ in $S_n$. The image of
$\theta$ is illustrated (for $n = 3$) in Figure \ref{fig:0321}. (This
example generalizes the explicit calculations in Example 
\ref{ex:hypercube calcs}(2).) 

That such a nice
lattice is present in Bruhat order might come as a surprise to the reader. It is less surprising when
one realizes that $c = w_0x = (n, 0, 1, \dots, n-1)$ is a Coxeter
element, and hence $w_0$ gives an order reversing bijection between
the interval $[x,w_0]$ and the interval $[\id,c]$. The interval
$[\id,c]$ is easily seen to be isomorphic to a $\{ 0, \dots, n\}$-hypercube.
  \end{enumerate}
\end{ex}

\begin{figure} \label{fig:0123}
  \centering
  \includegraphics[scale=0.4]{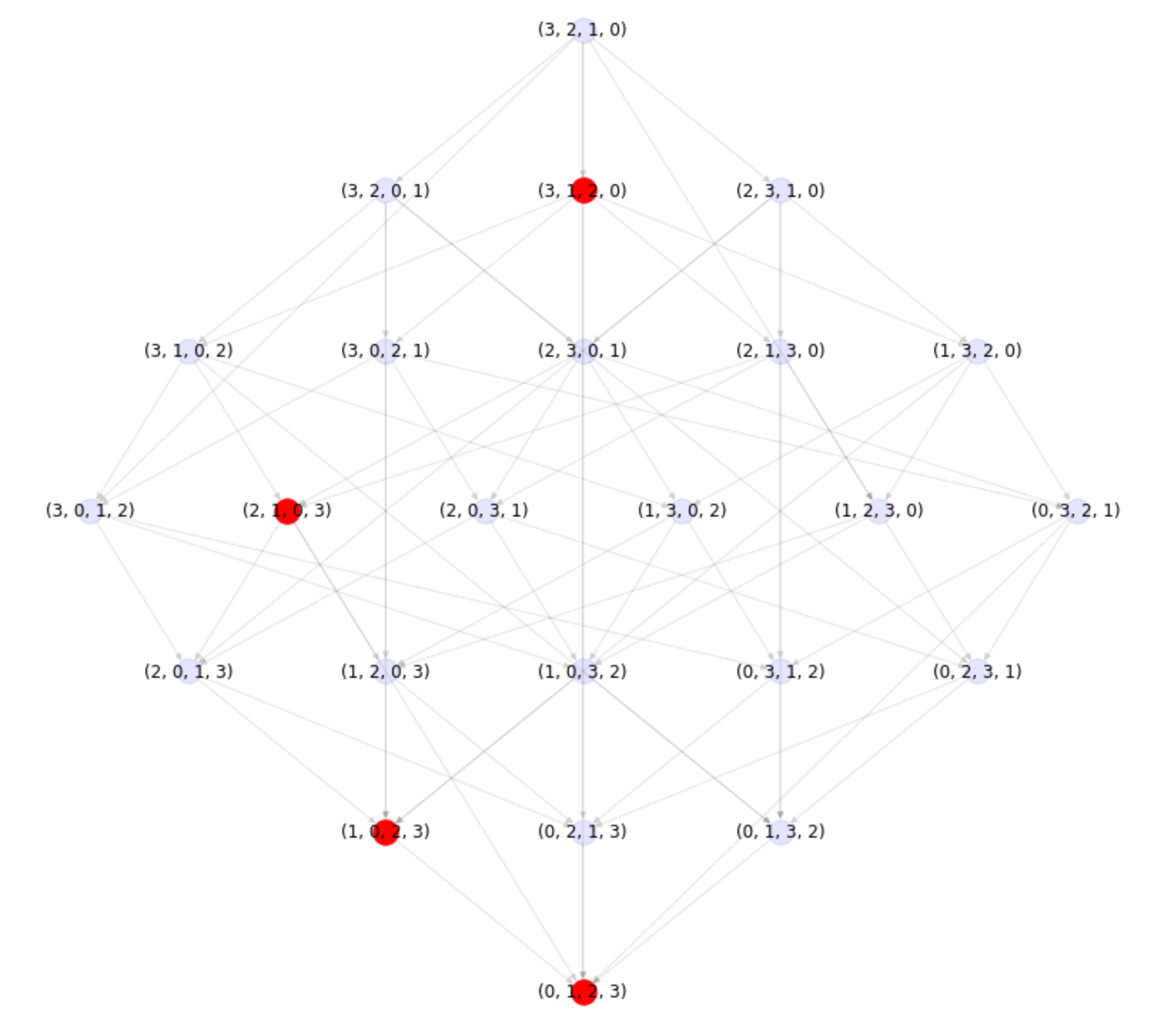}
  \caption{The image of the hypercube map based at $x = 0123$. The
    reader can check that the red dots form a subposet isomorphic to a chain $3 \to 2
  \to 1 \to 0$.}
\end{figure}

\begin{figure}\label{fig:0321}
  \centering
  \includegraphics[scale=0.4]{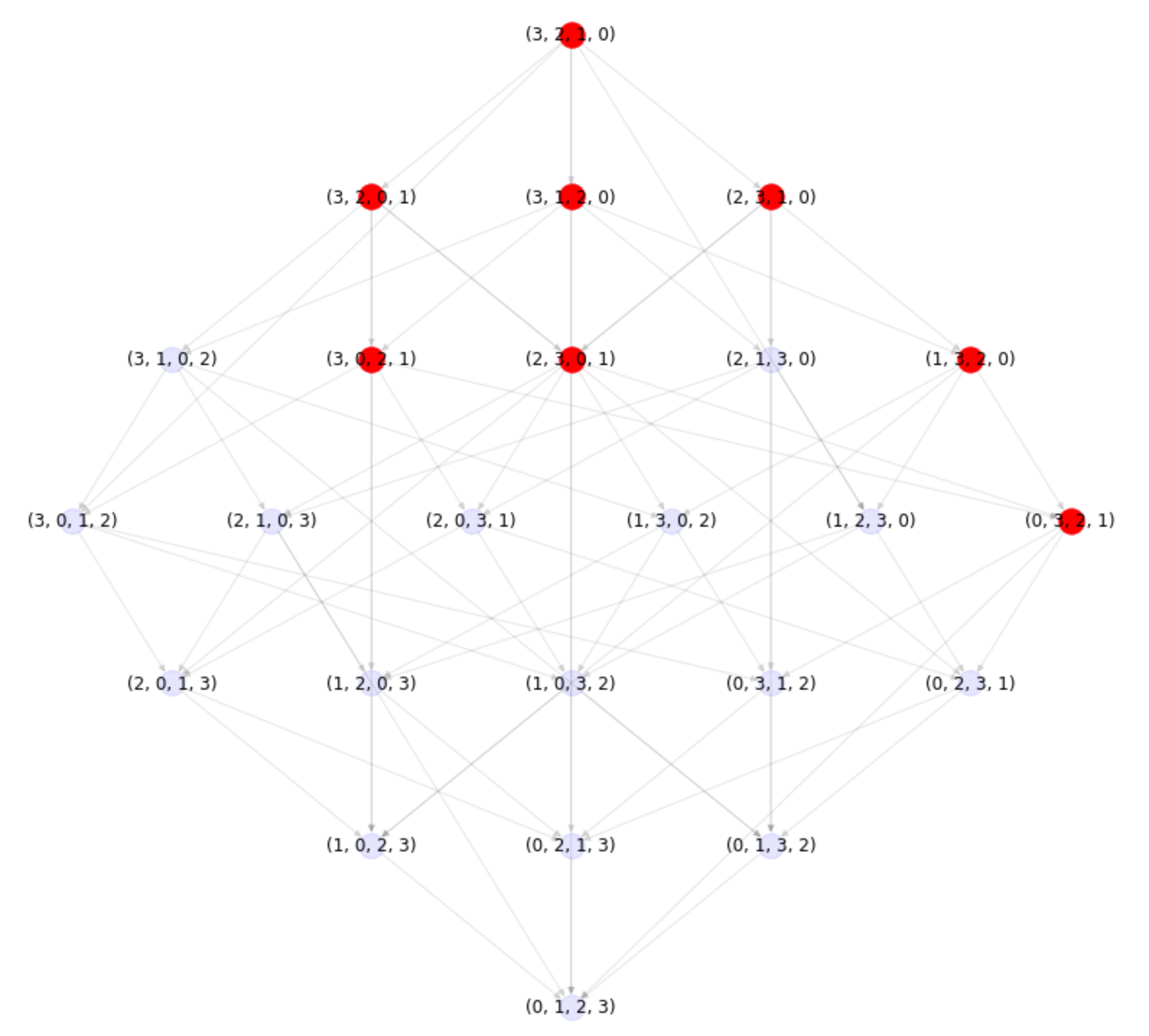}
  \caption{The image of the hypercube map based at $x = 0321$. The reader
  can check that the red dots form subposet isomorphic to the
  1-skeleton of a $\{0,1,2,3\}$-hypercube.}
\end{figure}

\subsection{The hypercube map and Bruhat graph} In this section we
prove that the geometric hypercube map is determined by what it does to
subsequences of $(x(m+1), \dots, x(n))$ of length $0$ and $1$. In
particular, we will see that the edges
\[
  \{ \theta (\{ i \}) \to x \; | \; i \in (x(m+1), \dots, x(n))\}
\]
in the Bruhat graph of the interval $[x,y]$ span a hypercube cluster,
and that the geometrical hypercube map and combinatorial hypercube maps
(defined in \S \ref{sec:hypercube piece}) agree.

In the following, the following standard fact on Bruhat order for the
symmetric group will be indespensible (see e.g. \cite[Lemma
2.1]{Billey-pattern}):

\begin{prop}[``Subword Condition''] Suppose that $u$ and $v$ are
  permutations in $S_{n+1}$ such that $u(i) = v(i)$ except in
  positions $i_1 < i_2 < \dots < i_m$. Consider the permutations $u'$
  (resp. $v'$) of the set $\{ u(i_j) \; | \; 1 \le j \le m\}$ (with
  its natural increasing ordering) given in
  string notation by $(u(i_1),u(i_2), \dots,
  u(i_m))$ (resp. $(v(i_1),v(i_2), \dots, v(i_m))$. Then $u \le v$ if and only if $u' \le v'$.
\end{prop}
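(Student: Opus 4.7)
The plan is to deduce the statement from the \emph{tableau} (or \emph{rank}) \emph{criterion} for Bruhat order on $S_{n+1}$: writing
\[
r_u(p,q) := \#\{i \le q : u(i) \ge p\},
\]
one has $u \le v$ if and only if $r_u(p,q) \le r_v(p,q)$ for every $p,q$. This criterion is essentially a repackaging of the Schubert-variety equations recalled in \S\ref{sec:equations}, since $BuB/B \subset \overline{BvB/B}$ is equivalent to the corresponding rank inequalities applied to the permutation matrix $\dot{u}$. Since $u$ and $v$ agree on all positions outside $\{i_1,\dots,i_m\}$, the contributions from those positions to $r_u$ and $r_v$ are identical, giving
\[
r_u(p,q) - r_v(p,q) = \#\{j : i_j \le q,\ u(i_j) \ge p\} - \#\{j : i_j \le q,\ v(i_j) \ge p\}.
\]

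Next I would introduce the appropriate reindexing. Let $S = \{s_1 < \dots < s_m\}$ denote the common image $\{u(i_j)\} = \{v(i_j)\}$ and identify $S$ with $\{1,\dots,m\}$ via $s_k \mapsto k$; under this identification $u'$ and $v'$ are precisely the permutations of $\{1,\dots,m\}$ appearing in the statement. For each $q$ set $q'(q) := \#\{j : i_j \le q\}$, and for each $p$ set $p'(p) := \min\{k : s_k \ge p\}$ (with convention $m+1$ if no such $k$ exists). A direct unwinding of the definitions yields
\[
\#\{j : i_j \le q,\ u(i_j) \ge p\} = \#\{j \le q'(q) : u'(j) \ge p'(p)\} = r_{u'}(p'(p), q'(q)),
\]
and identically for $v$. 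Combining with the previous display gives
\[
r_u(p,q) - r_v(p,q) = r_{u'}(p'(p), q'(q)) - r_{v'}(p'(p), q'(q)).
\]

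To finish I would verify that as $(p,q)$ ranges over all pairs, $(p'(p), q'(q))$ hits every pair in $\{1,\dots,m+1\}\times\{0,\dots,m\}$, which covers all pairs relevant to the tableau criterion for $u'$ and $v'$ (the boundary pairs $p' = m+1$ or $q' = 0$ give trivial inequalities $0 \le 0$). Consequently, all rank inequalities for $(u,v)$ hold if and only if all rank inequalities for $(u',v')$ hold, proving $u \le v \Leftrightarrow u' \le v'$. The argument is essentially bookkeeping; the only minor obstacle is setting up the reindexing $(p,q) \mapsto (p'(p),q'(q))$ carefully enough to confirm that no rank inequality for $(u,v)$ is missed by passing to $(u',v')$. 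An alternative route via the subword characterization of Bruhat order on reduced expressions is possible but seems more cumbersome than this direct computation.
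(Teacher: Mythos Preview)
Your proof is correct. The paper does not actually prove this proposition: it simply quotes it as a standard fact with the citation ``see e.g.\ \cite[Lemma 2.1]{Billey-pattern}'' and moves on. So there is no ``paper's own proof'' to compare against.

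That said, your argument via the rank (tableau) criterion is exactly the natural one, and it dovetails nicely with the corner--rank description of Schubert cells already recalled in \S\ref{sec:equations}; indeed your $r_u(p,q)$ is precisely $\rank(\dot u^{\le(p,q)})$ in the paper's notation. The reindexing $(p,q)\mapsto(p'(p),q'(q))$ is set up correctly: for the nontrivial range $(p',q')\in\{1,\dots,m\}\times\{1,\dots,m\}$ one hits $q'=j$ by taking $q=i_j$ and $p'=k$ by taking $p=s_k$, while the boundary values $q'=0$ and $p'=m+1$ give vacuous inequalities $0\le 0$, as you note. One tiny cosmetic point: in this paper the permutations act on $\{0,1,\dots,n\}$, so when you write ``$i\le q$'' it is worth making explicit that $q$ ranges over $\{0,\dots,n\}$ (or that you allow $q=-1$ for the empty case), but this does not affect the argument.
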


Recall that $x$ and $y$ are fixed permutations in $S_{n+1}$, and that
  \[
\theta_{\geom} : \left \{ \begin{array}{c}\text{subsequences of }\\\text{$x(m+1), x(m+2), \dots,
  x(n)$}\end{array} \right \} \to [x, w_0]
\]
denotes the geometric hypercube map. The combinatorial characterisation of the hypercube map discussed
above will be a consequence of two simple propositions:

\begin{prop} \label{prop:swapsies}
Suppose that $i$ occurs to
  the left of $j$ in $I$.
  \begin{enumerate}
  \item If $i > j$ then $\theta_\geom(\{i\})$ and $\theta_\geom(\{j\})$ are
    incomparable.
  \item If $i < j$ then $\theta_\geom(\{i\}) < \theta_\geom(\{j\})$ 
  \end{enumerate}
\end{prop}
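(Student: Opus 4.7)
The plan is to combine the explicit description of the hypercube map on singletons (established just above) with the Subword Condition recalled at the start of this section. Writing $m = x^{-1}(0)$, $p_i = x^{-1}(i)$ and $p_j = x^{-1}(j)$, the hypothesis that $i$ occurs to the left of $j$ in the subsequence $(x(m+1),\ldots,x(n))$ translates to $m < p_i < p_j$. By the previous theorem, the permutations $u_i := \theta_{\geom}(\{i\})$ and $u_j := \theta_{\geom}(\{j\})$ are obtained from $x$ by applying the value-transpositions $(0,i)$ and $(0,j)$ respectively. Concretely, $u_i$ differs from $x$ only in positions $m$ and $p_i$: at position $m$ the entry changes from $0$ to $i$, and at position $p_i$ from $i$ to $0$. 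The analogous statement holds for $u_j$.

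In particular $u_i$ and $u_j$ agree with each other (and with $x$) outside of the three positions $\{m, p_i, p_j\}$. I would therefore invoke the Subword Condition at precisely these three positions. Reading off the strings gives
\[
u_i\big|_{(m,\,p_i,\,p_j)} = (i,\,0,\,j), \qquad u_j\big|_{(m,\,p_i,\,p_j)} = (j,\,i,\,0),
\]
so that the comparison of $u_i$ and $u_j$ in Bruhat order on $S_{n+1}$ reduces, via the Subword Condition, to the comparison of these two triples regarded as permutations of the three-element set $\{0, i, j\}$ in its natural increasing order.

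At this stage the problem is a finite check inside $S_3$. If $i < j$, the natural ordering is $0 < i < j$, and the two triples become, respectively, the simple transposition swapping the two smallest letters and the longest element $w_0$; hence $u_i < u_j$, which is statement (2). If $i > j$, the natural ordering is $0 < j < i$, and the two triples both have length two; they become the two distinct length-two elements of $S_3$, which are well known to be Bruhat-incomparable. This gives (1). I do not expect a genuine obstacle: the only mildly delicate bookkeeping is in identifying which position carries which value, which becomes transparent once the explicit description $u_i = (0,i)\cdot x$ (acting on values) is in hand.
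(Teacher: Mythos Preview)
Your proof is correct and follows essentially the same approach as the paper: both reduce via the Subword Condition to the three positions carrying $0$, $i$, $j$, obtain the two triples $(i,0,j)$ and $(j,i,0)$, and finish with a direct length computation in $S_3$. Your write-up is slightly more explicit about tracking positions before invoking the Subword Condition, but the argument is the same.
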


\begin{proof}
The hypercube map
  in question only disturbs $0, i$ and $j$. Hence, by the Subword Condition, 
  we can assume $x = (0, i, j)$ viewed as a
  permutation of $\{ 0, i, j\}$. In both cases we have
  \[ \theta_\geom(\{i\}) = (i,0,j) \quad \text{and} \quad \theta_\geom(\{j\}) = (j,i,0) \]
If $i < j$ then $(j,i,0) <  (i,0,j)$ establishing (2). If $i > j$ then
$(i,0,j)$ and $(j,i,0)$ are distinct and both of length 2, and hence incomparable,
establishing (1).
\end{proof}

\begin{remark}
  We could have also deduced this proposition from the $n = 2$ case of
  Example \ref{ex:extreme}.
\end{remark}

\begin{prop} \label{prop:map hypercube}
  Suppose that $I$ is a subsequence of $(x(m+1), \dots,
  x(n))$ such that $I_{\decr} = I$. (That is, $I_{\decr}$ is decreasing.) Then
  $\theta_\geom$ gives the unique embedding of directed graphs
  \[
H_I \to [x,w_0]
\]
such that $\emptyset \mapsto x$ and $\{ i \} \mapsto
\theta_\geom(\{i\})$ for all $i \in I$.
\end{prop}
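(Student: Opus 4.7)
The plan is to verify the three requirements of the statement --- vertex-injectivity of $\theta_{\geom}|_{2^I}$, preservation of the oriented edges of $H_I$, and uniqueness of the embedding extending the prescribed boundary data --- using throughout the explicit cycle description of $\theta_{\geom}$ from \S\ref{sec:hypercube explicit}. Because $I_{\decr}=I$, every subsequence $J\subseteq I$ is itself decreasing and satisfies $J_{\decr}=J$, so $\theta_{\geom}(J)=\sigma_J x$, where $\sigma_J$ is the $(|J|+1)$-cycle on $\{0\}\cup J$. Vertex-injectivity is then immediate, since the cycles $\sigma_J$ have pairwise distinct supports $\{0\}\cup J$ and hence are distinct permutations.

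For edge preservation, write $J=\{j_1>\cdots>j_k\}$ and set $j_{k+1}:=0$; a short cycle computation gives
\[
\sigma_{J\setminus\{j_r\}}^{-1}\sigma_J = (j_r,j_{r+1}),
\]
a simple transposition. Hence $\theta_{\geom}(J)$ and $\theta_{\geom}(J\setminus\{j_r\})$ differ by a transposition and are joined by an edge of the Bruhat graph. Proposition~\ref{prop:intersections} supplies the downward orientation: the toric stratum indexed by $J\setminus\{j_r\}$ lies in the closure of that indexed by $J$, forcing $\theta_{\geom}(J\setminus\{j_r\})\le\theta_{\geom}(J)$ in Bruhat order.

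The main obstacle is uniqueness, which I would handle by induction on $|J|$; the base case $|J|\le 1$ holds by hypothesis. For the inductive step, suppose $\vartheta$ is another embedding agreeing with $\theta_{\geom}$ on all proper subsets of a size-$k$ set $J$. Every $\theta_{\geom}(J\setminus\{j_r\})$ agrees with $x$ outside the positions $\{0\}\cup J$ (since $\sigma_{J\setminus\{j_r\}}$ is supported there), so the requirement that $\vartheta(J)$ differ from each $\theta_{\geom}(J\setminus\{j_r\})$ by only a single transposition severely restricts where $\vartheta(J)$ can disagree with $x$. Applying the Subword Condition, the problem reduces to the sub-symmetric-group on the positions $\{0\}\cup J$, in which $\sigma_J$ is a Coxeter element and the interval $[\id,\sigma_J]$ is the Boolean lattice $H_J$ (as in Example~\ref{ex:extreme}(2)). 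In this Boolean lattice the top $\sigma_J$ is the unique element whose lower covers are precisely the coatoms $\sigma_{J\setminus\{j_r\}}$, which forces $\vartheta(J)=\sigma_Jx=\theta_{\geom}(J)$ and completes the induction.
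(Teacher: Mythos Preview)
Your argument is correct and follows essentially the same route as the paper's proof: both reduce via the Subword Condition to the sub-symmetric-group on $\{0\}\cup I$, where Example~\ref{ex:extreme}(2) identifies the relevant Bruhat interval with a Boolean lattice, and uniqueness is then immediate. Your version is more explicit---you verify injectivity and edge-preservation by direct cycle computation and invoke Proposition~\ref{prop:intersections} for the orientations, whereas the paper packages all of this into the single observation that $[x,\theta_{\geom}(I)]\cong H_I$---but the substance is the same.

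One soft spot worth tightening: the phrase ``severely restricts where $\vartheta(J)$ can disagree with $x$'' is doing real work and deserves a line of justification. Concretely, if $\vartheta(J)$ disagreed with $x$ at some position $p\notin x^{-1}(\{0\}\cup J)$, then each transposition $t_r$ (with $\vartheta(J)=t_r\,\theta_{\geom}(J\setminus\{j_r\})$) would have to move the value $x(p)$, forcing $t_r=(x(p),\vartheta(J)(p))$ independently of $r$; hence all $\theta_{\geom}(J\setminus\{j_r\})$ would coincide, contradicting injectivity for $|J|\ge 2$. (Note also that $\{0\}\cup J$ is a set of \emph{values}; the relevant positions are $x^{-1}(\{0\}\cup J)$.) The paper's proof glosses over the same point with its terse ``we can work with permutations of $\{0,i_l,\dots,i_1\}$'', so you are in good company, but spelling it out removes any doubt.
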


\begin{proof}
Let $I = (i_1, \dots, i_l)$. By the Subword Condition we can work
with permutations of the (ordered) set $0, i_l, \dots, i_0$, and
assume that
\[
x = (0, i_0, \dots, i_l).
\]
However we have seen in Example \ref{ex:extreme}(2) that the interval
between $\theta_\geom(\emptyset) = x$ and $\theta(I) = (i_0, \dots,
i_l,0)$ is isomorphic to an $I$-hypercube. The proposition follows, as
any morphism between $I$-hypercubes is uniquely
determined by what it does on the base vertex and vertices of height $1$.
\end{proof}

We are now ready to give:

\begin{proof}[Proof of Theorem \ref{thm:geom hypercube}] We first need
  to show that the edges $\{ z \to x \; | \; z \notin L \}$ span a
  hyperube, and that the combinatorial and geometric hypercube maps
  agree. First note that $tx \in L$ if and only if $t(0) \ne 0$, in
  other words that $t = (0, l)$ for some $l$. In order for $tx > x$ we
  must have $x^{-1}(l) > x^{-1}(0)$. Thus the edges incident to $x$ in
  both the combinatorial and geometric hypercube may be identified
  with the set
  \[
E = \{ tx \to x \; | \; t = (0, l)\text{ for some $l \in \{ x(m+1),
  \dots, x(n)\}$, such that $x < tx \le y$.}\} 
\]
It will be convenient to consider
\[ E' = \{ l \in \{ x(m+1),
    \dots, x(n)\} \; | \; x < t_{(0,l)}x \le y\}.
\]
Of course, $E$ and $E'$ are obviously isomorphic, and we let $\phi : E
\to E'$ denote the obvious isomorphism.

The reason to distinguish these two sets is to compare two partial
orders on them. Consider a partial order on $E$ given by $\alpha \le
\beta$ of their are $\le$ in Bruhat
order. On the other hand, say $i \prec j$ in $E'$ if $i$ occurs to the
left of $j$ and $i < j$ (in the usual sense, i.e. as numbers!). Now
Proposition \ref{prop:swapsies} tells us that
\[
\alpha < \beta \Leftrightarrow \phi(\alpha) \prec \phi(\beta).
\]
In other words, $E$ and $E'$ are isomorphic as posets.

Now consider a subset $F$ of $E$. To compute the image of $F$ under
the combinatorial hypercube map we first form the subset $F_{\max}
\subset F$ consisting of maximal elements, and then define
\[
\theta(F) = \text{crown of hypercube spanned by $F_{\max}$.}
\]

On the other hand, consider a subset $I \subset E'$. To compute the
image of $I$ under the geometric hypercube map we first form the
subset $I_{\decr} \subset I$ consisting of a greedy decreasing
subsequence. Thanks to Proposition \ref{prop:map hypercube} we may
describe the image of $I$ as
\[
\theta_{\geom}(I) = \text{crown of hypercube spanned by $\{
  \theta_{\geom}(\{i\}) \to x \; | \; i \in I'\}$} \]
Via the isomorphism $\phi$ the formation of
$F_{\max} \subset F$ and $I_{\decr} \subset I$ correspond to each
other. We conclude that the combinatorial and geometric hypercube maps
agree. It also follows from Proposition \ref{prop:map hypercube}  that
the edges $E$ span a hypercube cluster.

Finally, note that there was nothing special about $x$ in the above
argument. In particular, for any $z \in L$ the set
\[
\{ u \to z \; | \; u \notin L \}
\]
spans a hypercube cluster. In particular, $L \subset [x,y]$ is a
hypercube decomposition, and the theorem is proved.
\end{proof}

\section{Geometry of the conjecture}  \label{sec:conjecture}
Despite considerable effort we are unable to prove Conjecture
\ref{conj:main}. Our conjecture is motivated by Theorem
\ref{thm:main}, and hence it is tempting to imitate its proof. In this
section we discuss the obstacles remaining in doing so.

\subsection{Subvarieties of slices} In this section, we explain the natural
generalizations (for any hypercube decomposition) of the subvarieties $S_{x,y}^H$ and $S_{x,y}^I$ of the
slice $S_{x,y}$ (see \S\ref{sec:slices}).

Recall the coordinate characters $\e_i$ of our torus defined in
\S\ref{sec:hypercube_geometry} and let $R^+ = \{ \e_i - \e_j \; | \; i
< j \}$ denote positive roots with respect to $B$.
Recall that we have a bijection between reflections $t_{(i,j)}$ in
$S_{n+1}$ and \emph{negative} roots $\e_j - \e_i$. We use this to
relabel the edges of our Bruhat graph with negative roots.

As above, let $S_{x,y}$ denote the standard slice through $xB/B$
inside the Schubert variety $\overline{ByB/B.}$ Because we are in type
$A$, we have a $T$-equivariant embedding
\begin{equation}
  \label{eq:embed}
\iota:  S_{x,y} \into T_x (S_{x,y}) = \bigoplus_{u \stackrel{\alpha}{\to} x \in E} \CM_\alpha
\end{equation}
where $T_x$ denotes the tangent space at $x$, $E$ denotes the edges with target $x$ in the Bruhat graph
$[x,y]$, and $\CM_\alpha$ denotes the one-dimensional $T$-module on
which $T$ acts via the character $\alpha$.\footnote{Indeed, any attractive
fixed point $z$ has an affine neighbourhood 
that may be embedded inside the tangent space at $x$, see \cite[Proof
of Theorem 17]{BrionECEIT}. In 
type $A$ it is known \cite{LS-Schubert} that this
tangent space is spanned by its $T$-invariant curves. Hence the
statement.}

Suppose fixed a choice of hypercube decomposition $J
\subset [x,y]$. This breaks the set $E$ into two ``inductive'' and ``hypercube'' pieces $E = E_{\ind}
\sqcup E_{\hyp}$ where
\[
E_{\ind}= \{ u \to x \in E \; | \; u \in J \} \quad \text{and} \quad 
E_{\hyp}= \{ v \to x \in E \; | \; v \notin J\}.
\]
To simplify notation we denote by
\[
V = \bigoplus_{u \stackrel{\alpha}{\to} x \in E} \CM_\alpha
\]
the vector space with $T$-action into which $S_{x,y}$ embeds.  Our decomposition of $E$ leads
to a direct sum decomposition
\[
V = V_\ind \oplus V_\hyp
\]
and it is tempting to consider the subvarieties
\[
S_{x,y}^I := S_{x,y} \cap V_\ind \quad \text{and} \quad S_{x,y}^H :=
S_{x,y} \cap V_\hyp.
\]

Recall that $J = \{ \le z \} \cap [x,y]$ for some $ \in [x,y]$. The
following is the analogue of Proposition \ref{prop:slice_equal}:

\begin{prop} \label{prop:inductive slice general}
  We have $S_{x,y}^I =  S_{x,z}$.
\end{prop}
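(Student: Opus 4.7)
The plan is to verify both inclusions, which are of quite different flavour.

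For $S_{x,z} \subset S_{x,y}^I$: Since $z \le y$ we have $\overline{BzB/B} \subset \overline{ByB/B}$, and intersecting with the slice $S_x$ immediately gives $S_{x,z} \subset S_{x,y}$. To obtain the further containment $S_{x,z} \subset V_{\ind}$, I apply the equivariant embedding recalled in \eqref{eq:embed} to the Schubert variety $\overline{BzB/B}$ itself: the point $x$ is still an attractive $T$-fixed point, and since we are in type $A$ the tangent space at $x$ is spanned by the $T$-invariant curves through $x$. The weights of $T_xS_{x,z}$ are therefore precisely the labels of the edges $u \to x$ in the Bruhat graph with $u \in [x,z]$. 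Since $J = [x,z]$ by the characterisation recalled above, these are exactly the weights of $V_{\ind}$, so $T_xS_{x,z} = V_{\ind}$ inside $V$, and the Brion embedding $S_{x,z} \hookrightarrow V_{\ind}$ coincides with the restriction of $\iota$. This yields $S_{x,z} \subset V_{\ind}$, and combining with the first observation gives $S_{x,z} \subset S_{x,y} \cap V_{\ind} = S_{x,y}^I$.

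For the reverse inclusion $S_{x,y}^I \subset S_{x,z}$, take $p \in S_{x,y} \cap V_{\ind}$. By the rank characterisation of Schubert varieties in \eqref{eq:Schubeq}, I must verify $\rank p^{\le (p',q)} \le \rank z^{\le (p',q)}$ at every position $(p',q)$. At positions where $\rank z^{\le (p',q)} = \rank y^{\le (p',q)}$ this is immediate from $p \in \overline{ByB/B}$; at positions where the inequality is strict, the vanishing of the hypercube coordinates of $p$ must be used to sharpen the bound down to that of $z$.

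The main obstacle is precisely this second case: it requires a combinatorial translation of the hypercube decomposition axioms (diamond-completeness of $J$ and the hypercube cluster property at every vertex of $J$) into a statement about corner rank matrices, namely that any ``excess'' in $\rank y^{\le (p',q)}$ over $\rank z^{\le (p',q)}$ can only be realised through a minor involving an entry whose $T$-weight lies in $V_{\hyp}$. Once this is in place, setting the hypercube coordinates of $p$ to zero pins the corner rank of $p$ down to that of $z$, and $p \in \overline{BzB/B}$ follows. I expect to establish this combinatorial assertion by adapting the explicit description of the hypercube map in \S\ref{sec:hypercube explicit}, together with the subword-condition analysis of \S\ref{sec:hypercube combinatorics} that was used to match $\theta$ with $\theta_{\geom}$.
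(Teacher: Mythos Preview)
Your handling of the inclusion $S_{x,z}\subset S_{x,y}^I$ is fine and matches what the paper calls ``immediate''.

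For the reverse inclusion your route diverges sharply from the paper and, as written, has a real gap. The paper does \emph{not} use corner rank matrices at all. Instead it invokes the Gelfand--Serganova theory of flag polytopes: for $p\in S_{x,y}\cap V_{\ind}$ one looks at the moment polytope $\Poly(p)$, colours its vertices blue (in $J$) or red (not in $J$), and shows that if any red vertex exists then, by walking along edges of $\Poly(p)$ and repeatedly using the classification of two-dimensional flag polytopes together with the diamond-completeness of $J$, one eventually reaches a red vertex adjacent to $x$ by an edge of the polytope. That edge corresponds to a nonzero Pl\"ucker coordinate whose weight lies in $V_{\hyp}$, contradicting $p\in V_{\ind}$. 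The diamond-completeness hypothesis enters in a completely transparent way: it is exactly what rules out the one bad two-dimensional face shape.

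Your proposed corner-rank argument faces two concrete obstacles. First, as the paper notes explicitly in the remark following the proposition, the tangent-space embedding $\iota$ is \emph{not} the matrix embedding of \S\ref{sec:slices}; one passes from the latter to the former by projecting onto the coordinates spanning one-dimensional $T$-orbits. So ``$p\in V_{\ind}$'' does not directly translate into the vanishing of specified matrix entries of $p$, and your minor computation cannot get off the ground without first controlling this projection. Second, the tools you plan to adapt from \S\ref{sec:hypercube explicit} and \S\ref{sec:hypercube combinatorics} were developed for the \emph{specific} decomposition $L=\{z:z^{-1}(0)=x^{-1}(0)\}$, where the relevant coordinates sit in a single column and the corner-rank bookkeeping is tractable. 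For a general hypercube decomposition $J=[x,z]$ there is no such column, no analogue of $\sigma_I$, and no evident way to convert diamond-completeness into a statement about which minors must vanish. The paper's polytope argument sidesteps all of this by never looking at coordinates.
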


The proof is satisfying, and neatly explains the ``diamond complete''
condition in the definition of a hypercube decomposition. We
give it in \S\ref{sec:schnitt}.

The following should be the analogue of Proposition \ref{prop:intersections}
\begin{conj} \label{conj:hyp}
  We have $S_{x,y}^H = V_{\hyp}$. Moreover, each toric stratum
  (corresponding to a subset $I  \subset E_{\hyp}$) is the
  intersection of $S_{x,y}^H$ with a unique Schubert cell $BuB/B$ and
  we have
  \[
\theta(I) = u
\]
where $\theta$ is the hypercube map associated to our
hypercube decomposition $J$.
\end{conj}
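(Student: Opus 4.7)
The plan is to adapt the strategy used in \S\ref{sec:hypercube_geometry} for the special decomposition $J = L$, but without the luxury of the explicit coordinate model $S_x^H \subset S_x \subset G/B$. First, I would try to construct, for each subset $I \subseteq E_\hyp$ of edges with pairwise incomparable sources, a $T$-equivariant closed embedding of the coordinate affine subspace $A_I := \bigoplus_{\alpha \in I} \CM_\alpha \subseteq V$ into $S_{x,y}$, extending the obvious one-dimensional data. The one-dimensional pieces are visible: each edge $\alpha : u \to x$ in $E_\hyp$ gives a $T$-invariant curve $C_\alpha \subset S_{x,y}$ with tangent line $\CM_\alpha$ at $x$, limiting on its other end into $\overline{BuB/B}$. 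The hypercube cluster hypothesis further provides an embedded hypercube $\vartheta_I : H_I \into [x,y]$ whose edges are realized by these curves, so the task is to promote this $1$-skeleton to an actual flat coordinate subspace of the slice.

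Once such embeddings are available, the second step would be to glue them, using that $V_\hyp$ is covered by the $A_I$ over pairwise-incomparable subsets $I \subseteq E_\hyp$ and that their overlaps agree by $T$-equivariance. The identification of each toric stratum's image with $B\theta(I)B/B$ should then follow by a combinatorial argument parallel to the one in \S\ref{sec:hypercube combinatorics}: $T$-equivariance forces the image of the open stratum of $A_I$ to lie in a single $T$-orbit on $G/B$, and matching $T$-characters along the embedded hypercube pins down which cell this is. Surjectivity $S_{x,y}^H = V_\hyp$ (rather than a proper closed subset) would follow because any point of $S_{x,y} \cap V_\hyp$ must lie in some toric stratum and hence in the image, and smoothness of $V_\hyp$ would then upgrade the closed embedding to an equality of varieties.

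The hard part, and the step where any naive implementation of this plan will founder, is the very first one. In the case $J = L$ the decisive input was that $S_x^H$ is globally an affine coordinate subspace of a $T$-stable chart on $G/B$, and that the rank equations cutting out $S_{x,y} \subset S_x$ restrict to $S_x^H$ through the transparent determinantal identity used in \S\ref{sec:hypercube_geometry} (``the determinant is either $0$ or $\pm x_i$''). For an arbitrary hypercube decomposition $J$, no such ambient chart is available: $V_\hyp$ is defined only infinitesimally, via a partition of the edges at $x$ in the Bruhat graph, and there is no direct reason that the Pl\"ucker/rank equations of $\overline{ByB/B}$ should vanish on it. Bridging this gap is, as anticipated in the introduction, expected to reduce to a purity statement or to the surjectivity of a restriction map in equivariant cohomology, and any proof of Conjecture \ref{conj:hyp} is likely to route through such a cohomological input rather than through a direct construction of flat coordinate subspaces.
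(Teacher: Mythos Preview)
The paper does not actually prove this statement: it is left as a conjecture, with the authors remarking only that ``we believe we have a proof of this conjecture. However the argument is unsatisfying and long. We are willing to try to write it down if the obstacle to be discussed in \S\ref{sec:obstacle} can be overcome.'' So there is no proof in the paper to compare your proposal against, and your outline is necessarily speculative.

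That said, your final paragraph misreads the architecture of \S\ref{sec:conjecture}. The purity/surjectivity obstacle of \S\ref{sec:obstacle} is \emph{not} the missing ingredient for Conjecture~\ref{conj:hyp}; rather, \S\ref{sec:obstacle} explicitly \emph{assumes} Conjecture~\ref{conj:hyp} and then identifies purity of $H^\bullet_{T',c}(X - Z, j^!\ic)$ (equivalently, surjectivity of $\beta$, etc.) as the remaining obstruction to the main Conjecture~\ref{conj:main}. The authors evidently regard Conjecture~\ref{conj:hyp} as accessible by direct combinatorial/geometric means---presumably some elaboration of the corner-rank-matrix argument in \S\ref{sec:hypercube explicit}---and the cohomological input as a genuinely separate and harder issue. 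Your expectation that a proof of Conjecture~\ref{conj:hyp} ``is likely to route through such a cohomological input'' therefore inverts the paper's logic.

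There is also a concrete gap in your gluing step. You assert that $V_\hyp$ is covered by the coordinate subspaces $A_I$ as $I$ ranges over subsets of $E_\hyp$ with pairwise incomparable sources. This is false whenever $E_\hyp$ contains comparable edges: if $\alpha,\beta \in E_\hyp$ have comparable sources, then neither $\{\alpha,\beta\}$ nor any antichain containing both is available, so the open toric stratum of $\CM_\alpha \oplus \CM_\beta \subseteq V_\hyp$ is not hit by any $A_I$. The assertion $S_{x,y}^H = V_\hyp$ requires precisely that the defining equations of $\overline{ByB/B}$ vanish on such ``mixed'' strata, and your antichain-by-antichain construction does not address this. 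Any direct argument will have to confront these strata head-on, likely via an explicit analysis of the equations analogous to (but more intricate than) the determinant computation in the proof of Proposition~\ref{prop:intersections}.
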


We believe we have a proof of this conjecture. However the argument is
unsatisfying and long. We are willing to try to write it down if the
obstacle to be discussed in \S \ref{sec:obstacle} can be overcome.

\begin{remark} Two remarks of a hygienic nature:
  \begin{enumerate}
  \item The ($T$-equivariant) embedding $\iota$ inside the tangent space involves a
    choice of ($T$-equivariant) sections
of the quotient map
\[
\mg_x \onto \mg_x/\mg_x^2
\]
where $\mg_x$ denotes the maximal ideal at $x$. Thus, although the
right-hand side of \eqref{eq:embed} is canonical, the embedding
isn't in general. Thus we need to be careful with constructions based
on $\iota$. For example, it is not clear a priori that $S_{x,y}^H$ and
$S_{x,y}^I$ are well-defined. It turns out that they are, as follows
from some considerations in equivariant geometry which we
omit.
\item The embedding $\iota$ is not the same as the embedding of
  $S_{x,y}$ considered in \S\ref{sec:slices}. One can get a choice of
  embedding $\iota$ by composing the embedding considered in
  \S\ref{sec:slices}, with a projection to those coordinates which
  span one-dimensional orbits in $S_{x,y}$. This gives a canonical
  $\iota$ which is useful in examples.
  \end{enumerate}
\end{remark}

\subsection{The main obstacle} \label{sec:obstacle} In this section we
explain the main difficulty in generalizing our proof. We assume that
Conjecture \ref{conj:hyp} is known. To simplify notation, let
\[
X = \PM_\lambda \dot{S}_{x,y}
\]
and let $Z$ and $Z'$ denote closed subvarieties of $X$ given by the
images of $\dot{S}_{x,y}^H$ and $\dot{S}_{x,y}^I$ respectively. We
have the following diagram of inclusions:
\[
  \begin{tikzpicture}[xscale=1.4,yscale=.55]
    \node (ul) at (-2,2) {$X - Z'$};
    \node (ur) at (2,2) {$X - Z$};
    \node (m) at (0,0) {$X$};
    \node (ll) at (-2,-2) {$Z$};
    \node (lr) at (2,-2) {$Z'$};
    \draw[->] (ll) to node[above] {$i$} (m);
    \draw[->] (lr) to node[above] {$i'$} (m);
    \draw[->] (ul) to node[above] {$j$} (m);
    \draw[->] (ur) to node[above] {$j'$} (m);
    \draw[->] (ll) to (ul);
    \draw[->] (lr) to (ur);
  \end{tikzpicture}
\]
As above, we denote by $\ic$ the intersection cohomology
complex on $X$, and by $T' = T/\lambda(\CM^*)$ the torus acting on
$X$.

We have a commutative diagram of cohomology groups:
\begin{equation} \label{eq:big diagram}
\begin{array}{c}  \begin{tikzpicture}[xscale=1.4,yscale=.55]
    \node (ul) at (-2,2) {$H^\bullet_{T',c}(X - Z, j^!\ic)$};
    \node (ur) at (2,2) {$H^\bullet_{T'}(X-Z, j^*\ic)$};
    \node (m) at (0,0) {$H^\bullet_{T'}(X,\ic)$};
    \node (ll) at (-2,-2) {$H^\bullet_{T'}(Z, i^!\ic)$};
    \node (lr) at (2,-2) {$H^\bullet_{T'}(Z', (i')^*\ic)$};
    \draw[->] (ll) to node[above] {$\alpha$} (m);
    \draw[<-] (lr) to node[above] {$\beta$} (m);
    \draw[->] (ul) to (m); 
    \draw[<-] (ur) to (m); 
    \draw[->] (ll) to node[left] {$\xi$}  (ul);
    \draw[<-] (lr) to node[right] {$\zeta$}  (ur);
  \end{tikzpicture} \end{array}
\end{equation}
(Here $H^\bullet_{T',c}(X - Z, j^!\ic) = H^\bullet_{T'}(X, j_!j^!\ic)$
denotes compactly supported cohomology.) We know:
\begin{enumerate}
\item The diagonal
  \[
H^\bullet_{T'}(Z, i^*\ic) \stackrel{\alpha}{\longto} H^\bullet_{T'}(X,\ic) \longto H^\bullet_{T'}(X-Z, j^*\ic)
\]
and anti-diagonal
\[
H^\bullet_{T',c}(X - Z, j^!\ic) \longto H^\bullet_{T'}(X,\ic)  \stackrel{\beta}{\longto} H^\bullet_{T'}(Z', (i')^*\ic)
  \]
 are part of the long exact sequence for the pair $(X, X-Z)$
 (resp. $(X, Z')$).
\item The groups $H^\bullet_{T'}(Z, i^*\ic)$ and $H^\bullet_{T'}(Z',
  (i')^*\ic)$ are pure. (For the first group, this follows from
  Conjecture \ref{conj:hyp}, and for the second this may be assumed by
  induction.)
\end{enumerate}

  It will be useful to recall the magic that happened where our theorem
  holds. In that case $X - Z$ retracts equivariantly onto $Z'$ (using the character
  $\gamma$ in \S\ref{sec:inductive_geometry}) and hence: $\zeta$ is an
  isomorphism, $H^\bullet_{T'}(X-Z, i^*\ic)$ is pure, and 
  \[
H^\bullet_{T'}(Z, i^*\ic) \stackrel{\alpha}{\to} H^\bullet_{T'}(X,\ic) \stackrel{\beta}{\to} H^\bullet_{T'}(Z', (i')^*\ic)
\]
is a short exact sequence. Thus, both $\xi$ and $\zeta$ are
isomorphisms in this case and \eqref{eq:big diagram} collapses to a
single short exact sequence.

We do not know whether $\zeta$ and $\xi$ are isomorphisms in
general. However, if one is, then so is the other one, and our
conjecture holds. Recall that we say that $H_{T'}^\bullet(X, \FS)$ is
equivariantly formal if it is free as a
$H_{T'}^\bullet(\pt,\QM)$-module. The following are easily seen to be
equivalent:
\begin{enumerate}
\item $\beta$ is surjective,
\item $H^\bullet_{T',c}(X - Z, j^!\ic)$ is pure and equivariantly formal.
\end{enumerate}
Similarly, we have equivalences between the following statements:
\begin{enumerate}
\item[(3)] $\alpha$ is split injective (or equivalently, its
  non-equivariant analogue is injective),
\item[(4)] $H^\bullet_{T',c}(X - Z', j^*\ic)$ is pure and equivariantly formal.
\end{enumerate}
It seems that one could use equivariant localization to reduce to rank
2 and deduce equivalences between:
\begin{enumerate}
\item[(5)] $H^\bullet_{T',c}(X - Z, j^!\ic)$ is pure and equivariantly
  formal,
\item[(6)] $\xi$ is an isomorphism.
\end{enumerate}
Similarly, localization techniques should show equivalences between:
\begin{enumerate}
  \item[(7)] $H^\bullet_{T',c}(X - Z', j^*\ic)$ is pure and equivariantly
    formal,
  \item[(8)] $\zeta$ is an isomorphism.
  \end{enumerate}
  Thus it seems likely that (1)--(8) are all equivalent, and any one
  of them implies our conjecture.
  
\subsection{Bruhat polytopes and the inductive piece} \label{sec:schnitt}
In this final section we prove Proposition \ref{prop:inductive slice
  general}. We include this proof because we think it is enlightening,
and explains neatly why the ``diamond complete'' condition in the
definition of a hypercube decomposition arises. Recall that $z$
denotes the crown of the inductive piece, i.e. $J = \{ \le z \}$. We
want to show:

\begin{prop} \label{prop:inductive} $S_{x,y} \cap V_{\ind}= S_{x,z}$.
\end{prop}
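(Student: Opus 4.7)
The proof divides into two inclusions. The containment $S_{x,z} \subseteq S_{x,y} \cap V_\ind$ is the easy direction. Since $z \le y$ we have $\overline{BzB/B} \subseteq \overline{ByB/B}$ and hence $S_{x,z} = S_x \cap \overline{BzB/B} \subseteq S_{x,y}$. The $T$-weights on the Zariski tangent space $T_x S_{x,z}$ label the edges with target $x$ in the Bruhat graph of $[x,z]$; since $J = \{v \in [x,y] \mid v \le z\}$, these weights coincide with those indexing the direct summands of $V_\ind$. Choosing the $T$-equivariant embedding $\iota$ of $S_{x,y}$ into $V$ compatibly with the analogous embedding of $S_{x,z}$ into $V_\ind$, one obtains $S_{x,z} \subseteq V_\ind$.

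For the harder inclusion $S_{x,y} \cap V_\ind \subseteq S_{x,z}$ my plan is to exploit the rank-matrix description from \S\ref{sec:equations}: a matrix $g \in S_x$ lies in $S_{x,z}$ precisely when $\rank(g^{\le(p,q)}) \le \rank(\dot z^{\le(p,q)})$ for every corner $(p,q)$. Pick $g \in S_{x,y} \cap V_\ind$ lying in the Schubert stratum $S_{x,y}^u$; it suffices to show $u \in J$. Suppose not, and fix a maximal chain $x = v_0 \lessdot v_1 \lessdot \cdots \lessdot v_k = u$ in $[x,u]$. Because $v_0 = x \in J$ and $v_k = u \notin J$, there is a smallest index $i$ with $v_i \in J$ and $v_{i+1} \notin J$.

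The key step, and the principal obstacle, is a diamond-propagation lemma asserting that the cover $v_i \lessdot v_{i+1}$ must ``see'' an edge in $E_\hyp$ at $x$. Concretely, starting from the cover $v_i \lessdot v_{i+1}$ with $v_i \in J$ and $v_{i+1} \notin J$, one descends inductively along the chain $v_i \gtrdot v_{i-1} \gtrdot \cdots \gtrdot v_0 = x$: at each step, the hypercube-cluster axiom at the current vertex $v_j$ produces a parallel cover which, together with the cover under consideration, sits in a diamond in $[x,y]$. If this diamond were entirely contained in $J$, diamond completeness would force $v_{i+1} \in J$, a contradiction. Hence at least one edge of the diamond lies outside $J$, and propagating this failure along the chain delivers an edge $w \to x$ in $E_\hyp$. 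The corresponding coordinate of $g$ is non-zero because $g$ lies in the Schubert stratum $S_{x,y}^u$ that the hypercube map sends to a subset containing $w$, contradicting $g \in V_\ind$.

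The main obstacle is to make this diamond-propagation watertight: one must produce, at every intermediate step $v_j$, a genuine diamond in $[x,y]$ whose presence-in-$J$ would be forced by diamond completeness, and to verify that the failure of membership descends coherently. The hypercube-cluster axiom at each $v_j$ --- which forces the relevant pairs of incoming non-$J$ edges into hypercube configurations --- should supply exactly the diamonds required, while diamond completeness translates their combinatorics into the non-vanishing of a $V_\hyp$-coordinate. This joint use of the two axioms of a hypercube decomposition explains why both appear in the definition, and in particular why diamond completeness is the correct combinatorial hypothesis for the theorem to hold.
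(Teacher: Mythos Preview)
Your proposal has a genuine gap that goes beyond the incompleteness of the diamond-propagation lemma. The core problem is that your argument is purely combinatorial --- it uses only the Schubert stratum $u$ containing $g$ --- and never engages with the coordinates of the specific point $g$. Even if your propagation delivered an edge $w \to x$ in $E_\hyp$, there is no reason the $w$-coordinate of $\iota(g)$ should be nonzero: many points lie in the same stratum $S_{x,y}^u$ with widely varying support in $V$. Your justification (``the hypercube map sends $u$ to a subset containing $w$'') conflates the behaviour of points in the special linear subspace $S_{x,y}^H$ with that of an arbitrary point of the Schubert cell $BuB/B$. Indeed, the existence of \emph{some} edge in $E_\hyp$ is automatic whenever $z \ne y$, so a combinatorial argument that only produces such an edge cannot possibly yield the contradiction.

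The paper's proof circumvents this by working with geometric data attached to the point $p$ itself: the Gelfand--Serganova polytope $\Poly(p)$, whose vertices are exactly those $w \in W$ for which the $w$-Pl\"ucker coordinate of $p$ is nonzero. The descent is then performed along edges and two-dimensional faces of $\Poly(p)$, using the classification of $2$-dimensional flag polytopes together with diamond completeness (the hypercube-cluster axiom is not invoked at all). One shows that if $\Poly(p)$ has any vertex outside $J$, then it has such a vertex joined to $x$ by an edge of the polytope; the corresponding Pl\"ucker coordinate being nonzero then genuinely forces $p \notin V_\ind$. The moral is that the propagation must take place inside an object that records the coordinates of $p$, not inside an abstract maximal chain of $[x,u]$.
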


We will need some results from the beautiful paper \cite{GS} before turning to the proof.
Let us fix throughout a regular dominant weight $\lambda$. In the
following we will identify $W$ with its image in the $\lambda$ orbit,
i.e. $x$ is identified with $x(\lambda) \in \hg^*$, where $\hg^*$
denotes the realification of the character lattice of $T$. We also consider
$G/B$ as embedded by the Pl\"ucker embedding:
\[
\Plucker : G/B \into \PM(V_\lambda).
  \]

Take a
point $p \in G/B$. Consider its $T$-orbit closure
$\overline{T \cdot p}$. Gelfand and Seganova prove the following:
\begin{enumerate}
\item The set $\{ u \; | \; u \in
  (\overline{T \cdot p})^{T} \} \subset \hg^*$ are the vertices of a convex
  polytope $\Poly(p) \subset \hg^*$.
\item $\Poly(p)$ agrees with the moment map image of $\overline{T
    \cdot p}$;
\item $x \in \Poly(p)$ if and only if the $x$-coordinate of the
  Pl\"ucker coordinate is non-zero. (The $x(\lambda)$-weight
  space in $V_\lambda$ is one-dimensional, so this statement makes
  sense.)
\end{enumerate}
Let us call a polytope that occurs in this way (with $\lambda$ fixed
as always) a \emph{flag
  polytope}. The theory of toric varieties also produces the important
fact (also proved in \cite{GS}) that:
\begin{equation}
  \label{eq:faces}
  \text{Any face of a flag polytope is a flag polytope.}
\end{equation}
From \eqref{eq:faces} we deduce (by considering one-dimensional faces) that:
\begin{equation}
  \label{eq:edges}
  \text{Any edge of $\Poly(p)$ is parallel to a root.}
\end{equation}
We also need the classification of 2-dimensional
flag polytopes. They are the following, together with their horizontal
flips:
\begin{align*}
\begin{array}{c}
  \begin{tikzpicture}[scale=0.45]
    \node (t) at (90:2) {$\bullet$};
    \node (l) at (180:2) {$\bullet$};
    \node (r) at (0:2) {$\bullet$};
    \node (b) at (-90:2) {$\bullet$};
  \draw[->] (t) to (l); 
  \draw[->] (t) to (r); 
  \draw[->] (l) to (b);
  \draw[->] (r) to (b);
\end{tikzpicture}
\end{array}, \;
\begin{array}{c}
  \begin{tikzpicture}[scale=0.5]
    \node (t) at (90:2) {$\bullet$};
    \node (lt) at (150:2) {$\bullet$};
    \node (lb) at (-150:2) {$\bullet$};
        \node (b) at (-90:2) {$\bullet$};
  \draw[->] (t) to (lt); 
  \draw[->] (lt) to (lb); 
  \draw[->] (lb) to (b);
  \draw[->] (t) to (b);
\end{tikzpicture}
\end{array}, \;
    \begin{array}{c}
  \begin{tikzpicture}[scale=0.5]
    \node (t) at (90:2) {$\bullet$};
    \node (lt) at (150:2) {$\bullet$};
    \node (lb) at (-150:2) {$\bullet$};
    \node (rt) at (30:2) {$\bullet$};
  \draw[->] (t) to (lt); \draw[->] (t) to (rt);
  \draw[->] (lt) to (lb); 
  \draw[->] (rt) to (lb);
\end{tikzpicture}
    \end{array}, \;
    \begin{array}{c}
  \begin{tikzpicture}[scale=0.5]
    \node (lb) at (-150:2) {$\bullet$};
    \node (rt) at (30:2) {$\bullet$};
    \node (rb) at (-30:2) {$\bullet$};
        \node (b) at (-90:2) {$\bullet$};
  \draw[->] (rt) to (lb); \draw[->] (rt) to (rb);
  \draw[->] (lb) to (b); \draw[->] (rb) to (b);
\end{tikzpicture}
                \end{array}, \;
  \begin{array}{c}
  \begin{tikzpicture}[scale=0.5]
    \node (t) at (90:2) {$\bullet$};
    \node (lt) at (150:2) {$\bullet$};
    \node (lb) at (-150:2) {$\bullet$};
    \node (rt) at (30:2) {$\bullet$};
    \node (rb) at (-30:2) {$\bullet$};
        \node (b) at (-90:2) {$\bullet$};
  \draw[->] (t) to (lt); \draw[->] (t) to (rt);
  \draw[->] (lt) to (lb); \draw[->,gray!50!white] (lt) to (rb);
  \draw[->,gray!50!white] (rt) to (lb); \draw[->] (rt) to (rb);
  \draw[->] (lb) to (b); \draw[->] (rb) to (b);
  \draw[->,gray!50!white] (t) to (b);
\end{tikzpicture}
                \end{array}.
\end{align*}
(The gray edges in the last diagram indicated edges in the Bruhat
graph which are not edges of the polytope.) Note that these pictures
represent conformal equivalence classes of polytopes: we can stretch
edge lengths, but angles must be preserved. For example, there is no
need for the hexagon above to be regular, but all angles
between edges (including gray edges) must be as displayed.

\begin{proof}[Proof of Proposition \ref{prop:inductive}.]
  The inclusion $S_{x,y} \cap I \supset S_{x,z}$ is immediate. The tricky
bit is to establish that $S_{x,y} \cap I \subset S_{x,z}$.

  Choose $p \in S_{x,y} \cap I$. It is enough to show that if $u \in
  \Poly(p)$ then $u \le z$. (Note $\lim_{z \to \infty} \lambda(z)
  \cdot p = u'$ for some $u' \in \Poly(p)$, where $\lambda$ is our
  anti-dominant cocharacter as always. Thus $p$ belongs to
  $Bu'B/B \cap U^{-}xB/B$ which is contained in $S_{x,z}$ if $u' \le
  z$.)

  Consider the vertices of $\Poly(p)$. We can divide these into those
  edges contained in inductive piece (which we will call \emph{blue}
  vertices) and those which are not (which we will call \emph{red}
  vertices). We will show the contrapositive to the statement of the
  last paragraph: namely we will show that if the set of red vertices
  in $\Poly(p)$ is non-zero, then $p \notin S_{x,y} \cap I$.

  So let us assume that red vertices exist.  We can certainly find a
  red vertex $u$ which is connected to a blue  vertex $v$ by an edge
  in $\Poly(p)$. Note that the edges in $\Poly(p)$ inherit a direction
  from the Bruhat graph (equivalently, from $\lambda$). Because $v
  \to v'$ implies that $v'$ is also blue, we know that the orientation
  of the edge joining $u$ to $v$ has to be $u \to v$.

  If $v = x$, then the $u$-coordinate of $\Plucker(p)$ is non-zero,
  and hence $p \notin S_{x,y} \cap I$ and we are done. Otherwise,
  there exists an edge $v \to w$ in the polytope. Now consider the
  2-dimensional face $F$ of $\Poly(p)$ containing $u, v$ and
  $v'$. From our knowledge of the 2-dimensional flag polytopes, and
  forgetting conformal structure (i.e. just remembering isomorphism
  type of directed graph) we see that there are 5 possibilities:
  \begin{align*}
\begin{array}{c}
  \begin{tikzpicture}[scale=0.45]
    \node[red] (t) at (90:2) {$u$};
    \node[blue] (l) at (180:2) {$v$};
    \node (r) at (0:2) {$\bullet$};
    \node[blue] (b) at (-90:2) {$w$};
  \draw[->] (t) to (l); 
  \draw[->] (t) to (r); 
  \draw[->] (l) to (b);
  \draw[->] (r) to (b);
\end{tikzpicture}
\end{array}, \;
\begin{array}{c}
  \begin{tikzpicture}[scale=0.5]
    \node[red] (t) at (90:2) {$u$};
    \node[blue] (lt) at (150:2) {$v$};
    \node[blue] (lb) at (-150:2) {$w$};
        \node (b) at (-90:2) {$\bullet$};
  \draw[->] (t) to (lt); 
  \draw[->] (lt) to (lb); 
  \draw[->] (lb) to (b);
  \draw[->] (t) to (b);
\end{tikzpicture}
\end{array}, \;
\begin{array}{c}
  \begin{tikzpicture}[scale=0.5]
    \node (t) at (90:2) {$\bullet$};
    \node[red] (lt) at (150:2) {$u$};
    \node[blue] (lb) at (-150:2) {$v$};
        \node[blue] (b) at (-90:2) {$w$};
  \draw[->] (t) to (lt); 
  \draw[->] (lt) to (lb); 
  \draw[->] (lb) to (b);
  \draw[->] (t) to (b);
\end{tikzpicture}
\end{array}, \;
    \begin{array}{c}
  \begin{tikzpicture}[scale=0.5]
    \node (t) at (90:2) {$\bullet$};
    \node[red] (lt) at (150:2) {$u$};
    \node[blue] (lb) at (-150:2) {$v$};
    \node (rt) at (30:2) {$\bullet$};
    \node (rb) at (-30:2) {$\bullet$};
        \node[blue] (b) at (-90:2) {$w$};
  \draw[->] (t) to (lt); \draw[->] (t) to (rt);
  \draw[->] (lt) to (lb); \draw[->,gray!50!white] (lt) to (rb);
  \draw[->,gray!50!white] (rt) to (lb); \draw[->] (rt) to (rb);
  \draw[->] (lb) to (b); \draw[->] (rb) to (b);
  \draw[->,gray!50!white] (t) to (b);
\end{tikzpicture}
                \end{array}, \;
    \begin{array}{c}
  \begin{tikzpicture}[scale=0.5]
    \node[red] (t) at (90:2) {$u$};
    \node[blue] (lt) at (150:2) {$v$};
    \node[blue] (lb) at (-150:2) {$w$};
    \node (rt) at (30:2) {$\bullet$};
    \node (rb) at (-30:2) {$\bullet$};
        \node (b) at (-90:2) {$\bullet$};
  \draw[->] (t) to (lt); \draw[->] (t) to (rt);
  \draw[->] (lt) to (lb); \draw[->,gray!50!white] (lt) to (rb);
  \draw[->,gray!50!white] (rt) to (lb); \draw[->] (rt) to (rb);
  \draw[->] (lb) to (b); \draw[->] (rb) to (b);
  \draw[->,gray!50!white] (t) to (b);
\end{tikzpicture}
                \end{array}.
\end{align*}
We claim the last possibility is impossible. Indeed, because the blue
nodes are closed under the Bruhat order, the following nodes must be blue:
\begin{align*}
    \begin{array}{c}
  \begin{tikzpicture}[scale=0.5]
    \node[red] (t) at (90:2) {$u$};
    \node[blue] (lt) at (150:2) {$v$};
    \node[blue] (lb) at (-150:2) {$w$};
    \node (rt) at (30:2) {$\bullet$};
    \node[blue] (rb) at (-30:2) {$\bullet$};
        \node[blue] (b) at (-90:2) {$\bullet$};
  \draw[->] (t) to (lt); \draw[->] (t) to (rt);
  \draw[->] (lt) to (lb); \draw[->,gray!50!white] (lt) to (rb);
  \draw[->,gray!50!white] (rt) to (lb); \draw[->] (rt) to (rb);
  \draw[->] (lb) to (b); \draw[->] (rb) to (b);
  \draw[->,gray!50!white] (t) to (b);
\end{tikzpicture}
                \end{array}.
\end{align*}
  Now diamond completeness forces all six nodes to be blue, which is a
  contradiction.

  In the remaining four cases, consider the green edge below:
  \begin{align*}
\begin{array}{c}
  \begin{tikzpicture}[scale=0.45]
    \node[red] (t) at (90:2) {$u$};
    \node[blue] (l) at (180:2) {$v$};
    \node (r) at (0:2) {$\bullet$};
    \node[blue] (b) at (-90:2) {$w$};
  \draw[->] (t) to (l); 
  \draw[->] (t) to (r); 
  \draw[->] (l) to (b);
  \draw[green,->] (r) to (b);
\end{tikzpicture}
\end{array}, \;
\begin{array}{c}
  \begin{tikzpicture}[scale=0.5]
    \node[red] (t) at (90:2) {$u$};
    \node[blue] (lt) at (150:2) {$v$};
    \node[blue] (lb) at (-150:2) {$w$};
        \node (b) at (-90:2) {$\bullet$};
  \draw[->] (t) to (lt); 
  \draw[->] (lt) to (lb); 
  \draw[->] (lb) to (b);
  \draw[green,->] (t) to (b);
\end{tikzpicture}
\end{array}, \;
\begin{array}{c}
  \begin{tikzpicture}[scale=0.5]
    \node (t) at (90:2) {$\bullet$};
    \node[red] (lt) at (150:2) {$u$};
    \node[blue] (lb) at (-150:2) {$v$};
        \node[blue] (b) at (-90:2) {$w$};
  \draw[->] (t) to (lt); 
  \draw[->] (lt) to (lb); 
  \draw[->] (lb) to (b);
  \draw[green,->] (t) to (b);
\end{tikzpicture}
\end{array}, \;
    \begin{array}{c}
  \begin{tikzpicture}[scale=0.5]
    \node (t) at (90:2) {$\bullet$};
    \node[red] (lt) at (150:2) {$u$};
    \node[blue] (lb) at (-150:2) {$v$};
    \node (rt) at (30:2) {$\bullet$};
    \node (rb) at (-30:2) {$\bullet$};
        \node[blue] (b) at (-90:2) {$w$};
  \draw[->] (t) to (lt); \draw[->] (t) to (rt);
  \draw[->] (lt) to (lb); \draw[->,gray!50!white] (lt) to (rb);
  \draw[->,gray!50!white] (rt) to (lb); \draw[->] (rt) to (rb);
  \draw[->] (lb) to (b); \draw[green,->] (rb) to (b);
  \draw[->,gray!50!white] (t) to (b);
\end{tikzpicture}
                \end{array}.
  \end{align*}
  We claim that the source of each green arrow is red: in the first
  and last cases this follows from diamond completeness of the blue
  vertices; in the second case this is clear; in the third case this
  follows because the blue nodes are closed under Bruhat order.

  Now the green arrow gives us a new edge $u' \to v'$ of our polytope,
  with $u'$ blue, $v'$ red, and $v' < v$. Continuing in this way we
  eventually produce an edge $u'' \to x$ with $u''$ red, and we deduce
  that $p \notin S_{x,y} \cap I$ as above (i.e. because then the $u''$-Pl\"ucker
  coordinate is non-zero), and we are done.
\end{proof}


\newcommand{\etalchar}[1]{$^{#1}$}
\def\cprime{$'$} \def\cprime{$'$} \def\cprime{$'$}

\end{document}